\def\Q{\mathbf{Q}}\def\R{\mathbf{R}}\def\Z{\mathbf{Z}}
\def\qb{\overline{\Q}}
\DeclareMathOperator{\Hom}{Hom}
\DeclareMathOperator{\Ker}{Ker}
\DeclareMathOperator{\GL}{GL}
\DeclareMathOperator{\spec}{Spec}
\DeclareMathOperator{\card}{card}
\DeclareMathOperator{\vol}{vol}
\DeclareMathOperator{\vect}{vect}
\DeclareMathOperator{\ppcm}{ppcm}
\theoremstyle{plain}
\newtheorem{theo}{Th\'eor\`eme}[section]
\newtheorem{lemma}[theo]{Lemme}
\newtheorem{prop}[theo]{Proposition}
\newtheorem{coro}[theo]{Corollaire}
\theoremstyle{definition}
\newtheorem{defi}[theo]{D\'efinition}
\newtheorem{rema}[theo]{Remarque}
\title{Minima, pentes et alg\`ebre tensorielle}
\author{\'Eric Gaudron \& Ga\"el R\'emond}
\date{\today}
\begin{document}

\selectlanguage{francais}

\footnotetext{MSC~$2010$: 11G50 (14G40, 11E12, 05A10).}
\footnotetext{\textbf{Mots-clefs} : Fibr\'e ad\'elique hermitien,
pente d'Arakelov, pente maximale, minimum absolu,
produit tensoriel hermitien, puissance sym\'etrique,
puissance ext{\'e}rieure, lemme de Siegel absolu de Zhang,
th\'eor\`eme de Minkowski-Hlawka absolu, ppcm de multinomiaux.}
\begin{abstract}
Nous {\'e}tudions les relations entre la pente maximale d'un fibr{\'e} 
ad{\'e}lique hermitien, issue de la th{\'e}orie des pentes de Bost, et le 
minimum absolu de ce fibr\'e. En particulier,
nous {\'e}tablissons un th{\'e}or{\`e}me de Minkowski-Hlawka absolu
et nous montrons que le minimum absolu n'est pas multiplicatif par
produit tensoriel.
De plus nous montrons comment un lemme de Siegel d\^u \`a Zhang
permet de majorer la pente maximale d'un produit tensoriel de fibr\'es
ad\'eliques hermitiens et d'am\'eliorer ainsi un th\'eor\`eme
obtenu r\'ecemment par Chen. Nous en d{\'e}duisons des cons{\'e}quences
pour les puissances sym{\'e}triques et ext{\'e}rieures de tels fibr{\'e}s,
dont l'une fait intervenir le ppcm des coefficients multinomiaux, que nous
calculons.
\\[.2cm] \textsc{Abstract}.
Slopes of an adelic vector bundle exhibit a behaviour akin to
successive minima. Comparisons between the two amount to a Siegel
lemma. Here we use Zhang's version for absolute minima over the
algebraic numbers. We prove a Minkowski-Hlawka theorem in this
context. We also study the tensor product of two hermitian bundles 
bounding both its absolute minimum and maximal slope, thus
improving an estimate of Chen. We further include similar inequalities
for exterior and symmetric powers, in terms of some lcm of multinomial
coefficients.
\end{abstract}
\maketitle
\textbf{Coordonn\'ees des auteurs~:}
\vspace{0,3cm}

\noindent \'Eric \textsc{Gaudron}\\ \ding{41} Universit\'e
Grenoble I, Institut Fourier.\\ UMR $5582$, BP $74$\\ $38402$
Saint-Martin-d'H\`eres Cedex, France.\\
Courriel~: \texttt{Eric.Gaudron@ujf-grenoble.fr}\\
\ding{37} (33) 04 76 51 45 72\\
Page internet~: \texttt{http$:$//www-fourier.ujf-grenoble.fr/$\sim$gaudron}
\vspace{0,3cm}

\noindent Ga{\"e}l \textsc{R\'emond}\\ \ding{41} Universit\'e
Grenoble I, Institut Fourier.\\ UMR $5582$, BP $74$\\ $38402$
Saint-Martin-d'H\`eres Cedex, France.\\
Courriel~: \texttt{Gael.Remond@ujf-grenoble.fr}\\
\ding{37} (33) 04 76 51 49 86
\newpage
\section{Introduction}

Cet article \'etudie deux quantit\'es associ\'ees \`a un
fibr\'e ad\'elique hermitien : son
minimum absolu et sa pente maximale. Nous donnons plusieurs
in\'egalit\'es faisant intervenir ces deux nombres, en particulier
pour estimer leur comportement relativement au produit tensoriel des
fibr\'es et, subs\'equemment, aux puissances sym\'etriques
et ext\'erieures.

La notion de fibr\'e ad\'elique hermitien g\'en\'eralise \`a un corps
de nombres celle de 
r\'eseau euclidien avec laquelle elle co{\"\i}ncide sur $\Q$. Alors
qu'un r\'eseau euclidien est constitu\'e d'un $\Z$-module libre de
rang fini $\Omega$ et d'une norme euclidienne sur l'espace vectoriel
r\'eel $\Omega\otimes\R$, nous voyons plut\^ot un fibr\'e ad\'elique
hermitien sur un corps de nombres $k$ 
comme un couple form\'e d'un espace vectoriel $E$ sur $k$ et d'une
collection de normes $\|\cdot\|_v$ sur $E$ pour toute place $v$ de $k$
(la d\'efinition pr\'ecise est rappel\'ee dans la partie suivante) :
dans le cas d'un r\'eseau euclidien $E=\Omega\otimes\Q$, la norme
$\|\cdot\|_\infty$ est celle donn\'ee sur $\Omega\otimes\R$ tandis
que les normes aux places finies d\'efinissent $\Omega$ dans $E$.

On associe classiquement \`a un r\'eseau euclidien son (premier)
minimum : la plus petite norme d'un \'el\'ement non nul de
$\Omega$. Traduite dans le langage des fibr\'es ad\'eliques
hermitiens, cette d\'efinition 
devient la plus petite hauteur d'un \'el\'ement non nul de $E$ (la
hauteur s'obtient en faisant le produit des normes, voir
\S~\ref{defh}) mais nous souhaitons consid\'erer ici le minimum {\em
absolu} de $E$ c'est-\`a-dire l'infimum des hauteurs des \'el\'ements
non nuls de $E\otimes\qb$ : nous le notons $\Lambda(\overline{E},\qb)$.

Nous utilisons ensuite la pente d'un fibr\'e ad\'elique
hermitien. Dans le cas d'un r\'eseau 
euclidien, celle-ci est un avatar du covolume : si
$\overline{E}=(\Omega\otimes\Q,\|\cdot\|)$ est d\'eduit de $\Omega,\|\cdot\|$
alors $$\widehat{\mu}(\overline{E})=-\frac{1}{\dim
E}\log\vol(\Omega\otimes\R/\Omega)$$ o\`u le volume d\'esigne la
mesure de Haar sur $\Omega\otimes\R$ associ\'ee \`a la norme
$\|\cdot\|$ (c'est-\`a-dire la mesure de Lebesgue si l'on identifie
$\Omega\otimes\R$ \`a $\R^n$ {\em via} une base orthonorm\'ee pour
$\|\cdot\|$). Cette d\'efinition s'\'etend \`a un corps de nombres et
nous nous concentrons sur la pente maximale
$\widehat{\mu}_{\mathrm{max}}(\overline{E})$ de $\overline{E}$, c'est-\`a-dire le
maximum des pentes des sous-fibr\'es non nuls de $\overline{E}$.

Tandis qu'une comparaison entre le minimum et le covolume d'un
r\'eseau euclidien fait l'objet du premier th\'eor\`eme de Minkowski
(appel\'e aussi lemme de Siegel dans certains contextes), une
comparaison analogue pour le minimum absolu r\'esulte d'un
th\'eor\`eme profond de Zhang, qui joue un r\^ole crucial dans ce
texte. Nous citons ici une forme l\'eg\`erement simplifi\'ee (voir la
partie~\ref{Pzh} pour une discussion plus compl\`ete).

\begin{theo}\label{Tzhi} Pour tout fibr\'e ad\'elique hermitien
$\overline{E}$ de dimension $n\ge1$ sur un corps de nombres nous
avons $$1\le\Lambda(\overline{E},\qb)e^{\widehat{\mu}_{\mathrm{max}}
(\overline{E})}\le\sqrt n.$$\end{theo}

Ici la minoration r\'esulte facilement des d\'efinitions, le contenu
du th\'eor\`eme r\'eside dans la majoration qui m\'erite le nom de
th\'eor\`eme de Minkowski absolu ou de lemme de Siegel absolu (suivant
la terminologie de Roy-Thunder~\cite{rt}).

Nous montrons dans un premier temps que l'ordre de grandeur de cette
majoration ne peut pas \^etre am\'elior\'e. Un r\'esultat analogue sur
$\Q$ portant classiquement le nom de th\'eor\`eme de Minkowski-Hlawka,
notre premier r\'esultat correspond donc \`a un th\'eor\`eme de
Minkowski-Hlawka absolu.

\begin{theo}\label{thlawka} Pour tout $n\ge1$ il existe un fibr\'e ad\'elique
hermitien $\overline{E}$ de dimension $n$ 
sur un corps de nombres avec
$$\Lambda(\overline{E},\qb)e^{\widehat{\mu}_{\mathrm{max}}(\overline{E})}\ge
\sqrt{\frac{n}{e}}.$$\end{theo}

Il s'agit ici encore d'une forme simplifi\'ee (voir
th\'eor\`eme~\ref{mha}) et nous donnerons un proc\'ed\'e explicite
pour trouver un tel $\overline{E}$.

Tournons-nous \`a pr\'esent vers le comportement vis-\`a-vis du
produit tensoriel. En utilisant le th\'eor\`eme~\ref{Tzhi} (de
mani\`ere indirecte) nous d\'emontrons l'in\'egalit\'e centrale
suivante.

\begin{theo}\label{prin} Pour deux fibr\'es ad\'eliques hermitiens
$\overline{E}$ 
et $\overline{F}$ sur un corps de nombres, on a 
$$\Lambda(\overline{E},\qb)e^{-\widehat{\mu}_{\mathrm{max}}(\overline{F})}\le
\Lambda(\overline{E}\otimes\overline{F},\qb).$$\end{theo}

Cet \'enonc\'e s'\'etend imm\'ediatement par r\'ecurrence \`a $N$
fibr\'es puis en le combinant (de mani\`ere directe cette fois) avec
le th\'eor\`eme~\ref{Tzhi} nous aboutissons aux estimations suivantes.

\begin{coro}\label{cor} Soient $N\ge1$ un entier naturel et
$\overline{E}_1,\ldots,\overline{E}_N$ des
fibr\'es ad\'eliques hermitiens sur un corps de nombres. Nous
avons\begin{equation}
\label{lamb}1\le\Lambda\left(\bigotimes_{i=1}^N\overline{E}_i,\qb\right)^{-1}
\prod_{i=1}^N\Lambda(\overline{E}_i,\qb)
\le\left(\prod_{i=2}^N\dim E_i\right)^{1/2}\ \text{et}\end{equation}
\begin{equation}\label{mum}0\le\widehat{\mu}_{\mathrm{max}}
\left(\bigotimes_{i=1}^N\overline{E}_{i}\right)-\sum_{i=1}^N
\widehat{\mu}_{\mathrm{max}}(\overline{E}_{i})\le\frac{1}{2}\sum_{i=1}^N
\log\dim E_i.\end{equation}\end{coro}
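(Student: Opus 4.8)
Le plan est de ne pas d\'emontrer l'in\'egalit\'e \`a $N$ facteurs par r\'ecurrence \`a partir du cas $N=2$ (ce qui d\'egraderait les constantes), mais d'it\'erer le th\'eor\`eme~\ref{prin} pour minorer $\Lambda(\bigotimes_i\overline{E}_i,\qb)$, puis de n'invoquer le th\'eor\`eme~\ref{Tzhi} qu'une seule fois, tout \`a la fin, sur le produit tensoriel complet. C'est pr\'ecis\'ement cet ordre des op\'erations qui \'evite l'accumulation des termes $\frac12\log\dim E_i$.

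Je traiterais d'abord les deux in\'egalit\'es de gauche, \'el\'ementaires. Pour $\Lambda(\bigotimes_i\overline{E}_i,\qb)\le\prod_i\Lambda(\overline{E}_i,\qb)$, je fixerais $\varepsilon>0$, choisirais pour chaque $i$ un \'el\'ement non nul $x_i\in E_i\otimes\qb$ (d\'efinis sur une extension finie commune) de hauteur $\le\Lambda(\overline{E}_i,\qb)+\varepsilon$, et observerais que le tenseur pur $x_1\otimes\cdots\otimes x_N$ est non nul dans $(\bigotimes_iE_i)\otimes\qb$, de hauteur $\prod_ih(x_i)$, les normes d'un produit tensoriel ad\'elique hermitien \'etant multiplicatives sur les tenseurs purs en toute place~; on fait ensuite tendre $\varepsilon$ vers $0$. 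Pour $\widehat{\mu}_{\mathrm{max}}(\bigotimes_i\overline{E}_i)\ge\sum_i\widehat{\mu}_{\mathrm{max}}(\overline{E}_i)$, je choisirais dans chaque $\overline{E}_i$ un sous-fibr\'e non nul $\overline{F}_i$ de pente maximale~; alors $\bigotimes_i\overline{F}_i$ est un sous-fibr\'e non nul de $\bigotimes_i\overline{E}_i$, de pente $\sum_i\widehat{\mu}(\overline{F}_i)=\sum_i\widehat{\mu}_{\mathrm{max}}(\overline{E}_i)$, par additivit\'e du degr\'e d'Arakelov le long d'un produit tensoriel hermitien, $\widehat{\deg}(\overline{F}\otimes\overline{G})=(\dim G)\,\widehat{\deg}(\overline{F})+(\dim F)\,\widehat{\deg}(\overline{G})$.

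Pour les majorations, je poserais $\overline{G}_j=\bigotimes_{i=1}^j\overline{E}_i$ et appliquerais le th\'eor\`eme~\ref{prin} avec $\overline{E}=\overline{G}_{j-1}$ et $\overline{F}=\overline{E}_j$, d'o\`u $\Lambda(\overline{G}_j,\qb)\ge\Lambda(\overline{G}_{j-1},\qb)\,e^{-\widehat{\mu}_{\mathrm{max}}(\overline{E}_j)}$~; en faisant d\'ecro\^itre $j$ de $N$ \`a $2$ on obtient
\[\Lambda\Bigl(\bigotimes_{i=1}^N\overline{E}_i,\qb\Bigr)\ \ge\ \Lambda(\overline{E}_1,\qb)\,\prod_{i=2}^N e^{-\widehat{\mu}_{\mathrm{max}}(\overline{E}_i)}.\]
Pour \eqref{lamb}, j'utiliserais la majoration du th\'eor\`eme~\ref{Tzhi} sous la forme $e^{-\widehat{\mu}_{\mathrm{max}}(\overline{E}_i)}\ge\Lambda(\overline{E}_i,\qb)/\sqrt{\dim E_i}$ pour $i=2,\dots,N$, ce qui transforme l'in\'egalit\'e ci-dessus en $\Lambda(\bigotimes_i\overline{E}_i,\qb)\ge\prod_{i=1}^N\Lambda(\overline{E}_i,\qb)\,\big/\,\prod_{i=2}^N\sqrt{\dim E_i}$, soit exactement la majoration cherch\'ee. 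Pour \eqref{mum}, j'utiliserais plut\^ot la minoration du th\'eor\`eme~\ref{Tzhi}, $\Lambda(\overline{E}_1,\qb)\ge e^{-\widehat{\mu}_{\mathrm{max}}(\overline{E}_1)}$, qui donne $\Lambda(\bigotimes_i\overline{E}_i,\qb)\ge e^{-\sum_i\widehat{\mu}_{\mathrm{max}}(\overline{E}_i)}$~; j'appliquerais alors la majoration du th\'eor\`eme~\ref{Tzhi} au fibr\'e $\bigotimes_i\overline{E}_i$, de dimension $\prod_i\dim E_i$, ce qui fournit $e^{\widehat{\mu}_{\mathrm{max}}(\bigotimes_i\overline{E}_i)}\le\sqrt{\prod_i\dim E_i}\,\big/\,\Lambda(\bigotimes_i\overline{E}_i,\qb)\le\sqrt{\prod_i\dim E_i}\;e^{\sum_i\widehat{\mu}_{\mathrm{max}}(\overline{E}_i)}$, puis la majoration de \eqref{mum} en passant aux logarithmes.

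Une fois les th\'eor\`emes~\ref{Tzhi} et~\ref{prin} disponibles, il ne subsiste pas d'obstacle s\'erieux~: la seule chose \`a surveiller est la comptabilit\'e des termes logarithmiques, d'o\`u l'importance d'it\'erer le th\'eor\`eme~\ref{prin} d'abord et de n'invoquer le th\'eor\`eme~\ref{Tzhi} qu'une fois, \`a la fin. Il conviendra \'egalement de s'assurer que les ingr\'edients auxiliaires utilis\'es — associativit\'e (\`a isom\'etrie canonique pr\`es) du produit tensoriel hermitien, multiplicativit\'e des hauteurs sur les tenseurs purs, formule pour le degr\'e d'un produit tensoriel et existence d'un sous-fibr\'e de pente maximale — ont bien \'et\'e mis en place dans les parties pr\'ec\'edentes.
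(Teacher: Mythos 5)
Votre démonstration est correcte et suit essentiellement la même voie que le texte : itérer le théorème~\ref{prin} pour obtenir $\Lambda\bigl(\bigotimes_i\overline{E}_i,\qb\bigr)\ge\Lambda(\overline{E}_1,\qb)\prod_{i\ge2}e^{-\widehat{\mu}_{\mathrm{max}}(\overline{E}_i)}$ (c'est le corollaire~\ref{corominima}), puis combiner directement avec le théorème~\ref{Tzhi}, les minorations de gauche découlant des définitions (tenseurs purs et produit tensoriel des sous-espaces déstabilisants). Rien à redire, votre comptabilité des termes $\frac12\log\dim E_i$ coïncide avec celle du papier.
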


L'in\'egalit\'e~\eqref{mum} a d\'ej\`a fait l'objet de plusieurs travaux. Dans
le seul publi\'e \`a ce jour, Chen~\cite{hchen} obtient comme majorant
$\sum_{i=1}^N\log\dim E_i$. Avec le coefficient $1/2$, notre
majoration \'etait connue dans le cas $N=2$, \'etablie
ind\'ependamment par Bost\footnote{\emph{Stability of Hermitian vector
bundles over arithmetic curves and geometric invariant theory}.
Expos\'e au \emph{Chern Institute}, Nankai. Avril 2007.} et
Andr\'e~\cite{Andre}. En outre, dans un travail en cours, Bost et
Chen~\cite{bc} donnent une version plus forte de~\eqref{mum} o\`u le
majorant devient $(1/2)\sum_{i=2}^N\log\dim E_i$ et prouvent donc
l'exact analogue de \eqref{lamb}.

Dans le corollaire, les minorations suivent facilement des
d\'efinitions mais l'on peut se demander si elles peuvent \^etre
strictes. Dans le cas~\eqref{mum} une conjecture de Bost pr\'edit qu'il y a
toujours \'egalit\'e :
$\widehat{\mu}_{\mathrm{max}}(\overline{E}\otimes\overline{F})=
\widehat{\mu}_{\mathrm{max}}(\overline{E})+\widehat{\mu}_{\mathrm{max}}
(\overline{F})$ pour tous fibr\'es ad\'eliques hermitiens $\overline{E}$ et
$\overline{F}$. Ceci reste un probl\`eme 
ouvert : voir \cite{bc} pour des r\'esultats partiels ; plus bas nous
donnons seulement un exemple de situation o\`u il y a \'egalit\'e (en
pr\'esence d'une action de groupe).

En revanche dans le cadre de~\eqref{lamb} nous montrons que la conjecture
analogue se trouve \^etre {\em fausse}.

\begin{theo}\label{nml} Pour tous entiers naturels $n,m\ge2$ il existe
deux fibr\'es ad\'eliques hermitiens sur un corps de nombres
$\overline{E}$ et $\overline F$ de dimensions respectives $n$ et $m$ avec 
$$\Lambda(\overline{E}\otimes\overline{F},\qb)<\Lambda(\overline{E},\qb)
\Lambda(\overline{F},\qb).$$\end{theo}

Dans le langage de l'article d'Andr\'e \cite{Andre}, ceci signifie que
la propri\'et\'e d'\^etre num\'eriquement effectif ne se conserve pas
par produit tensoriel.

Dans la fin du texte, nous utilisons le th\'eor\`eme~\ref{prin} pour
estimer minimum et pente maximale des puissances sym\'etriques ou
ext\'erieures d'un fibr\'e ad\'elique hermitien.
Par exemple nous am\'eliorons la borne
$$\widehat{\mu}_{\mathrm{max}}\left(\overline{S^{\ell}(E)}\right)
\le\ell\left(\widehat{\mu}_{\mathrm{max}}(\overline{E})
+2n\log n\right)$$ due \`a Bost o\`u $n=\dim E$
(voir~\cite[th\'eor\`eme 7.1]{rendiconti}) en rempla\c cant
$2n\log n$ par $2\log n$. Nous montrons aussi par un exemple que
ceci est le bon ordre de grandeur et ne peut pas \^etre remplac\'e par
$c\log n$ si $c<1/2$.
\vskip3mm
\par\textbf{Remerciements}. Nous remercions Jean-Beno{\^{\i}}t Bost et
Huayi Chen ainsi que Yves Andr{\'e} de nous avoir communiqu{\'e} leurs
textes~\cite{bc} et~\cite{Andre} avant publication.

\section{Rappels de th\'eorie des pentes}\label{sectheopentes}
Soit $k$ un corps de nombres. Notons $V(k)$ l'ensemble des places de $k$ et
$k_{\mathbf{A}}$ les ad\`eles de $k$. Pour $v\in V(k)$, le corps
$\mathbf{C}_v$ d\'esigne la compl\'etion d'une cl{\^o}ture
alg\'ebrique du compl\'et\'e $k_v$ de $k$. Nous le munissons de la
valeur absolue $|\cdot|_v$ qui \'etend la valeur absolue usuelle de $\Q_v$,
adh\'erence de $\Q$ dans $k_v$. La \emph{formule du produit} s'\'ecrit 
$$\forall\,
x\in\,k\setminus\{0\},\quad\prod_{v\in V(k)}{|x|_v^{[k_v:\Q_v]}}=1.$$

\subsection{Fibr\'es ad\'eliques hermitiens} Soit
$n\in\mathbf{N}$. Soit $(k^n,|\cdot|_{2})$ le couple --- dit
\emph{fibr\'e hermitien standard} --- constitu\'e de l'espace
vectoriel $k^n$ et d'une collection de normes
$|\cdot|_{2}=(|\cdot|_{2,v}\colon \mathbf{C}_v^n\mapsto\mathbf{R}_+)_{v\in
V(k)}$ d\'efinies de la mani\`ere suivante~: pour tout vecteur
$(x_1,\ldots,x_n)$ de $\mathbf{C}_v^n$ on a \begin{equation*}
|(x_1,\ldots,x_n)|_{2,v}:=\begin{cases}\left(\sum_{i=1}^n{|
x_{i}|_v^{2}}\right)^{1/2} & \text{si $v$ est
archim\'edienne}\\ \max{\{| x_1|_v,\ldots,|
x_n|_v\}} & \text{sinon}.
\end{cases}\end{equation*}
\begin{defi}\label{definitionfva}
Un \emph{fibr\'e ad\'elique hermitien}
$\overline{E}=(E,(\|\cdot\|_{\overline{E},v})_v)$ sur $k$, de
dimension $n$, est la donn\'ee d'un $k$-espace vectoriel $E$ de
dimension $n$ et d'une famille de normes
$\|\cdot\|_{\overline{E},v}$ sur $E\otimes_{k}\mathbf{C}_v$,
index\'ees par les places $v$ de $k$, qui satisfont {\`a} la
condition suivante~: il existe une $k$-base $(e_1,\ldots,e_n)$ de
$E$ et une matrice ad\'elique $a=(a_v)_{v\in
V(k)}\in\GL_n(k_{\mathbf{A}})$ telles que, pour toute place $v\in
V(k)$, la norme sur $E\otimes_{k}\mathbf{C}_v$ est donn\'ee
par \begin{equation*}\label{definormun}\forall\,x=(x_1,\ldots,x_n)
\in\mathbf{C}_v^n,\quad\left\|\sum_{i=1}^n{x_{i}e_{i}}\right
\|_{\overline{E},v}=| a_v(x)|_{2,v}.\end{equation*}
\end{defi}
Cette d\'efinition est \'equivalente \`a celle de
\emph{fibr\'e vectoriel hermitien sur $\spec\mathcal{O}_{k}$}. Si
$k=\mathbf{Q}$, cette notion correspond {\`a} celle de
\emph{r\'eseau euclidien} au sens de la g\'eom\'etrie des
nombres classique. Nous renvoyons le lecteur
{\`a}~\cite{Bost,rendiconti} pour les
propri\'et\'es de ces objets mentionn\'ees ci-apr\`es (rappelons
qu'une base $e_1,\ldots,e_n$ de $E\otimes\mathbf{C}_v$ est dite
orthonorm\'ee si $\|\sum_{i=1}^nx_ie_i\|_{\overline{E},v}=|x|_{2,v}$
pour tout $x=(x_1,\ldots,x_n)\in\mathbf{C}_v^n$).

\subsection{Hauteurs}\label{defh} Soit $\overline{E}$ un fibr\'e
ad\'elique hermitien sur un corps de nombres $k$. 
Soient $K$ une extension alg\'ebrique de $k$ et $x$ un \'el\'ement de
$E\otimes_kK$. Soit $K'\subset K$ une extension finie de $k$ telle que
$x\in E\otimes_{k}K'$. Le $K'$-espace vectoriel $E\otimes_{k}K'$ a une 
structure de fibr{\'e} ad{\'e}lique hermitien induite par celle 
de $\overline{E}$ (pour $w\in V(K')$ au-dessus de $v\in V(k)$ on
identifie $E\otimes K'\otimes{\mathbf C}_w$ \`a $E\otimes{\mathbf
C}_v$). Nous noterons $\Vert.\Vert_{\overline{E},w}$ plut\^ot que 
$\Vert.\Vert_{\overline{E\otimes_{k}K'},w}$ la norme de
$E\otimes_{k}K'$ relative {\`a} $w\in V(K')$.
Avec ces conventions, la \emph{hauteur normalis\'ee} de
$x$ est le nombre r{\'e}el~: $$H_{\overline{E}}(x)=\prod_{v\in V(K')}{\|
x\|_{\overline{E},v}^{[K'_v:\mathbf{Q}_v]/[K':\mathbf{Q}]}}.$$ Cette
d\'efinition ne d\'epend pas de l'extension finie $K'$ choisie. Le
(premier) \emph{minimum} de $E$ sur $K$, not\'e
$\Lambda(\overline{E},K)$, est le nombre
r\'eel~: $$\Lambda(\overline{E},K):=\min{\left\{H_{\overline{E}}(x)\,;\
x\in E\otimes_{k}K\setminus\{0\}\right\}}.$$ On remarque que 
$\Lambda(\overline{E},K)=\inf_{K'|
k}\Lambda(\overline{E},K')$ o{\`u} $K'$ parcourt les
extensions finies de $k$ incluses dans $K$. Dans la suite nous voyons
toujours $k$ comme un sous-corps de $\qb$ et nous int\'eressons
principalement \`a $\Lambda(\overline E,\qb)$ (not\'e
$\exp\left(-\widehat\deg{}_\mathrm{n}^{(1)}(\overline E)\right)$ dans
\cite{bc}). Par ailleurs, si 
$\varphi\colon E\to F$ est une application lin\'eaire entre deux espaces
munis de structures ad\'eliques hermitiennes alors, en chaque place
$v$ de $k$, l'on dispose de la norme d'op\'erateur
$\|\varphi\|_v$ de l'application $(E\otimes\mathbf{C}_v,
\|\cdot\|_{\overline{E},v})\to(F\otimes\mathbf{C}_v,
\|\cdot\|_{\overline{F},v})$ induite par $\varphi$. La d\'efinition de
cette norme implique imm\'ediatement que si $\varphi$ est
\emph{injective} alors
\begin{equation}\label{ineqminima}\Lambda(\overline{F},K)\le
H(\varphi)\Lambda(\overline{E},K)\quad\text{o{\`u}}\quad
H(\varphi):=\prod_{v\in
V(k)}{\|\varphi\|_v^{[k_v:\mathbf{Q}_v]/[k:\mathbf{Q}]}}.\end{equation}
\subsection{Pentes} \'Etant donn\'e un fibr\'e hermitien $\overline{E}$
sur $k$ donn\'e par une matrice ad\'elique $a=(a_{v})_{v\in V(k)}$, 
la \emph{pente (normalis\'ee)} $\widehat{\mu}(\overline{E})$ 
de $\overline{E}$ est le nombre r\'eel $$\widehat{\mu}(\overline{E}):=
-\frac{1}{\dim E}\sum_{v\in V(k)}{\frac{[k_{v}:\mathbf{Q}_{v}]}
{[k:\mathbf{Q}]}\log\vert\det a_{v}\vert_{v}}.$$
La \emph{pente maximale} $\widehat{\mu}_{\mathrm{max}}(\overline{E})$ de
$\overline{E}$ est le maximum des pentes $\widehat{\mu}(\overline{F})$
lorsque $F$ parcourt les sous-espaces vectoriels non nuls de $E$ (le
fibr\'e $\overline{F}$ est $F$ muni des m\'etriques induites par
celles de $\overline{E}$). Le fibr\'e $\overline{E}$ est dit
\emph{semi-stable} si
$\widehat{\mu}_{\mathrm{max}}(\overline{E})=\widehat{\mu}(\overline{E})$.
La pente maximale est atteinte en un unique sous-espace de dimension
maximale et elle est invariante par extension de corps (nous rappelons
l'argument au paragraphe~\ref{act}).

\subsection{Dual et somme directe}\label{dsd}
Soit $\overline{E}$ un fibr\'e ad\'elique hermitien sur $k$. Le
\emph{dual} de $\overline{E}$, not\'e $\overline{E}^{\mathsf{v}}$,
est le fibr\'e ad\'elique hermitien d'espace sous-jacent
$E^{\mathsf{v}}=\Hom_{k}(E,k)$ muni des normes (d'op\'erateurs)
duales de celles sur $E$. On a
$\widehat{\mu}(\overline{E}^{\mathsf{v}})=-\widehat{\mu}(\overline{E})$
(voir~\cite[proposition~$4.21$]{rendiconti}). Soit $\overline{F}$ un
autre fibr\'e ad\'elique hermitien sur $k$. La \emph{somme directe
(hermitienne)} $\overline{E}\oplus\overline{F}$ de $\overline{E}$ et
$\overline{F}$ est le fibr\'e ad\'elique hermitien d'espace
sous-jacent $E\oplus F$ de
normes $$\|(x,y)\|_{\overline{E}\oplus\overline{F},v}=\begin{cases}(\|
x\|_{\overline{E},v}^{2}+\| y\|_{\overline{F},v}^{2})^{1/2}
&\text{si $v\mid\infty$},\\ \max{\{\|
x\|_{\overline{E},v},\| y\|_{\overline{F},v}\}} &
\text{si $v\nmid\infty$}\end{cases}$$ pour tous $x\in
E\otimes_{k}\mathbf{C}_v$ et $y\in F\otimes_{k}\mathbf{C}_v$.
On a $\widehat{\mu}(\overline{E}\oplus\overline{F})=
((\dim E)\widehat{\mu}(\overline{E})+(\dim
F)\widehat{\mu}(\overline{F}))/(\dim E\oplus F)$.

\subsection{Produit tensoriel}\label{produitt} Soient $\overline{E}$ et
$\overline{F}$ deux fibr\'es ad\'eliques hermitiens sur $k$.
Le \emph{produit tensoriel} $\overline{E}\otimes\overline{F}$
est le fibr\'e ad\'elique hermitien d'espace sous-jacent
$E\otimes_{k}F$ et dont la norme sur
$E\otimes F\otimes\mathbf{C}_v$ ($v\in V(k)$) est telle que des
bases orthonorm\'ees $(e_1,\ldots,e_n)$ de
$(E\otimes_{k}\mathbf{C}_v,\|\cdot\|_{\overline{E},v})$ et
$(f_1,\ldots,f_{m})$ de $(F\otimes_{k}\mathbf{C}_v,\|\cdot
\|_{\overline{F},v})$ donnent une base orthonorm\'ee $\{e_{i}\otimes
f_{j}\,;\ 1\le i\le n,\ 1\le j\le m\}$ de $(E\otimes F\otimes
\mathbf{C}_v,\|\cdot\|_{\overline{E}\otimes\overline{F},v})$. On a
la formule $\widehat{\mu}(\overline{E}\otimes\overline{F})=
\widehat{\mu}(\overline{E})+\widehat{\mu}(\overline{F})$
(voir~\cite[proposition~$5.2$]{rendiconti}).

\subsection{Puissance sym\'etrique}\label{puissancesym}
Soient $\overline{E}$ un fibr\'e ad\'elique hermitien sur $k$ et $\ell\ge
1$ un entier. La puissance sym\'etrique $\ell^{\text{\`eme}}$ de
$E$, not\'ee $S^{\ell}(E)$, est un quotient de la puissance
tensorielle $E^{\otimes\ell}$. Cet espace vectoriel est de dimension
$\binom{\ell+n-1}{n-1}$ o{\`u} $n=\dim E$. Il poss\`ede une structure ad\'elique
hermitienne $\overline{S^{\ell}(E)}$ d\'efinie de la mani\`ere
suivante~: soient $v$ une place de $k$ et $(e_1,\ldots,e_n)$ une
base orthonorm\'ee de $E_v=E\otimes\mathbf{C}_v$; la norme
$\|\cdot\|_{\overline{S^{\ell}(E)},v}$ est l'unique norme pour
laquelle la base $e^{i}:=e_1^{i_1}\cdots e_n^{i_n}$, pour
$i=(i_1,\ldots,i_n)\in\mathbf{N}^n$ de somme $|
i|=\ell$, est orthonorm\'ee si $v$ est ultram\'etrique et
orthogonale avec $$\|
e^{i}\|_{\overline{S^{\ell}(E)},v}=\sqrt{i_1!\cdots
i_n!/\ell!}$$ si $v$ est archim\'edienne. Il revient au
m{\^{e}}me de dire que $S^{\ell}(E)$ est muni de la structure quotient
de celle de $\overline{E}^{\otimes\ell}$
(voir~\cite[p. 45--46]{rendiconti}).

\subsection{Puissance ext\'erieure}
\label{secpuissanceexterieure}
Soient $\overline{E}$ un fibr\'e ad\'elique hermitien sur $k$ de
dimension $n\ge 1$ et $\ell\in\{1,\ldots,n\}$. 
La puissance
ext\'erieure $\wedge^{\ell}E$ peut {\^{e}}tre munie de normes
hermitiennes aux places $v$ de $k$ en d\'ecr\'etant qu'une base
orthonorm\'ee $(e_1,\ldots,e_n)$ de $E\otimes_{k}\mathbf{C}_v$
fournit une base orthonorm\'ee $\{e_{i_1}\wedge\cdots\wedge
e_{i_{\ell}}\,;\ 1\le i_1<\cdots<i_{\ell}\le n\}$ de
$\wedge^{\ell}E\otimes_{k}\mathbf{C}_v$. Ceci conf\`ere une
structure de fibr\'e hermitien $\overline{\wedge^{\ell}E}$ {\`a} la
puissance ext\'erieure, qui, contrairement
au cas de la puissance sym{\'e}trique, n'est pas exactement la
structure quotient que l'on obtient par la projection canonique
$E^{\otimes\ell}\to\wedge^{\ell}E$ (un facteur $\sqrt{\ell!}$
diff\'erencie les normes aux places archim\'ediennes). La pente de ce
fibr\'e v\'erifie l'\'egalit\'e
\begin{equation*}\label{pentepe}\widehat{\mu}
\left(\overline{\wedge^{\ell}E}\right)=\ell\widehat{\mu}
(\overline{E})\end{equation*}(une d\'emonstration de cette
formule repose sur l'\'egalit\'e $\det\wedge^{\ell} u=(\det
u)^{\binom{n-1}{\ell -1}}$ o{\`u} $u$ est un endomorphisme d'espace
vectoriel). Lorsque $\ell=n$, cette formule peut {\^{e}}tre utilis{\'e}e  
comme d{\'e}finition de la pente de $\overline{E}$ en termes de
hauteurs. En effet, dans ce cas, $\wedge^{n}E$ est une droite et l'on a
$\widehat{\mu}(\overline{\wedge^{n}E})=-\log
\Lambda(\overline{\wedge^{n}E},k)=
-\log H_{\overline{\wedge^{n}E}}(x)$, pour un \'el\'ement $x$
quelconque de $(\wedge^{n}E)\setminus\{0\}$.

\subsection{In\'egalit\'e de convexit\'e pour les hauteurs}\label{ich}
\begin{lemma}\label{convexe} Soient $N$ un entier $\ge 1$ et
$\overline{E}_1,\ldots,\overline{E}_N$ des fibr\'es
ad\'eliques hermitiens sur $k$. Pour tout $(x_1,\ldots,x_N)\in
E_1\times\cdots\times E_N$, on a \begin{equation*}\left(\sum_{i=1}^N
{H_{\overline{E}_{i}}(x_{i})^{2}}\right)^{1/2}\le H_{\overline{E}_1
\oplus\cdots\oplus\overline{E}_N}(x_1,\ldots,x_N).\end{equation*}
\end{lemma}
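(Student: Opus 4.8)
The plan is to reduce the assertion to a purely local (place-by-place) statement about the norm of a fixed vector under the direct-sum metric, and then take the weighted product over all places. First I would fix an auxiliary finite extension $K'/k$ large enough that each $x_i$ lies in $E_i\otimes_k K'$, so that the height $H_{\overline{E}_i}(x_i)$ is computed as the weighted product $\prod_{w\in V(K')}\|x_i\|_{\overline{E}_i,w}^{[K'_w:\mathbf{Q}_w]/[K':\mathbf{Q}]}$, and likewise for the direct sum evaluated at $(x_1,\ldots,x_N)$. The point is that $E_1\oplus\cdots\oplus E_N$ with the induced structure over $K'$ is exactly $\overline{E}_1\otimes_k K'\oplus\cdots$, so the local norms defining $H_{\overline{E}_1\oplus\cdots\oplus\overline{E}_N}(x_1,\ldots,x_N)$ are the ones recalled in \S\ref{dsd}.

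Next I would establish the local inequality: for each place $w$ of $K'$,
\begin{equation*}
\Bigl(\sum_{i=1}^N\|x_i\|_{\overline{E}_i,w}^{2}\Bigr)^{1/2}
\le N^{\varepsilon_w/2}\,\|(x_1,\ldots,x_N)\|_{\overline{E}_1\oplus\cdots\oplus\overline{E}_N,w},
\end{equation*}
where $\varepsilon_w=1$ if $w\mid\infty$ and $\varepsilon_w=0$ otherwise. At an archimedean $w$ this is an equality by the very definition of the hermitian direct sum. At an ultrametric $w$ the right-hand norm is $\max_i\|x_i\|_{\overline{E}_i,w}$, and $(\sum_i a_i^2)^{1/2}\le\sqrt N\max_i a_i$ for nonnegative reals $a_i$; this is where the constant $\sqrt N$ a priori enters — but the key observation is that it enters \emph{only} at the finitely many archimedean places, where in fact we have equality, so no constant survives. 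Concretely, the naive bound $\sqrt N$ appears at ultrametric places but is not needed there, and is absent at archimedean places.

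Then I would raise the displayed local inequality to the power $[K'_w:\mathbf{Q}_w]/[K':\mathbf{Q}]$ and multiply over all $w\in V(K')$. Using super-multiplicativity of the $\ell^2$-norm — more precisely, the elementary inequality $\prod_w(\sum_i a_{i,w}^2)^{1/2}\ge(\sum_i\prod_w a_{i,w})^{1/2}$ applied with suitable exponents, which is just the statement that a weighted $\ell^2$-combination of products dominates the product of $\ell^2$-combinations, or equivalently an application of the generalized Hölder/Minkowski inequality — the left side becomes $\ge(\sum_i H_{\overline{E}_i}(x_i)^2)^{1/2}$, while the right side becomes $H_{\overline{E}_1\oplus\cdots\oplus\overline{E}_N}(x_1,\ldots,x_N)$ times $N$ raised to the power $(1/2)\sum_{w\mid\infty}[K'_w:\mathbf{Q}_w]/[K':\mathbf{Q}]=(1/2)\cdot 1$... and here one must be careful: that exponent sums to $1$, which would give an unwanted factor $\sqrt N$. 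The resolution, and the main obstacle to get right, is that one must \emph{not} bound archimedean places by $\sqrt N$ at all: at those places one keeps the exact identity from \S\ref{dsd}, applies the integral Minkowski inequality $(\sum_i\|y_i\|^2)^{1/2}$ only fibrewise, and the ultrametric places contribute $\max$, for which $(\sum_i a_i^2)^{1/2}\le\sqrt N\max a_i$ is replaced by the sharper fact that in the product formula the ultrametric contributions of a single nonzero algebraic number are themselves constrained. The cleanest route avoiding all constants is: first reduce to $N=2$ by induction (associativity of $\oplus$ for the hermitian structures is immediate from the definitions), then for $N=2$ prove $H_{\overline{E}_1}(x_1)^2+H_{\overline{E}_2}(x_2)^2\le H_{\overline{E}_1\oplus\overline{E}_2}(x_1,x_2)^2$ directly by writing both sides as products over $w$ and invoking, place by place, that $\|(x_1,x_2)\|_w^2\ge\|x_1\|_w^2$ and $\ge\|x_2\|_w^2$ (true at every place, archimedean or not), combined with the super-multiplicativity $\prod_w(a_w+b_w)\ge\prod_w a_w+\prod_w b_w$ for nonnegative reals with $\sum$ of exponents equal to $1$ — which is exactly Hölder. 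I would carry out this $N=2$ Hölder computation in detail and then conclude the general case by induction, noting that $(\sum_{i=1}^{N}t_i^2)^{1/2}=((\sum_{i=1}^{N-1}t_i^2)+t_N^2)^{1/2}$ lets the two-term case propagate.
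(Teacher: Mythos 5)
Your final route is correct and is essentially the paper's own proof: place by place, the hermitian direct-sum norm is exactly the $\ell^2$ combination at the archimedean places (whose normalized weights sum to $1$) and dominates each component at the ultrametric ones, and the weighted super-additivity $\prod_w(a_w+b_w)^{\lambda_w}\ge\prod_w a_w^{\lambda_w}+\prod_w b_w^{\lambda_w}$ (H\"older, or the convexity of $t\mapsto\log(1+e^t)$ used in the paper) then yields the claim — the only difference being that the paper proves this super-additivity for all $N$ at once and applies it in one stroke, whereas you induct on $N$ at the level of heights via associativity of $\oplus$, which is a cosmetic change. The intermediate detour (the local bound with the factor $N^{\varepsilon_w/2}$ and the remark about the product formula constraining ultrametric contributions) is unnecessary and not itself a valid argument, but it is superseded by your final, sound version.
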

\begin{proof}
Par convexit\'e de l'application $x\mapsto\log(1+e^x)$, on a, pour
tout ensemble $J$ de cardinal $D\ge 1$ et toute famille de nombres
r\'eels positifs $(\theta_j)_{j\in J}$, $$1+\left(\prod_{j\in
J}{\theta_{j}}\right)^{1/D}\le\prod_{j\in
J}{(1+\theta_{j})}^{1/D}.$$ Par r\'ecurrence sur $N$, on en
d\'eduit que, pour tout $(a_{i,j})_{1\le i\le N,\,j\in J}\in
(\mathbf{R}_+)^{ND}$, on a $$\sum_{i=1}^N{\left(\prod_{j\in
J}{a_{i,j}}\right)^{1/D}}\le\prod_{j\in
J}{\left(a_{1,j}+\cdots+a_{N,j}\right)}^{1/D}.$$ On applique alors
cette in\'egalit\'e \`a l'ensemble $J$ des plongements
complexes de $k$ et $a_{i,j}=\|x_i\|_{\overline{E}_i,j}^2$ puis l'on
multiplie par les contributions ultram\'etriques pour conclure.\end{proof}

Avec les notations du lemme, nous d\'eduisons en particulier que pour
toute extension alg\'ebrique $K$ de $k$ on a $$\Lambda(\overline{E}_{1}
\oplus\cdots\oplus\overline{E}_{N},K)=\min_{1\le i\le N}
\Lambda(\overline{E}_{i},K).$$

\section{Autour du th\'eor\`eme de Zhang}\label{Pzh}
\subsection{\'Enonc\'e g\'en\'eral}
Soient $\overline{E}$ un fibr\'e ad\'elique hermitien de dimension $n\ge1$ sur un
corps de nombres 
et $i$ un entier avec $1\le i\le n$. On d\'efinit le $i$-\`eme minimum
de Zhang $Z_i(\overline{E})$ comme la borne inf\'erieure des nombres r\'eels
$r$ tels que l'adh\'erence de Zariski de $\{x\in
E\otimes\overline{\mathbf{Q}}\,;\ H_{\overline{E}}(x)\le r\}$ soit de
dimension au moins $i$. Nous avons $\Lambda(\overline{E},\qb)=Z_1(\overline{E})\le
Z_2(\overline{E})\le\cdots\le Z_n(\overline{E})$ et, si l'on note $H_n:=1+1/2+\cdots+1/n$
le nombre harmonique, le th\'eor\`eme des minima successifs de Zhang
dans le cas des fibr\'es ad\'eliques hermitiens s'\'enonce comme suit.

\begin{theo}\label{Tzh} Pour tout fibr\'e ad\'elique hermitien
$\overline{E}$ de dimension $n\ge1$, on a
$$(Z_1(\overline{E})\cdots Z_n(\overline{E}))^{1/n}\le
e^{(H_n-1)/2-\widehat{\mu}(\overline{E})}\le Z_n(\overline{E}).$$\end{theo}

Ce r\'esultat d\'ecoule du th\'eor\`eme~5.2 de \cite{Zhang} avec
$X=\mathbf{P}(E^{\mathsf v})$ et il faut \'evaluer la hauteur de $X$ comme par
exemple dans \cite{zh}. Mentionnons qu'en toute g\'en\'eralit\'e Zhang
traite le cas d'une vari\'et\'e arithm\'etique $X$ avec des
m\'etriques ad\'eliques non n\'ecessairement hermitiennes. Dans ce
cadre \'elargi, les in\'egalit\'es du th\'eor\`eme sont optimales car
il est possible d'avoir $(Z_1(\overline{E})\cdots Z_n(\overline{E}))^{1/n}
=Z_n(\overline{E})$ comme on le voit en prenant $E=k^n$ muni des
m\'etriques du supremum en toutes les places (on a
$Z_i(\overline{E})=1$ pour tout $i$ car les points \`a coordonn\'ees
dans les racines de l'unit\'e forment un ensemble dense de points de
hauteur minimale 1).

Lorsque l'on se limite au cadre hermitien comme ici, l'on peut tout de
m\^eme voir que l'ordre de grandeur de l'encadrement du th\'eor\`eme
ne peut pas \^etre am\'elior\'e. En effet si $\overline{E}$ est le fibr\'e
standard $(k^n,|\cdot|_2)$ alors on a $Z_i(\overline{E})=\sqrt i$ pour tout
$i$. Pour le voir on utilise le lemme~\ref{convexe} de convexit\'e
des hauteurs pour montrer que $H_{\overline{E}}(x)\ge\sqrt i$ d\`es que $x$ a $i$
coordonn\'ees non nulles, avec \'egalit\'e lorsque toutes ces
coordonn\'ees sont des racines de l'unit\'e. Dans ce cas, l'assertion
du th\'eor\`eme s'\'ecrit $$n!^{1/(2n)}\le\exp((H_n-1)/2)\le\sqrt n$$
o\`u $n!^{1/(2n)}\ge\sqrt{n/e}$.

\subsection{Lemme de Siegel absolu}

Le th\'eor\`eme~\ref{Tzh} implique le lemme suivant qui lui-m\^eme
entra\^\i ne le th\'eor\`eme~\ref{Tzhi}.

\begin{lemma} Pour tout fibr\'e ad\'elique hermitien de dimension
$n\ge1$, on a 
$$1\le\Lambda(\overline{E},\qb)e^{\widehat{\mu}_{\mathrm{max}}
(\overline{E})}\le e^{(H_n-1)/2}.$$\end{lemma}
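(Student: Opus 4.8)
La minoration $\Lambda(\overline{E},\qb)e^{\widehat{\mu}_{\mathrm{max}}(\overline{E})}\ge1$ est la partie facile : si $x\in E\otimes\qb\setminus\{0\}$ engendre sur $\qb$ une droite $D$, en descendant à une extension finie $K'|k$ telle que $x\in E\otimes K'$ on obtient une droite $D'\subset E\otimes K'$ avec $\widehat{\mu}(\overline{D'})=-\log H_{\overline{E}}(x)$ ; comme la pente maximale est invariante par extension de corps, $\widehat{\mu}_{\mathrm{max}}(\overline{E})=\widehat{\mu}_{\mathrm{max}}(\overline{E}\otimes K')\ge\widehat{\mu}(\overline{D'})=-\log H_{\overline{E}}(x)$, d'où $H_{\overline{E}}(x)\ge e^{-\widehat{\mu}_{\mathrm{max}}(\overline{E})}$ ; en passant à l'infimum sur $x$ on trouve $\Lambda(\overline{E},\qb)\ge e^{-\widehat{\mu}_{\mathrm{max}}(\overline{E})}$.

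Pour la majoration, l'idée est de se ramener au sous-fibré réalisant la pente maximale. Soit $F\subset E$ le sous-espace (non nul, de dimension $d\le n$) tel que $\widehat{\mu}(\overline{F})=\widehat{\mu}_{\mathrm{max}}(\overline{E})$, muni de la structure induite $\overline{F}$. Comme l'inclusion $F\hookrightarrow E$ est isométrique en chaque place, tout élément de $F\otimes\qb\setminus\{0\}$ est aussi dans $E\otimes\qb$ avec la même hauteur, donc $\Lambda(\overline{E},\qb)\le\Lambda(\overline{F},\qb)=Z_1(\overline{F})$. Il suffit donc de majorer $Z_1(\overline{F})e^{\widehat{\mu}(\overline{F})}$. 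On applique le théorème~\ref{Tzh} à $\overline{F}$ : puisque $Z_1(\overline{F})\le Z_i(\overline{F})$ pour tout $i$, on a
$$Z_1(\overline{F})\le\left(Z_1(\overline{F})\cdots Z_d(\overline{F})\right)^{1/d}\le e^{(H_d-1)/2-\widehat{\mu}(\overline{F})},$$
soit $Z_1(\overline{F})e^{\widehat{\mu}(\overline{F})}\le e^{(H_d-1)/2}$. Comme $d\le n$ et que $H_d$ croît avec $d$, on conclut
$$\Lambda(\overline{E},\qb)e^{\widehat{\mu}_{\mathrm{max}}(\overline{E})}\le Z_1(\overline{F})e^{\widehat{\mu}(\overline{F})}\le e^{(H_d-1)/2}\le e^{(H_n-1)/2}.$$

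Le point délicat n'est pas dans le calcul mais dans la justification que l'on peut bel et bien remplacer $\overline{E}$ par $\overline{F}$ : il faut d'une part s'assurer que la structure induite sur $F$ est bien hermitienne (c'est le cas, les normes induites d'un fibré adélique hermitien le sont), et d'autre part utiliser l'inégalité~\eqref{ineqminima} appliquée à l'inclusion $\varphi\colon F\hookrightarrow E$, qui est injective et de norme $\le 1$ en chaque place, pour obtenir $\Lambda(\overline{E},K)\le\Lambda(\overline{F},K)$ pour toute extension $K$, en particulier $K=\qb$. L'emploi de $H_n$ plutôt que $H_d$ est ensuite une simple majoration grossière qui garantit un énoncé ne dépendant que de $n=\dim E$.
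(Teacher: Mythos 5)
Votre démonstration est correcte et suit essentiellement la même voie que celle de l'article : application du théorème~\ref{Tzh} au sous-espace $F$ réalisant la pente maximale, comparaison $\Lambda(\overline{E},\qb)\le\Lambda(\overline{F},\qb)$ via l'inclusion isométrique, puis $H_{\dim F}\le H_n$. La minoration que vous détaillez (pente d'une droite engendrée par $x$ égale à $-\log H_{\overline{E}}(x)$ et invariance de la pente maximale par extension des scalaires) est exactement l'argument « tautologique » du texte, simplement explicité.
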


\begin{proof} L'in\'egalit\'e 
$\Lambda(\overline{E},\qb)\le(Z_1(\overline{E})\cdots
Z_n(\overline{E}))^{1/n}$ fournit 
$\Lambda(\overline{E},\qb)e^{\widehat{\mu}(\overline{E})}\le\exp((H_n-1)/2)$
et nous appliquons cette estimation \`a un sous-espace $F$ tel que
$\widehat{\mu}_{\mathrm{max}}(\overline{E})=\widehat{\mu}(\overline F)$ :
$$\Lambda(\overline{E},\qb)
e^{\widehat{\mu}_{\mathrm{max}}(\overline{E})}\le
\Lambda(\overline{F},\overline{\mathbf{Q}})
e^{\widehat{\mu}(\overline{F})}\le\exp((H_{\dim F}-1)/2)\le\exp((H_n-1)/2).$$
La premi\`ere minoration $-\log\Lambda(\overline{E},\qb)
\le\widehat{\mu}_\mathrm{max}(\overline{E})$ traduit simplement
l'in\'egalit\'e tautologique $$\sup\{\widehat{\mu}(\overline{F})\mid 
F\subset E\otimes\qb,\ \dim F=1\}\le\sup\{\widehat{\mu}(\overline{F})\mid
F\subset E\otimes\qb,\ \dim F\ge1\}$$ lorsque l'on sait que la pente
maximale est invariante par extension des scalaires.\end{proof}

Au lieu du th\'eor\`eme~\ref{Tzhi} nous aurions pu donner un
\'enonc\'e interm\'ediaire avec des minima successifs plus classiques
c'est-\`a-dire d\'efinis comme les $Z_i$ en rempla{\c{c}}ant l'adh\'erence
de Zariski par l'espace vectoriel engendr\'e. On obtient alors un
lemme de Siegel absolu exactement de la forme de celui de Roy-Thunder
(avec une meilleure constante). Ceci ne pr\'esente pas d'int\'er\^et
pour ce qui suit car l'on sait que la constante optimale est la m\^eme
dans les deux formulations (voir \cite{rt} et \cite{vaa}).

\subsection{Th\'eor\`eme de Minkowski-Hlawka absolu} \'Etant
donn\'e un fibr\'e ad\'elique hermitien $\overline{E}$ sur $k$, notons
$q(\overline{E})$ le nombre r\'eel 
$\Lambda(\overline{E},\overline{\mathbf{Q}})e^{\widehat{\mu}(\overline{E})}$
(analogue de l'invariant d'Hermite d'un r\'eseau euclidien). Le
r{\'e}sultat que  
nous pr{\'e}sentons dans ce paragraphe est une version plus pr{\'e}cise
du th{\'e}or{\`e}me~\ref{thlawka}. 
\begin{theo}\label{mha}
Pour tout entier $n\ge 1$, pour tout $\varepsilon>0$, il
existe un fibr\'e ad\'elique hermitien $\overline{E}$ de
dimension $n$ (sur un corps de nombres qui d\'epend de $n$) tel
que $\sqrt{n!}^{1/n}(1-\varepsilon)\le q(\overline{E})$. En
particulier il existe $\overline{E}$ de dimension $n$ tel que
$\sqrt{n/e}\le q(\overline{E})$.
\end{theo}

Le th{\'e}or{\`e}me de Minkowski-Hlawka classique 
affirme que, pour tout
$n\ge 2$, il existe un r\'eseau euclidien $\overline E$ sur $\mathbf Q$ de
dimension $n$ avec $\Lambda(\overline{E},\mathbf{Q})
e^{\widehat{\mu}(\overline{E})}\ge(2\zeta(n)/v_n)^{1/n}$ o\`u $v_n$
est le volume de la boule unit\'e de $(\mathbf{R}^n,|\cdot|_2)$. Plus
g\'en\'eralement Thunder \cite{th} a d\'emontr\'e une minoration
analogue avec un fibr\'e $\overline E$ sur un corps de nombres $k$ fix\'e :
$\Lambda(\overline{E},k)e^{\widehat{\mu}(\overline{E})}
\ge c_k\sqrt n+o(1)$ pour un r\'eel $c_k>0$. Toutefois, contrairement
\`a ces r\'esultats o\`u l'existence de
l'exemple s'obtient par un argument de moyenne, nous donnons ici un
exemple \emph{explicite} de $\overline{E}$ satisfaisant la minoration.

\begin{proof}Si $n=1$ alors $q(\overline{E})=1$ pour tout
$\overline{E}$ et le th\'eor\`eme est d\'emontr\'e. Supposons
maintenant $n\ge 2$. Soit $M\in\mathrm{M}_n(\mathbf{Z})$ une
matrice dont aucun mineur (d'ordre quelconque) n'est nul. Par
exemple la matrice de coefficient $(i,j)$ \'egal {\`a} $(i+j)!!$
convient. Choisissons ensuite un
nombre premier $p$ tel que la propri\'et\'e des mineurs de $M$
reste vraie modulo $p$ (par exemple si $p$ est strictement plus
grand que les valeurs absolues des mineurs de $M$). Par ailleurs,
sans restriction, on peut supposer que
$\varepsilon<1-\sqrt{1-1/n}$. Prenons alors $a_1=1$ et
$a_{2},\ldots,a_n\in p^{\mathbf{Q}}$ tels que, pour tout
$i\in\{2,\ldots,n\}$, on a
$\sqrt{i}(1-\varepsilon)<a_{i}\le\sqrt{i}$. La condition sur
$\varepsilon$ entra{\^{\i}}ne $a_1<\cdots<a_n$. Soit
$k=\mathbf{Q}(a_1,\ldots,a_n)$ et, pour $t\in\{1,\ldots,n\}$,
posons $\Delta_{t}$ la matrice diagonale
$\mathrm{diag}(a_1,\ldots,a_{t})$ de
$\mathrm{M}_{t}(k)$. Consid\'erons le $k$-fibr\'e ad\'elique
hermitien $\overline{E}$ \'egal {\`a} $(k^n,|\cdot|_{2})$
sauf en les places $v$ de $k$ au-dessus de $p$ pour lesquelles on
pose $$\forall\,x\in E\otimes_{k}\mathbf{C}_v,\quad\|
x\|_{\overline{E},v}:=\left|\Delta_nM(x)\right|_{2,v}.$$
La pente de $\overline{E}$ vaut $-(1/n)\log\prod_{v\mid
p}{|\det(\Delta_nM)|_v^{[k_v:\mathbf{Q}_v]/[k:\mathbf{Q}]}}=\log
(a_1\cdots a_n)^{1/n}$ car $| a_{i}|_v=a_{i}^{-1}$
pour tout $v\mid p$. Montrons maintenant que
$\Lambda(\overline{E},\overline{\mathbf{Q}})=1$. Soient
$t\in\{1,\ldots,n\}$ et
$x\in\overline{\mathbf{Q}}^n\setminus\{0\}$ ayant exactement $t$
coordonn\'ees non nulles, disons
$x_{j_1},\ldots,x_{j_{t}}$. Soient
$x'=(x_{j_1},\ldots,x_{j_{t}})$ et $m$ la matrice extraite
$t\times t$ de $M$ de lignes $\{1,\ldots,t\}$ et de colonnes
$\{j_1,\ldots,j_{t}\}$. Le vecteur
$\Delta_{t}m(x')$ est le vecteur des $t$ premi\`eres
coordonn\'ees de $\Delta_nM(x)$ et donc, pour
toute place $v$ de $k$ au-dessus de $p$, on a $\|
x\|_{\overline{E},v}\ge|\Delta_{t}m(x')|_{2,v}$. Cette
derni\`ere quantit\'e vaut $\max_{i}{| a_{i}y_{i}|_v}$
o{\`u} les $y_{i}$ sont les coordonn\'ees de
$m(x')$. On obtient alors $$\|
x\|_{\overline{E},v}\ge| a_{t}|_v|
m(x')|_{2,v}=a_{t}^{-1}|x'|_{2,v}=a_{t}^{-1}| x|_{2,v}$$
car $m$ est une isom\'etrie en la place $v$ (choix de $M$). On en d\'eduit
que $$H_{\overline{E}}(x)\ge
a_{t}^{-1}H_{(k^n,|\cdot|_{2})}(x)\ge
a_{t}^{-1}\sqrt{t}\ge 1$$
en utilisant le lemme~\ref{convexe}. Comme
de plus $H_{\overline{E}}(1,0,\ldots 0)=1$, ceci montre que
$\Lambda(\overline{E},\overline{\mathbf{Q}})=1$ puis
$q(\overline{E})=(a_1\cdots a_n)^{1/n}$. La minoration
$q(\overline{E})\ge(1-\varepsilon)\sqrt{n!}^{1/n}$ d\'ecoule alors
du choix des $a_{i}$ et l'in\'egalit\'e $n!>(n/e)^n$
entra{\^{\i}}ne $q(\overline{E})\ge\sqrt{n/e}$ (avec un choix
convenable de $\varepsilon$).
\end{proof}

\section{Minima et produit tensoriel}
L'{\'e}tude du comportement du premier minimum
d'un fibr{\'e} hermitien par produit tensoriel remonte aux
travaux de Kitaoka dans les ann\'ees
soixante-dix~\cite[chapitre~$7$]{Kitaoka}.  
Il a montr{\'e} que $\Lambda(\overline{E}\otimes\overline{F},\mathbf{Q})=
\Lambda(\overline{E},\mathbf{Q})\Lambda(\overline{F},\mathbf{Q})$
 lorsque $\overline{E},\overline{F}$ sont des fibr{\'e}s ad{\'e}liques 
 hermitiens sur $\mathbf{Q}$ avec $\dim E\le 43$.
\textit{A contrario} Steinberg a \'etabli
l'existence, pour tout entier $n\ge 292$, d'un fibr{\'e}
$\overline{E}$ sur $\mathbf{Q}$, de dimension $n$, tel que 
$\Lambda(\overline{E}^{\otimes 2},\mathbf{Q})<
\Lambda(\overline{E},\mathbf{Q})^{2}$ (voir~\cite[p.~$47$]{Milnor}).
Par la suite, Coulangeon a obtenu des
r\'esultats analogues pour des minima de fibr{\'e}s sur
un corps imaginaire quadratique~\cite{Coulangeon}. Dans
cette partie nous examinons le comportement du minimum sur
$\overline{\mathbf{Q}}$ vis-\`a-vis du produit tensoriel. La situation
est plus tranch\'ee : nous montrons que le couple $\{43,292\}$ sur
$\mathbf{Q}$ peut \^etre remplac\'e par $\{1,2\}$.

\subsection{R\'eseau de racines de type $\mathbf{A}_n$}\label{an}

Nous donnons un bref exemple pour lequel on a multiplicativit\'e par
rapport au produit tensoriel. Ce serait d\'ej\`a le cas pour le
fibr\'e standard mais nous proposons une variante qui nous servira
\'egalement \`a illustrer d'autres r\'esultats.

Soient $n$ un entier $\ge 1$ et $A_n$
l'hyperplan $\{(x_1,\ldots,x_{n+1})\,;\
x_1+\cdots+x_{n+1}=0\}$ de $\mathbf{Q}^{n+1}$.
\begin{prop} Le fibr\'e ad\'elique hermitien
$\mathbf{A}_n:=(A_n,|\cdot|_{2})$ poss\`ede la
propri\'et\'e suivante. Pour toute extension alg\'ebrique
$K/\mathbf{Q}$, pour tout 
fibr\'e ad\'elique hermitien $\overline{E}$ sur $\mathbf{Q}$, on
a $$\Lambda(\mathbf{A}_n\otimes
\overline{E},K)=\Lambda(\mathbf{A}_n,K)\Lambda(\overline{E},K)\quad
\text{et}\quad\Lambda(\mathbf{A}_n,K)=\sqrt{2}.$$
\end{prop}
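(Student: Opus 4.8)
The strategy is to reduce everything to two facts: first, the explicit value $\Lambda(\mathbf{A}_n,K)=\sqrt2$ for every algebraic extension $K/\mathbf{Q}$, and second, a general lower bound $\Lambda(\mathbf{A}_n\otimes\overline E,K)\ge\sqrt2\,\Lambda(\overline E,K)$ which, combined with the always-valid upper bound coming from $\otimes$-multiplicativity of heights on pure tensors (take $a\otimes b$ with $H(a)$ close to $\Lambda(\mathbf{A}_n,K)$ and $H(b)$ close to $\Lambda(\overline E,K)$, using the orthonormality of $e_i\otimes f_j$ in the definition of $\overline E\otimes\overline F$ of \S\ref{produitt}), gives the equality. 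So the real content is the two inequalities $\Lambda(\mathbf{A}_n,K)=\sqrt2$ and $\Lambda(\mathbf{A}_n\otimes\overline E,K)\ge\sqrt2\,\Lambda(\overline E,K)$.

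\textbf{Step 1: the value $\sqrt2$.} For the upper bound, the vector $e_1-e_2=(1,-1,0,\ldots,0)\in A_n$ lies in the standard fibre, so by the computation of $Z_i$ for the standard bundle recalled after Theorem~\ref{Tzh} (height $\sqrt i$ for a vector with $i$ nonzero coordinates, with equality when the nonzero coordinates are roots of unity), its height is $\sqrt2$; hence $\Lambda(\mathbf{A}_n,K)\le\sqrt2$. For the lower bound, any nonzero $x\in A_n\otimes\qb$ has at least two nonzero coordinates (it cannot be a nonzero multiple of a single basis vector since the coordinates must sum to zero), so the convexity Lemma~\ref{convexe} applied to the standard bundle gives $H(x)\ge\sqrt2$. (Here one uses that $\mathbf{A}_n$ carries the metrics \emph{induced} by the standard bundle on $\mathbf{Q}^{n+1}$, so heights in $\mathbf{A}_n$ agree with heights in $(\mathbf{Q}^{n+1},|\cdot|_2)$.)

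\textbf{Step 2: the tensor lower bound.} Let $z\in(A_n\otimes E)\otimes_{\mathbf Q}K\setminus\{0\}$. View $A_n\otimes E\subset\mathbf{Q}^{n+1}\otimes E=E^{n+1}$, so $z=(z_1,\ldots,z_{n+1})$ with $z_i\in E\otimes K$ and $z_1+\cdots+z_{n+1}=0$; at least two of the $z_i$ are nonzero. In each place $w$, the norm on $(\mathbf{Q}^{n+1}\otimes E)\otimes\mathbf{C}_w$ is, by the definition of $\otimes$, the $\ell^2$ (archimedean) or sup (ultrametric) combination of the $\|z_i\|_{\overline E,w}$; this is exactly the norm of the $(n+1)$-fold hermitian direct sum $\overline E^{\oplus(n+1)}$ of \S\ref{dsd}. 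So $H_{\overline E^{\oplus(n+1)}}(z)\le H_{\mathbf{A}_n\otimes\overline E}(z)$ would be the wrong direction; instead I argue as in the proof of Step~1 but ``with coefficients in $\overline E$'': apply the convexity Lemma~\ref{convexe} to the $n+1$ copies $\overline E_i=\overline E$ and the vectors $(z_1,\ldots,z_{n+1})$, which have at least two nonzero entries, to get
\[
H_{\mathbf{A}_n\otimes\overline E}(z)=H_{\overline E^{\oplus(n+1)}}(z_1,\ldots,z_{n+1})\ge\Bigl(\sum_i H_{\overline E}(z_i)^2\Bigr)^{1/2}\ge\sqrt2\,\min_i\{H_{\overline E}(z_i)\mid z_i\ne0\}\ge\sqrt2\,\Lambda(\overline E,K).
\]
Thus $\Lambda(\mathbf{A}_n\otimes\overline E,K)\ge\sqrt2\,\Lambda(\overline E,K)$, and combined with Step~1 and the pure-tensor upper bound we obtain the stated equality.

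\textbf{Main obstacle.} The delicate point is the identification, at each place $w$, of the metric of $\mathbf{A}_n\otimes\overline E$ restricted to a vector of $E^{n+1}$ with the direct-sum metric on $\overline E^{\oplus(n+1)}$: one must check that an orthonormal basis $e_1-e_2,\ldots$? No — rather that the standard basis vectors of $\mathbf{Q}^{n+1}$, though not a basis of $A_n$, still let one compute: the inclusion $A_n\hookrightarrow\mathbf{Q}^{n+1}$ is an isometry at every place (the standard basis of $\mathbf{Q}^{n+1}$ is orthonormal and we give $A_n$ the induced metric), hence $A_n\otimes E\hookrightarrow\mathbf{Q}^{n+1}\otimes E=E^{\oplus(n+1)}$ is an isometry at every place, which is precisely what legitimizes writing $H_{\mathbf{A}_n\otimes\overline E}(z)=H_{\overline E^{\oplus(n+1)}}(z)$ for $z\in A_n\otimes E$. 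Once this bookkeeping is in place the rest is a direct application of Lemma~\ref{convexe}, exactly mirroring the argument already used in the text to show $Z_1(k^n,|\cdot|_2)=1$ but $Z_2=\sqrt2$.
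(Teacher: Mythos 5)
Your proof is correct and follows essentially the same route as the paper: an isometric embedding of $\mathbf{A}_n\otimes\overline{E}$ into $\overline{E}^{\oplus(n+1)}$ (the paper writes it via the basis $(e_i-e_{n+1})$ as $x\mapsto(x_1,\ldots,x_n,x_1+\cdots+x_n)$, which is your coordinate description $(z_1,\ldots,z_{n+1})$ with zero sum), the observation that the zero-sum condition forces at least two nonzero components, Lemma~\ref{convexe} to get the lower bound $\sqrt2\,\Lambda(\overline{E},K)$, and a split vector $(e_i-e_j)\otimes x$ for the matching upper bound. The only cosmetic difference is that you isolate $\Lambda(\mathbf{A}_n,K)=\sqrt2$ as a separate step, while the paper obtains it as the special case of the same argument.
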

\begin{proof} On peut supposer que $K$ est un
corps de nombres. Pour $x_1,\ldots,x_n\in E_{K}:=E\otimes K$,
l'application $$x:=\sum_{i=1}^n{(e_{i}-e_{n+1})\otimes
x_{i}}\mapsto\iota(x):=(x_1,\ldots,x_n,x_1+\cdots+x_n)$$
est une injection (isom\'etrique) de
$\mathbf{A}_n\otimes\overline{E}_{K}$ dans
$\bigoplus_{i=1}^{n+1}{\overline{E}_{K}}$ et
donc $$H_{\mathbf{A}_n\otimes\overline{E}}(x)=H_{\bigoplus_{i=1}^{n+1}
{\overline{E}}}(\iota(x))\ge\left(H_{\overline{E}}(x_1+\cdots+x_n)^{2}
+\sum_{i=1}^n{H_{\overline{E}}(x_{i})^{2}}\right)^{1/2}$$ par
le lemme~\ref{convexe}. Dans le minorant au moins deux termes ne
sont pas nuls si $x\ne 0$ et, dans ce cas,
$H_{\mathbf{A}_n\otimes\overline{E}}(x)\ge\sqrt{2}\Lambda(\overline{E},K)$
puis $\Lambda(\mathbf{A}_n\otimes\overline{E},K)\ge\sqrt{2}\Lambda
(\overline{E},K)$. L'\'egalit\'e s'obtient en consid\'erant le
vecteur $(e_1-e_{n+1})\otimes x_1$ avec
$H_{\overline{E}}(x_1)=\Lambda(\overline{E},K)$.\end{proof}
De cette d\'emonstration l'on d\'eduit \'egalement que
les vecteurs $x$ de $A_n\otimes E_{K}$
tels que $H_{\mathbf{A}_n\otimes\overline{E}}(x)=
\Lambda(\mathbf{A}_n\otimes\overline{E},K)$
sont \emph{scind\'es}, de la forme $(e_{i}-e_{j})\otimes e$ avec
$i\ne j$ et $H_{\overline{E}}(e)=\Lambda(\overline{E},K)$. 

\subsection{Non-multiplicativit\'e}
Dans ce paragraphe nous d\'emontrons que le
premier minimum sur $\overline{\mathbf{Q}}$ n'est en g{\'e}n{\'e}ral
pas multiplicatif relativement au produit
tensoriel (th\'eor\`eme~\ref{nml}).
\par Soit $q\in\mathbf{Q}\setminus\{0\}$. Pour $\varepsilon\in\{-1,1\}$, 
soit $(2+\varepsilon i)^q$ une
puissance $q^{\text{\`eme}}$ de $2+\varepsilon i$. 
Soit $k$ une extension finie de
$\mathbf{Q}(i)$ contenant $(2+i)^q$ et $(2-i)^q$. Les nombres 
$2+i$ et $2-i$ engendrent les deux
id\'eaux premiers de $\mathbf{Z}[i]$ au-dessus de $5$ et la 
valeur absolue $v$-adique de $2+\varepsilon i$ vaut $1/5$ ou $1$
selon que la place $v$ de $k$ divise ou non la place $2+\varepsilon
i$. Consid\'erons alors le 
fibr\'e ad\'elique hermitien $\overline{E}_q$ sur $k$ d'espace
sous-jacent $k^{2}$, de normes $|\cdot|_{2,v}$ en toutes les
places $v$ de $k$ qui ne divisent pas $5$ et, pour
$\varepsilon\in\{-1,1\}$,  \begin{equation*}\label{norme}
\|(x,y)\|_{\overline{E}_q,v}:=\max{\left(|
x|_v,5^{q}\left\vert x+\varepsilon
y\right|_v\right)}\quad\text{si $v\mid
2+\varepsilon i$}.\end{equation*}En d'autres termes,
$\overline{E}_{q}$ est le fibr{\'e} hermitien donn{\'e} par la matrice 
$$A_{\varepsilon}=\begin{pmatrix} 1 & 0\\
(2+\varepsilon i)^{-q} & \varepsilon (2+\varepsilon
i)^{-q}\end{pmatrix}$$ en une place $v\mid 2+\varepsilon i$ (et 
la matrice identit{\'e} en les autres places). En particulier, on a 
$\widehat{\mu}(\overline{E}_{q})=-(q/2)\log 5$.

\begin{theo}\label{ce} Il existe un nombre r\'eel $\eta>0$ tel que,
pour tout nombre rationnel $q$ v\'erifiant
$\sqrt{2}<5^q<\sqrt{2}+\eta$, on a
$\Lambda(\overline{E}_q,\overline{\mathbf{Q}})=5^q$. En
particulier, pour un tel $q$, on a
$\Lambda(\overline{E}_q^{\otimes2},\overline{\mathbf{Q}})
<\Lambda(\overline{E}_q,\overline{\mathbf{Q}})^{2}$.
\end{theo}

La d\'emonstration utilise le lemme suivant.

\begin{lemma} Soient $q\in\mathbf{Q}$ et $(x,y)\in\overline{\mathbf{Q}}^{2}$
avec $q>0$ et $xy\ne0$. Alors $H_{\overline{E}_q}(x,y)>\sqrt2$.\end{lemma}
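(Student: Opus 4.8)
The goal is a strict lower bound $H_{\overline{E}_q}(x,y)>\sqrt2$ for $(x,y)\in\qb^2$ with $q>0$ and $xy\neq0$. The natural approach is to compute $H_{\overline{E}_q}(x,y)$ place by place over a number field $K$ containing $x$, $y$ and the relevant roots, separating the two primes above $5$ from the rest. Away from $5$, the metrics are the standard $|\cdot|_{2,v}$, so those places contribute exactly the standard height $H_{(K^2,|\cdot|_2)}$ of $(x,y)$. At a place $v\mid 2+i$ the norm is $\max(|x|_v,5^q|x+y|_v)$ and at $v\mid 2-i$ it is $\max(|x|_v,5^q|x-y|_v)$; at the remaining nonarchimedean and archimedean places it is $\max(|x|_v,|y|_v)$ (archimedean: the $\ell^2$-norm).

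First I would write
$$H_{\overline{E}_q}(x,y)=H_{(K^2,|\cdot|_2)}(x,y)\cdot\prod_{v\mid 5}\left(\frac{\|(x,y)\|_{\overline{E}_q,v}}{|(x,y)|_{2,v}}\right)^{[K_v:\Q_v]/[K:\Q]}.$$
Since $H_{(K^2,|\cdot|_2)}(x,y)\ge\sqrt2$ whenever both coordinates are nonzero (the case $i=2$ of the computation recalled after Theorem~\ref{Tzh}, via Lemma~\ref{convexe}), it suffices to show the correction factor at the primes above $5$ is $\ge1$, with strict inequality somewhere — or to track the slack carefully. At $v\mid 2+\varepsilon i$ one has $|x|_v,|y|_v\le 1$ is \emph{not} assumed, so I must argue directly: $\|(x,y)\|_{\overline{E}_q,v}=\max(|x|_v,5^q|x+\varepsilon y|_v)\ge |(x,y)|_{2,v}$? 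This is the crux. The point is that $5^q>\sqrt2>1$, and the ultrametric inequality gives $\max(|x|_v,|x+\varepsilon y|_v)\ge|y|_v$, hence $\max(|x|_v,5^q|x+\varepsilon y|_v)\ge\max(|x|_v,|x+\varepsilon y|_v)\ge|y|_v$ and trivially $\ge|x|_v$, so $\|(x,y)\|_{\overline{E}_q,v}\ge\max(|x|_v,|y|_v)=|(x,y)|_{2,v}$. Thus every factor at $v\mid5$ is $\ge1$ and $H_{\overline{E}_q}(x,y)\ge H_{(K^2,|\cdot|_2)}(x,y)\ge\sqrt2$.

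To upgrade to a strict inequality I would use the hypothesis $xy\neq0$ together with the product formula. If equality held throughout, then at every $v\mid 2+i$ we would need $\max(|x|_v,5^q|x+y|_v)=\max(|x|_v,|y|_v)$ and similarly with $2-i$, and moreover $H_{(K^2,|\cdot|_2)}(x,y)=\sqrt2$, which forces (by the equality case of Lemma~\ref{convexe}) the ratios $|x|_v/|y|_v$ to be tightly constrained and $x/y$ to be a root of unity at archimedean places. The key obstruction to equality is arithmetic: $5^q$ has negative $v$-valuation at primes above $5$ (that is $|5^q|_v<1$), while $x$, $y$ range over algebraic numbers; a careful valuation bookkeeping at the two primes above $5$, combined with $5^q>\sqrt2$, shows $\max(|x|_v,5^q|x+\varepsilon y|_v)$ strictly exceeds $\max(|x|_v,|y|_v)$ at \emph{at least one} place above $5$ unless $x+y=0$ at all $v\mid 2+i$ and $x-y=0$ at all $v\mid 2-i$ simultaneously — impossible for $xy\neq0$ since it would force $y=x$ and $y=-x$.

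\textbf{Main obstacle.} The delicate part is making the strictness argument uniform: I expect one must distinguish according to whether $|x|_v\ge 5^q|x+\varepsilon y|_v$ or not at each $v\mid5$, and show that the "defect" $\|(x,y)\|_{\overline{E}_q,v}/|(x,y)|_{2,v}$, multiplied out over all $v\mid5$, cannot collapse to exactly $1$ while simultaneously the standard part equals exactly $\sqrt2$. The cleanest route is probably: assume $H_{\overline{E}_q}(x,y)\le\sqrt2$; deduce from the place-by-place inequalities that equality holds everywhere; then at the $5$-adic places this forces $|x+y|_v\le 5^{-q}\max(|x|_v,|y|_v)<\max(|x|_v,|y|_v)$ for all $v\mid 2+i$ (so $x+y$ is "small" at those places) and likewise $|x-y|_v<\max(|x|_v,|y|_v)$ for $v\mid 2-i$, while off $5$ the vector behaves like a minimal standard vector; a contradiction is then extracted either from the product formula applied to $x+y$ and to $x-y$, or from the observation that $x/y$ would have to be simultaneously congruent to $-1$ at one prime above $5$ and to $+1$ at the other with the norms too small — incompatible with $x,y\neq0$. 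I would present the valuation computation at $v\mid5$ as the heart of the proof and treat the rest as assembly.
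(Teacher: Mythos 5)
The first half of your argument (the place-by-place bound $\|(x,y)\|_{\overline{E}_q,v}\ge\max(|x|_v,|y|_v)$ at $v\mid 5$, giving $H_{\overline{E}_q}(x,y)\ge H_{(k^2,|\cdot|_2)}(x,y)\ge\sqrt2$ via Lemme~\ref{convexe}) is exactly the paper's. But the strict inequality is the actual content of the lemma, and there your proposal has a genuine gap. Granting the equality case (which you should make explicit: $H_{(k^2,|\cdot|_2)}(x,y)=\sqrt2$ with $xy\ne0$ forces $y=\xi x$ with $\xi$ a root of unity, not merely ``a root of unity at archimedean places''), the situation to exclude is: $|1+\xi|_v\le 5^{-q}$ at every $v\mid 2+i$ and $|1-\xi|_v\le 5^{-q}$ at every $v\mid 2-i$. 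Neither of your two suggested contradictions works as stated. The product formula applied to $x+\varepsilon y$ (equivalently to $1\pm\xi$, or to $1-\xi^2$) only yields $1\le 2\cdot 5^{-q}$, i.e.\ a contradiction only when $5^q>2$; the lemma must hold for every $q>0$ and is used precisely when $5^q$ is close to $\sqrt2$, so this route fails quantitatively. The alternative claim that being ``congruent to $-1$ at one prime above $5$ and to $+1$ at the other'' is impossible is simply false for general algebraic numbers (the Chinese remainder theorem produces many such); what rules it out here is the so-far unused fact that $\xi$ is a root of unity. Your intermediate assertion that equality would force $x+y=0$ and $x-y=0$ (hence ``$y=x$ and $y=-x$'') is also incorrect: equality only forces the $v$-adic smallness above, not vanishing.

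The missing idea --- the heart of the paper's proof --- is an integrality argument exploiting that complex conjugation exchanges the two primes $2\pm i$ while fixing roots of unity up to inversion. The smallness conditions say exactly that $(1+\xi)/(2+i)^q$ and $(1-\xi)/(2-i)^q$ have all their ultrametric absolute values $\le1$, hence are algebraic integers; since $\xi$ is a root of unity, the identity $1-\xi=-\xi(\overline{1-\xi})$ transfers the second condition across conjugation and shows that $(1-\xi)/(2+i)^q$ is an algebraic integer as well; adding, $2/(2+i)^q$ would be an algebraic integer, which is absurd because its absolute value at any place above $2+i$ equals $5^q>1$ (note $2$ is a unit there). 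Without this transfer of the smallness of $1-\xi$ from the places above $2-i$ to those above $2+i$, your sketch does not close, so as it stands the proof of strictness is incomplete.
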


\begin{proof} Si $v\mid2+\varepsilon i$ nous avons
$\max(|x|_v,|y|_v)\le\max(|x|_v,|x+\varepsilon y|_v)\le\|(x,y)\|_{
\overline{E}_q,v}$ donc 
$H_{\overline{E}_q}(x,y)\ge H_{(k^{2},|\cdot|_{2})}(x,y)\ge\sqrt2$
o\`u la seconde in\'egalit\'e r\'esulte du
lemme~\ref{convexe}. Supposons qu'il y ait \'egalit\'e
$H_{\overline{E}_q}(x,y)=\sqrt{2}$. Alors
$y=\xi x$ avec $\xi$ racine de l'unit\'e v\'erifiant $|
1+\varepsilon\xi|_v\le 5^{-q}$ pour toute place $v\mid
2+\varepsilon i$ et tout $\varepsilon\in\{-1,1\}$. Ainsi
$(1+\varepsilon\xi)/(2+\varepsilon i)^q$, dont toutes les valeurs
absolues ultram\'etriques sont plus petites que $1$, est un entier
alg\'ebrique. L'\'egalit\'e $1-\xi=-\xi(\overline{1-\xi})$
montre que $(1-\xi)/(2+i)^q$ est aussi un entier
alg\'ebrique. Il en est alors de m{\^{e}}me pour
$2/(2+i)^q=(1-\xi+1+\xi)/(2+i)^q$ ce qui est absurde puisque sa valeur
absolue en une place au-dessus de $2+i$ est $5^q>1$.\end{proof}

\begin{proof}[D\'emonstration du th\'eor\`eme~\ref{ce}] Nous
appliquons le th\'eor\`eme~\ref{Tzh} {\`a} $\overline{E}_{1/5}$ : on a
$Z_{2}(\overline{E}_{1/5})\ge e^{1/4}5^{1/10}>5^{1/4}>\sqrt2$. Par
d\'efinition de $Z_2$, les couples $(x,y)\in\overline{\mathbf{Q}}^{2}$
tels que $H_{\overline{E}_{1/5}}(x,y)\le 5^{1/4}$ appartiennent \`a un
nombre fini de droites. D'apr\`es le lemme, il existe donc $\eta>0$ tel
que, pour tout $(x,y)\in\overline{\mathbf{Q}}^2$ avec $xy\ne 0$, on a
$H_{\overline{E}_{1/5}}(x,y)\ge\sqrt{2}+\eta$. Choisissons
maintenant $q\in\mathbf{Q}$ tel que $\sqrt{2}<5^q<\sqrt{2}+\eta$,
ce qui implique $q>1/5$. Pour tous nombres alg\'ebriques $x,y$ et
toute place $v$, on a
$\|(x,y)\|_{\overline{E}_q,v}\ge\|(x,y)\|_{\overline{E}_{1/5},v}$
et donc $H_{\overline{E}_q}(x,y)\ge
H_{\overline{E}_{1/5}}(x,y)$. Si $xy\ne 0$ ces hauteurs sont
sup\'erieures {\`a} $\sqrt{2}+\eta>5^q$. Si $xy=0$ et
$(x,y)\ne(0,0)$ alors $H_{\overline{E}_q}(x,y)=5^q$. Ceci montre
que $\Lambda(\overline{E}_q,\overline{\mathbf{Q}})=5^q$. Quant
au produit tensoriel $\overline{E}_q^{\otimes 2}$, il s'identifie
{\`a} $k^{4}$ muni de sa structure hermitienne usuelle sauf en les
places $v$ au-dessus de $2+\varepsilon i$, $\varepsilon\in\{-1,1\}$,
pour lesquelles la norme est donn\'ee par le produit de Kronecker
$A_{\varepsilon}\otimes A_{\varepsilon}$~:
$$\|(x,y,z,t)\|_{\overline{E}_q\otimes\overline{E}_q,v}=\max{(|
x|_v, 5^q| x+\varepsilon y|_v,5^q|
x+\varepsilon z|_v, 5^{2q}| x+\varepsilon y+\varepsilon
z+t|_v)}.$$De la sorte on a $\Lambda(\overline{E}_q^{\otimes
2},\overline{\mathbf{Q}})\le
H_{\overline{E}_q\otimes\overline{E}_q}(1,0,0,-1)=\sqrt{2}\cdot
5^q$ et la seconde assertion du th\'eor\`eme s'ensuit.
\end{proof}

Le th\'eor\`eme~\ref{nml} s'obtient en choisissant pour $\overline{E}$
et $\overline{F}$ des sommes directes hermitiennes d'un
$\overline{E}_q$ fix\'e et d'un fibr\'e quelconque de
minimum absolu $\ge5^{q}$. En calculant le minimum d'une somme directe
comme indiqu\'e apr\`es le lemme~\ref{convexe}, nous trouvons 
$\Lambda(\overline{E},\overline{\mathbf{Q}})=
\Lambda(\overline{F},\overline{\mathbf{Q}})=5^q$ et
$\Lambda(\overline{E}\otimes\overline{F},\overline{\mathbf{Q}})\le
\Lambda(\overline{E}_q^{\otimes2},\overline{\mathbf{Q}})<5^{2q}$.

\section{Pentes et produit tensoriel}
\subsection{Actions de groupes}\label{act}
Nous donnons dans ce paragraphe un calcul de pente maximale en
pr\'esence d'une action de groupe. Nous disons qu'un \emph{groupe $G$ agit
sur un fibr\'e vectoriel ad\'elique $\overline E$} lorsque $G$ agit sur
l'espace vectoriel $E$ et que, pour tout $g\in G$ et toute place $v$ de $k$,
l'automorphisme de $E\otimes_k{\mathbf C}_v$ donn\'e par $g$ est une
isom\'etrie pour la norme $\|\cdot\|_{{\overline E},v}$. De plus, l'action est
dite :\begin{itemize}\item \emph{irr\'eductible} si les seuls sous-espaces
$G$-stables de $E$ sont $\{0\}$ et $E$ ;\item \emph{g\'eom\'etriquement
irr\'eductible} si l'action de $G$ sur $E\otimes_k\overline k$ est
irr\'eductible.\end{itemize} Avec cette terminologie, nous avons
l'\'enonc\'e suivant.

\begin{prop}\label{actgr} Soient ${\overline E}$ et ${\overline F}$
deux fibr\'es vectoriels ad\'eliques hermitiens. Soit $G$ un groupe
agissant sur ${\overline E}$ de fa\c con g\'eom\'etriquement
irr\'eductible. Alors $\overline E$ est semi-stable et
$\widehat\mu_{\mathrm{max}}({\overline E}\otimes{\overline F})
=\widehat\mu_{\mathrm{max}}({\overline E})+\widehat\mu_{\mathrm{max}}({\overline
F})$.\end{prop}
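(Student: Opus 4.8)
The plan is to first reduce the statement to its geometrically irreducible building block via base change, then exploit the rigidity that irreducibility forces on subbundles. Since the maximal slope is invariant under extension of scalars (recalled above), I may replace $k$ by a finite extension over which both the $G$-action and the relevant destabilizing subspaces are defined; in particular it suffices to treat the case where the action of $G$ on $E$ is (absolutely) irreducible over $k$ itself. First I would prove semistability of $\overline E$: let $F\subset E$ be the subspace of maximal dimension achieving $\widehat\mu_{\mathrm{max}}(\overline E)=\widehat\mu(\overline F)$. For each $g\in G$, the automorphism $g$ is an isometry at every place $v$, hence $\overline{gF}$ is isomorphic (as an adelic hermitian bundle) to $\overline F$, so $\widehat\mu(\overline{gF})=\widehat\mu(\overline F)=\widehat\mu_{\mathrm{max}}(\overline E)$. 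By uniqueness of the maximal-slope subspace of maximal dimension, $gF=F$ for all $g$, so $F$ is $G$-stable; irreducibility forces $F=E$ (it is nonzero), whence $\widehat\mu_{\mathrm{max}}(\overline E)=\widehat\mu(\overline E)$.

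Next I would handle the tensor product. The inequality $\widehat\mu_{\mathrm{max}}(\overline E\otimes\overline F)\ge\widehat\mu_{\mathrm{max}}(\overline E)+\widehat\mu_{\mathrm{max}}(\overline F)$ is the easy half (take the tensor product of slope-maximal subbundles on each side and use $\widehat\mu(\overline{E'}\otimes\overline{F'})=\widehat\mu(\overline{E'})+\widehat\mu(\overline{F'})$ from \S\ref{produitt}; note $\overline E$ semistable means we may even take $E'=E$). For the reverse inequality, extend scalars further so that the slope-maximal subbundle $\overline G\subset\overline E\otimes\overline F$ of maximal dimension is defined over $k$, and also so that $E$ stays absolutely irreducible. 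The group $G$ acts on $\overline E\otimes\overline F$ by $g\mapsto g\otimes\mathrm{id}_F$, again by isometries at every place; by the same uniqueness argument $G\otimes\mathrm{id}_F$ stabilizes the maximal-slope subspace $G$. Now view $G\subset E\otimes F=\Hom_k(E^{\mathsf v},F)$ (after fixing the identification) as a $G$-submodule, the group acting on $E^{\mathsf v}$; since $E^{\mathsf v}$ is also absolutely irreducible, a $G$-stable subspace of $\Hom(E^{\mathsf v},F)$ is of the form $\Hom(E^{\mathsf v},F_0)=E\otimes F_0$ for a unique subspace $F_0\subset F$ (this is the linear-algebra heart: decompose $\Hom(E^{\mathsf v},F)\cong E\otimes F$ as a $G$-module — it is $(\dim F)$ copies of the irreducible $E$ — so its $G$-submodules biject with subspaces of the multiplicity space $F$). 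Then $\overline G$ is, with its induced metrics, exactly $\overline E\otimes\overline{F_0}$ (the metric on $E\otimes F$ restricted to $E\otimes F_0$ is the tensor-product metric of $\overline E$ and $\overline{F_0}$, since an orthonormal basis of $E$ tensored with one of $F_0$ is orthonormal), so $\widehat\mu_{\mathrm{max}}(\overline E\otimes\overline F)=\widehat\mu(\overline E\otimes\overline{F_0})=\widehat\mu(\overline E)+\widehat\mu(\overline{F_0})\le\widehat\mu_{\mathrm{max}}(\overline E)+\widehat\mu_{\mathrm{max}}(\overline F)$, using semistability of $\overline E$ for the last step.

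The main obstacle is the representation-theoretic step identifying the $G$-submodules of $E\otimes F$. The subtlety is that $G$ is an abstract group with no finiteness or semisimplicity hypothesis, so I cannot invoke Maschke or complete reducibility directly; instead I would argue that, $E$ being absolutely irreducible, $\mathrm{End}_G(E\otimes_k\overline k)=\overline k$ (Schur), and then that for any $G$-module $W$ with $\mathrm{End}_G(W)=K$ a field, the $G$-submodules of $W^{\oplus m}=W\otimes_K K^m$ are exactly $W\otimes_K L$ for $L\subset K^m$ — this follows because a submodule $U$ is determined by, and recovered from, the $K$-subspace $\Hom_G(W,U)\subset\Hom_G(W,W^{\oplus m})=K^m$, the map $U\mapsto\Hom_G(W,U)$ being inclusion-preserving with inverse $L\mapsto\sum_{\varphi\in L}\varphi(W)$; injectivity on both sides uses that $W$ is generated as a $G$-module by any nonzero vector (irreducibility) so every submodule is a sum of copies of $W$. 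Care is needed to run this over $k$ rather than $\overline k$, but since everything in sight (the submodule $G$, the bundle structure) has already been arranged to be $k$-rational by a preliminary finite base change, and slopes are base-change invariant, there is no real loss; one just applies the $\overline k$-statement and descends the subspace $F_0$, which is canonical (hence Galois-stable) as the multiplicity space. The rest is the bookkeeping of checking that induced metrics on $E\otimes F_0$ coincide with tensor-product metrics, which is immediate from the orthonormal-basis description of $\overline E\otimes\overline F$ in \S\ref{produitt}.
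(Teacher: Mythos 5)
Your proposal is correct and its architecture is exactly that of the paper: semistability comes from the uniqueness of the destabilizing subspace (maximal slope and maximal dimension) plus the fact that each $g\in G$ acts isometrically, hence fixes it, and the tensor-product equality comes from applying the same uniqueness to $\overline E\otimes\overline F$ and then invoking the key lemma that a $G$-stable subspace $W\subset E\otimes F$ must be of the form $E\otimes F_0$. The only genuine difference is how that lemma is proved: the paper argues by a minimal counterexample, using the contractions $\mathrm{id}_E\otimes\ell$ for $\ell\in F^{\mathsf v}$, Schur's lemma over $\overline k$ (geometric irreducibility forcing the scalar $\lambda$ to lie in $k$) and a dimension count; you instead use the standard isotypic correspondence, viewing $E\otimes F$ as $\dim F$ copies of the simple module $E$ and identifying submodules with subspaces of the multiplicity space via $U\mapsto\Hom_G(E,U)$, correctly observing that no semisimplicity hypothesis on $G$ is needed (a sum of simple submodules is semisimple) and that geometric irreducibility is what guarantees $\mathrm{End}_G(E)=k$ rather than a larger division algebra. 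Both routes rest on the same Schur-type input; yours is more modular and standard, the paper's is self-contained and works directly over $k$. On that last point, your preliminary base changes are superfluous: the destabilizing subspaces are by definition $k$-subspaces and the whole argument runs over $k$ (invariance of the maximal slope under extension is only needed to state the result robustly), and your final check that the induced metric on $E\otimes F_0$ is the tensor-product metric is a welcome precision that the paper leaves implicit. One cosmetic caveat: you use the letter $G$ both for the acting group and for the destabilizing subspace of $E\otimes F$, which should be disentangled.
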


Nous utilisons dans la d\'emonstration le lemme ci-dessous de
th\'eorie des repr\'esentations.

\begin{lemma} Soient $E$ et $F$ deux espaces vectoriels de dimension
finie sur un corps $k$. Soient $G$ un groupe agissant sur $E$ de
mani\`ere g\'eom\'etriquement irr\'eductible et $W$ un sous-espace
$G$-stable de $E\otimes F$. Alors il existe un sous-espace $F_0$ de
$F$ tel que $W=E\otimes F_0$.\end{lemma}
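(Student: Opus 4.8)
The statement is the classical fact that for a geometrically irreducible representation, every subrepresentation of $E\otimes F$ has the form $E\otimes F_0$. My plan is to reduce immediately to the case where $k=\overline k$ is algebraically closed, since the hypothesis is about $E\otimes_k\overline k$ and the conclusion (existence of $F_0$) descends: if $W\otimes_k\overline k=E\otimes_k(\overline k\otimes_k F_0')$ for some $\overline k$-subspace, one checks that $F_0:=\{f\in F\mid E\otimes f\subset W\}$ works, since $W$ is determined by its scalar extension and the formation of $F_0$ commutes with the (faithfully flat) extension $k\to\overline k$. So from now on assume $k$ algebraically closed and $E$ an absolutely irreducible $k[G]$-module.

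The core step is to produce, for each vector $w\in W$, a ``coefficient space'' inside $F$. Fix a basis $f_1,\dots,f_m$ of $F$ and write any element of $E\otimes F$ uniquely as $\sum_{j=1}^m e_j\otimes f_j$ with $e_j\in E$; the maps $p_j\colon E\otimes F\to E$, $\sum e_i\otimes f_i\mapsto e_j$, are $G$-equivariant (with $G$ acting trivially on the $F$-factor). For $w\in W$ set $E_w:=\mathrm{vect}(p_1(w),\dots,p_m(w))\subset E$; this is a $G$-stable subspace contained in the image of $W$ under the projections, hence $E_w$ is $0$ or $E$ by irreducibility. The plan is to show $W=E\otimes F_0$ where $F_0$ is spanned by suitable coordinate vectors. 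Concretely, pick $w\in W$ with $E_w=E$; then some linear combination of the $p_j(w)$ is any prescribed vector, and using $G$-equivariance together with the fact that $\Hom_{k[G]}(E,E)=k$ (Schur, using absolute irreducibility) one shows that the subspace $\{f\in F : E\otimes f\subseteq W\}$ is large enough.

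More precisely, the cleanest route: consider $W$ as a $k[G]$-submodule of $E\otimes F\cong E^{\oplus m}$. Since $E$ is absolutely irreducible, $\mathrm{End}_{k[G]}(E^{\oplus m})\cong M_m(k)$, and every $k[G]$-submodule of $E^{\oplus m}$ is of the form $E\otimes F_0$ for a linear subspace $F_0\subseteq k^m\cong F$ — this is the standard description of submodules of isotypic modules via the endomorphism ring. I would either invoke this directly or reprove it: decompose $E^{\oplus m}$ into its isotypic component, note $W$ is a submodule, use that the multiplicity space is canonically $F$, and that submodules correspond bijectively to subspaces of the multiplicity space $F$ via $F_0\mapsto E\otimes F_0$ and $W\mapsto \Hom_{k[G]}(E,W)$. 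The main obstacle is making the identification of the multiplicity space with $F$ itself (not just an abstract $m$-dimensional space) canonical and compatible with the original tensor decomposition; this is handled by the evaluation map $E\otimes\Hom_{k[G]}(E,W)\to W$, which is an isomorphism onto $W$ precisely because $E$ is absolutely irreducible, and $\Hom_{k[G]}(E,W)\subseteq\Hom_{k[G]}(E,E\otimes F)=\Hom_{k[G]}(E,E)\otimes F=F$ via Schur's lemma. Taking $F_0:=\Hom_{k[G]}(E,W)\subseteq F$ then gives $W=E\otimes F_0$, and descending to the original $k$ finishes the proof.
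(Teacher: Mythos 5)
Your proposal is correct, but it follows a genuinely different route from the paper. You first reduce to $k=\overline k$ by a descent argument (taking $F_0=\{f\in F\mid E\otimes f\subset W\}$, whose formation commutes with the field extension), and then invoke the standard structure theory of isotypic semisimple modules: $E\otimes F\simeq E^{\oplus m}$ is a finite sum of copies of the absolutely irreducible module $E$, so any submodule $W$ is again $E$-isotypic, and the evaluation map on $F_0:=\Hom_{k[G]}(E,W)\subset\Hom_{k[G]}(E,E\otimes F)\simeq F$ (Schur) identifies $W$ with $E\otimes F_0$. This is sound, provided you do justify (or cite) the two standard inputs you lean on: that a submodule of a finite direct sum of copies of a simple module is itself a sum of copies of that module (this, not absolute irreducibility alone, is what gives surjectivity of the evaluation map), and Schur's lemma over $\overline k$ for injectivity. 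The paper instead argues directly over $k$, by contradiction with a double minimality (first on $\dim F$, then on $\dim W$): this forces $W$ to be irreducible and not contained in any $E\otimes F_1$ with $F_1\subsetneq F$, and then each nonzero $\ell\in F^{\mathsf v}$ gives a bijective $G$-map $(\mathrm{id}_E\otimes\ell)_{|W}\colon W\to E$; comparing two such maps via Schur (where geometric irreducibility is used to get the eigenvalue $\lambda$ in $k$) shows all functionals are proportional, hence $\dim F=1$, a contradiction. Your approach is more conceptual and identifies $F_0$ canonically as a multiplicity space, at the cost of the base-change/descent step and of the semisimplicity machinery; the paper's argument is more elementary and self-contained, never extends scalars, and uses the geometric hypothesis only at the single Schur step.
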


\begin{proof} Nous raisonnons par l'absurde et consid\'erons un espace
$F$ de dimension minimale pour lequel le lemme est faux (\`a $E$
fix\'e). Nous choisissons ensuite un sous-espace $W$ de dimension
minimale qui viole l'\'enonc\'e. Ceci entra\^\i ne d'une part que le
seul sous-espace $F_1$ de $F$ tel que $W\subset E\otimes F_1$ est $F$
lui-m\^eme et d'autre part que $W$ est irr\'eductible pour l'action de
$G$ : s'il contenait un sous-espace strict non nul et $G$-stable
celui-ci serait de la forme $E\otimes F_2$ et $W/(E\otimes F_2)\subset
E\otimes(F/F_2)$ fournirait un contre-exemple de dimension
inf\'erieure. Par cons\'equent si $\ell\in F^{\mathsf v}$ alors
$\mathrm{id}_E\otimes\ell$ induit un $G$-morphisme $W\to E$ qui ne peut
\^etre que nul ou bijectif. S'il est nul $W\subset E\otimes\Ker\ell$ donne
$\Ker\ell=F$ donc $\ell=0$. Par suite, pour tous
$\ell,\ell'\in F^{\mathsf v}\setminus\{0\}$, nous obtenons un $G$-automorphisme
$\varphi=(\mathrm{id}_E\otimes\ell')_{|W}\circ(\mathrm{id}_E\otimes
\ell)_{|W}^{-1}$ de $E$. Par le lemme de Schur, $\varphi$ s'\'ecrit
$\lambda\mathrm{id}_E$ pour $\lambda\in k$ (c'est ici qu'intervient
l'hypoth\`ese d'irr\'eductibilit\'e sur $\overline k$ : il existe
$\lambda\in\overline k$ tel que $\varphi_{\overline
k}-\lambda\mathrm{id}_{E\otimes\overline k}$ est non inversible donc nul
donc $\varphi_{\overline k}=\lambda\mathrm{id}_{E\otimes\overline k}$ puis
$\lambda\in k$). Nous en d\'eduisons
$(\mathrm{id}_E\otimes\ell')_{|W}=\lambda(\mathrm{id}_E\otimes\ell)_{|W}$ puis
$W\subset E\otimes\Ker(\ell'-\lambda\ell)$ et, comme ci-dessus, cela
entra\^\i ne $\ell'-\lambda\ell=0$. On conclut $\dim F=1$ mais comme $\dim
W=\dim E$ l'inclusion $W\subset E\otimes F$ ne peut pas \^etre stricte
et c'est la contradiction cherch\'ee.\end{proof}

L'autre ingr\'edient de notre d\'emonstration est l'existence, pour un
fibr\'e ad\'elique hermitien ${\overline E}$, d'un unique sous-espace $V$ de
pente et dimension maximales (voir \cite[lemme 5.12]{rendiconti})
parfois appel\'e sous-espace \emph{d\'estabilisant}. En particulier, si un
groupe $G$ agit sur ${\overline E}$, ceci montre imm\'ediatement que $V$ est
$G$-stable (si $g\in G$ le sous-espace $g(V)$ a m\^eme pente et m\^eme
dimension que $V$). On en d\'eduit donc (voir \cite[proposition
5.17]{rendiconti}) que si l'action est irr\'eductible alors
${\overline E}$ est semi-stable (car $V=E$). Ceci permet \'egalement
de voir que la pente maximale est invariante par extension de corps
{\em via} l'action d'un groupe de Galois.

\begin{proof}[D\'emonstration de la proposition~\ref{actgr}] Soit $W$ l'unique
sous-espace de $E\otimes F$ avec $\widehat\mu(\overline W)=\widehat\mu_{\mathrm
{max}}({\overline E}\otimes{\overline F})$ et de
dimension maximale parmi les sous-espaces de m\^eme pente. Par
unicit\'e $W$ est $G$-stable donc par le lemme il est de la forme
$E\otimes F_0$. Par suite nous
avons $$\widehat\mu_{\mathrm{max}}({\overline E}\otimes{\overline
F})=\widehat\mu(\overline W)=\widehat\mu({\overline E})+\widehat\mu({\overline
{F_0}})\le\widehat\mu_{\mathrm{max}}({\overline E})+\widehat\mu_{\mathrm
{max}}({\overline F})$$ et le r\'esultat en d\'ecoule car
l'in\'egalit\'e contraire est toujours vraie.\end{proof}

Comme application directe, nous voyons que $\mathbf{A}_n$ est
semi-stable et que, pour tout
fibr\'e ad\'elique hermitien $\overline{E}$ sur un corps de nombres, on a
$$\widehat{\mu}_{\mathrm{max}}(\mathbf{A}_n\otimes\overline{E})=
\widehat{\mu}_{\mathrm{max}}(\mathbf{A}_n)+\widehat{\mu}_{\mathrm{max}}
(\overline{E}).$$
En effet le groupe sym\'etrique
$\mathfrak{S}_{n+1}$ sur $n+1$ \'el\'ements agit sur $\mathbf{A}_n$ de
mani\`ere g\'eo\-m\'e\-tri\-que\-ment irr\'eductible (en permutant les
coordonn\'ees).

\subsection{} Le but de ce paragraphe est de d\'emontrer le
th\'eor\`eme~\ref{prin} et ses cons\'equences directes.

\begin{proof}[D{\'e}monstration du th{\'e}or{\`e}me~\ref{prin}] Soit
$K$ une extension finie de $k$. On choisit 
$x=\sum_{i=1}^{\ell}{e_i\otimes f_i}\in E\otimes F\otimes K$ tel
que i)
$H_{\overline{E}\otimes\overline{F}}(x)=\Lambda(\overline{E}\otimes\overline{F},K)$
et ii) l'entier $\ell$ est minimal pour cette propri\'et\'e. La
minimalit\'e de $\ell$ implique que les espaces vectoriels
$E_1:=\vect_{K}(e_1,\ldots,e_{\ell})$ et
$F_1:=\vect_{K}(f_1,\ldots,f_{\ell})$ sont de dimension
$\ell$. Montrons que\footnote{L'argument qui suit est classique. On
le trouve en substance au chapitre~$7$ du livre de
Kitaoka~\cite{Kitaoka}.} 
\begin{equation}\label{ineqf}H_{\overline{E}\otimes\overline{F}}(x)\ge
\sqrt{\ell}\exp{\left\{-\left(\widehat{\mu}(\overline{E_1})
+\widehat{\mu}(\overline{F_1})\right)\right\}}.\end{equation}
On le fait place par place. On choisit des bases orthonorm\'ees de
$\overline{E_1}$ et $\overline{F_1}$ en une place $v$ et l'on
note $X_v$ (\emph{resp}. $Y_v$) la matrice de $(e_1,\ldots,e_{\ell})$
(\emph{resp}. $(f_1,\ldots,f_{\ell})$) dans les bases
orthonorm\'ees choisies. Alors $\| x
\|_{\overline{E}\otimes\overline{F},v}$ vaut $|
{}^{\mathrm{t}}X_vY_v|_{2,v}$, \emph{i.e.} la racine carr\'ee de
la somme des carr\'es des valeurs absolues des coefficients de
${}^{\mathrm{t}}X_vY_v$ (norme de Hilbert-Schmidt) si $v$ est
archim\'edienne et le maximum des valeurs absolues des
coefficients de ${}^{\mathrm{t}}X_vY_v$ si $v$ est
ultram\'etrique. Or, comme cons\'equence de l'in\'egalit\'e
arithm\'etico-g\'eom\'etrique dans le cas archim\'edien et
de la simple in\'egalit\'e ultram\'etrique dans le cas
ultram\'etrique, on
a \begin{equation*}\left|\det X_v\right|_v^{1/\ell}
\left|\det Y_v\right|_v^{1/\ell}
=\left|\det{}^{\mathrm{t}}X_vY_v\right|_v^{1/\ell}\le\|
x\|_{\overline{E}\otimes\overline{F},v}\times\begin{cases}\ell^{-1/2}
& \text{si $v\mid\infty$},\\ 1 & \text{si
$v\nmid\infty$}.\end{cases}\end{equation*} On obtient alors~\eqref{ineqf}
par produit car la matrice ad\'elique $(X_v)_{v\in V(K)}$ d\'efinit la
structure hermitienne de $\overline{E_1}$ ({\em idem} pour les $Y_v$ et
$\overline{F_1}$). Ceci \'etant \'etabli, on applique le
th\'eor\`eme~\ref{Tzhi} \`a $\overline{E_1}$ :
$\Lambda(\overline{E_1},\overline{\mathbf{Q}})\le\sqrt{\ell}
\exp{\left\{-\widehat{\mu}(\overline{E_1})\right\}}$. En
majorant $\widehat{\mu}(\overline{F_1})$ par
$\widehat{\mu}_{\mathrm{max}}(\overline{F})$ et
$\Lambda(\overline{E},\overline{\mathbf{Q}})$ par
$\Lambda(\overline{E_1},\overline{\mathbf{Q}})$, on
obtient $$\Lambda(\overline{E},\overline{\mathbf{Q}})
e^{-\widehat{\mu}_{\mathrm{max}}(\overline{F})}
\le\Lambda(\overline{E}\otimes\overline{F},K)$$ pour
toute extension $K/k$. Ceci permet de conclure.
\end{proof}

\begin{rema} Nous venons de d\'emontrer le th\'eor\`eme~\ref{prin} \`a
l'aide du th\'eor\`eme~\ref{Tzhi}. Inversement, ce dernier est contenu
dans le  th\'eor\`eme~\ref{prin} comme le montre l'argument suivant
(inspir\'e de celui du  th\'eor\`eme 4.6 de \cite{bc}) : soit $V$ le
sous-espace d\'estabilisant de $E$ ; on a
$\Lambda(\overline{V}\otimes\overline{V}^{\mathsf{v}},
\overline{\mathbf{Q}})\le\sqrt{\dim
V}\le\sqrt n$ en consid\'erant l'identit\'e ; le
th\'eor\`eme~\ref{prin} donne
$\Lambda(\overline{V}\otimes\overline{V}^{\mathsf{v}},\overline{\mathbf
Q})\ge\Lambda(\overline{V},\overline{\mathbf{Q}})
e^{-\widehat{\mu}_{\mathrm{max}}(\overline{V}^{\mathsf{v}})}\ge\Lambda(\overline{E},
\overline{\mathbf{Q}})e^{\widehat{\mu}_{\mathrm{max}}(\overline{E})}$ 
car 
$\widehat{\mu}_{\mathrm{max}}(\overline{V}^{\mathsf{v}})=\widehat{\mu}
(\overline{V}^{\mathsf{v}})=-\widehat{\mu}(\overline{V})
=-\widehat{\mu}_{\mathrm{max}}(\overline{E})$
par semi-stabilit\'e de $\overline V$ et de son dual. On
obtient ainsi $\Lambda(\overline{E},\overline{\mathbf{Q}})
e^{\widehat{\mu}_{\mathrm{max}}(\overline{E})}\le\sqrt{n}$.
\end{rema}

Nous d\'eduisons de cet \'enonc\'e une version l\'eg\`erement plus
pr\'ecise du corollaire~\ref{cor}.

\begin{coro}\label{theoprecis}Soient $N\ge 1$ un entier naturel et
$\overline{E}_1,\ldots,\overline{E}_{N}$ des fibr\'es
ad\'eliques hermitiens, de dimensions respectives
$n_1,\ldots,n_{N}\ge 1$. Alors on a 
$$\Lambda\left(\overline{E}_{1}\otimes\cdots\otimes
\overline{E}_{N},\qb\right)^{-1}
\prod_{i=1}^N\Lambda(\overline{E}_i,\qb)
\le\exp{\left(\frac{1}{2}\sum_{i=2}^{N}{(H_{n_{i}}-1)}\right)}$$
et
$$\widehat{\mu}_{\mathrm{max}}
(\overline{E}_1\otimes\cdots\otimes\overline{E}_{N})\le\left
(\sum_{i=1}^{N}{\widehat{\mu}_{\mathrm{max}}(\overline{E}_i)}
\right)+\frac1{2}(H_{n_1\cdots n_{N}}-1).$$\end{coro}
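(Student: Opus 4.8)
The plan is to reduce both inequalities to a single ``master bound'' obtained by iterating Theorem~\ref{prin} asymmetrically. For $1\le j\le N$ put $\overline{G}_j:=\overline{E}_1\otimes\cdots\otimes\overline{E}_j$ (the hermitian tensor product being associative up to isometry, as is visible from the characterization by orthonormal bases of \S\ref{produitt}), so that $\overline{G}_1=\overline{E}_1$ and $\overline{G}_N$ is the bundle appearing in the statement. Applying Theorem~\ref{prin} with $\overline{E}=\overline{G}_j$ and $\overline{F}=\overline{E}_{j+1}$ gives $\Lambda(\overline{G}_j,\qb)\,e^{-\widehat{\mu}_{\mathrm{max}}(\overline{E}_{j+1})}\le\Lambda(\overline{G}_{j+1},\qb)$ for $j=1,\ldots,N-1$, and iterating these yields
$$\Lambda(\overline{G}_N,\qb)\ \ge\ \Lambda(\overline{E}_1,\qb)\,\exp\left(-\sum_{j=2}^{N}\widehat{\mu}_{\mathrm{max}}(\overline{E}_j)\right).$$
The point is to keep the growing partial product $\overline{G}_j$ in the first slot of Theorem~\ref{prin} at every step: only the maximal slope of a single original factor $\overline{E}_j$ ever enters, so the slope of a tensor product is never needed.

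For the first inequality I would feed into this bound the absolute Siegel lemma of \S\ref{Pzh} applied to each $\overline{E}_j$ with $2\le j\le N$: from $\Lambda(\overline{E}_j,\qb)\,e^{\widehat{\mu}_{\mathrm{max}}(\overline{E}_j)}\le e^{(H_{n_j}-1)/2}$ one gets $e^{-\widehat{\mu}_{\mathrm{max}}(\overline{E}_j)}\ge\Lambda(\overline{E}_j,\qb)\,e^{-(H_{n_j}-1)/2}$, and substitution gives
$$\Lambda(\overline{G}_N,\qb)\ \ge\ \left(\prod_{j=1}^{N}\Lambda(\overline{E}_j,\qb)\right)\exp\left(-\frac12\sum_{j=2}^{N}(H_{n_j}-1)\right),$$
which is the desired inequality after dividing by $\Lambda(\overline{G}_N,\qb)$ and by the exponential.

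For the second inequality I would use only the lower half of the absolute Siegel lemma (equivalently the lower bound of Theorem~\ref{Tzhi}) for the single factor $\overline{E}_1$, i.e. $\Lambda(\overline{E}_1,\qb)\ge e^{-\widehat{\mu}_{\mathrm{max}}(\overline{E}_1)}$; inserted into the master bound this yields $\Lambda(\overline{G}_N,\qb)\ge\exp\big(-\sum_{j=1}^{N}\widehat{\mu}_{\mathrm{max}}(\overline{E}_j)\big)$, that is $-\log\Lambda(\overline{G}_N,\qb)\le\sum_{j=1}^{N}\widehat{\mu}_{\mathrm{max}}(\overline{E}_j)$. Then the upper half of the absolute Siegel lemma applied to $\overline{G}_N$, which has dimension $n_1\cdots n_N$, gives $\widehat{\mu}_{\mathrm{max}}(\overline{G}_N)\le\frac12(H_{n_1\cdots n_N}-1)-\log\Lambda(\overline{G}_N,\qb)$; adding the last two inequalities gives the claimed bound on $\widehat{\mu}_{\mathrm{max}}(\overline{E}_1\otimes\cdots\otimes\overline{E}_N)$. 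I do not anticipate a genuine obstacle: every step quotes a result already proved, and the only point requiring care is the order of the iteration producing the master bound --- the ``unbalanced'' choice is exactly what makes the error term a single harmonic number $H_{n_1\cdots n_N}$ (resp. $\sum_{i\ge2}H_{n_i}$) rather than a coarser quantity.
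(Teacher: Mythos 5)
Your proof is correct and follows the route the paper itself intends: iterate Theorem~\ref{prin} with the growing partial tensor product in the first slot to get the bound $\Lambda(\overline{E}_1\otimes\cdots\otimes\overline{E}_N,\qb)\ge\Lambda(\overline{E}_1,\qb)\exp\bigl(-\sum_{j=2}^N\widehat{\mu}_{\mathrm{max}}(\overline{E}_j)\bigr)$, then combine with the sharpened absolute Siegel lemma of \S\ref{Pzh} (the $e^{(H_n-1)/2}$ form) applied to each factor for the first inequality, and to the full tensor product together with the trivial lower bound $-\log\Lambda\le\widehat{\mu}_{\mathrm{max}}$ for the second. This matches the paper's derivation (which passes through Corollaire~\ref{corominima} as your intermediate step), so there is nothing to add.
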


Nous pouvons \'egalement citer la cons\'equence suivante du
th\'eor\`eme~\ref{prin}.

\begin{coro}\label{corominima}
Pour tous fibr\'es ad\'eliques hermitiens
$\overline{E}_1,\ldots,\overline{E}_N$, on a
$$\exp{\left\{-\sum_{i=1}^{N}{\widehat{\mu}_{\mathrm{max}}
(\overline{E}_i)}\right\}}\le\Lambda(\overline{E}_1\otimes
\cdots\otimes\overline{E}_N,\overline{\mathbf{Q}}).$$
\end{coro}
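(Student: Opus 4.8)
Le plan est de d\'eduire l'\'enonc\'e du th\'eor\`eme~\ref{prin} par une r\'ecurrence imm\'ediate sur $N$. Le cas $N=1$ n'est autre que la minoration $e^{-\widehat{\mu}_{\mathrm{max}}(\overline{E}_1)}\le\Lambda(\overline{E}_1,\overline{\mathbf{Q}})$ contenue dans le th\'eor\`eme~\ref{Tzhi} (ou, de mani\`ere \'equivalente, dans le lemme de Siegel absolu du paragraphe~\ref{Pzh}), laquelle traduit simplement le fait qu'une droite de $E_1\otimes\overline{\mathbf{Q}}$ est en particulier un sous-espace non nul, de sorte que $-\log\Lambda(\overline{E}_1,\overline{\mathbf{Q}})\le\widehat{\mu}_{\mathrm{max}}(\overline{E}_1)$.

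Pour l'h\'er\'edit\'e, je supposerais l'in\'egalit\'e acquise pour $N-1$ fibr\'es et l'appliquerais \`a $\overline{E}_1,\ldots,\overline{E}_{N-1}$, puis j'utiliserais le th\'eor\`eme~\ref{prin} avec $\overline{E}=\overline{E}_1\otimes\cdots\otimes\overline{E}_{N-1}$ et $\overline{F}=\overline{E}_N$. Apr\`es identification de $(\overline{E}_1\otimes\cdots\otimes\overline{E}_{N-1})\otimes\overline{E}_N$ avec $\overline{E}_1\otimes\cdots\otimes\overline{E}_N$ (associativit\'e du produit tensoriel hermitien, imm\'ediate sur les bases orthonorm\'ees place par place), il viendrait
$$\Lambda(\overline{E}_1\otimes\cdots\otimes\overline{E}_N,\overline{\mathbf{Q}})\ge\Lambda(\overline{E}_1\otimes\cdots\otimes\overline{E}_{N-1},\overline{\mathbf{Q}})\,e^{-\widehat{\mu}_{\mathrm{max}}(\overline{E}_N)}\ge\exp\Big\{-\sum_{i=1}^{N}\widehat{\mu}_{\mathrm{max}}(\overline{E}_i)\Big\},$$
la derni\`ere in\'egalit\'e r\'esultant de l'hypoth\`ese de r\'ecurrence. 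On pourrait aussi, sans r\'ecurrence formelle, \'ecrire directement la cha\^ine d'in\'egalit\'es obtenue en retirant les facteurs $\overline{E}_N,\overline{E}_{N-1},\ldots,\overline{E}_2$ un par un, chaque \'etape relevant du th\'eor\`eme~\ref{prin}, et en appliquant enfin le th\'eor\`eme~\ref{Tzhi} au fibr\'e restant $\overline{E}_1$.

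Il n'y a pas ici de v\'eritable obstacle : tout le contenu est concentr\'e dans le th\'eor\`eme~\ref{prin}, lui-m\^eme cons\'equence (indirecte) du th\'eor\`eme de Zhang via l'in\'egalit\'e~\eqref{ineqf}. Le seul point \`a v\'erifier avec un minimum de soin est le passage de deux \`a $N$ facteurs, c'est-\`a-dire l'associativit\'e rappel\'ee ci-dessus ; une fois celle-ci admise, la d\'emonstration est purement formelle.
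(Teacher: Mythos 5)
Votre démonstration est correcte et suit exactement la voie prévue par le papier : le corollaire~\ref{corominima} y est présenté comme conséquence immédiate du théorème~\ref{prin}, obtenue par récurrence sur $N$ (retrait des facteurs un à un via l'associativité du produit tensoriel hermitien), la minoration finale $e^{-\widehat{\mu}_{\mathrm{max}}(\overline{E}_1)}\le\Lambda(\overline{E}_1,\overline{\mathbf{Q}})$ relevant de la partie facile du théorème~\ref{Tzhi}. Rien à redire.
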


\section{Puissances sym\'etriques}
Dans cette partie, nous \'etablissons le r\'esultat suivant.
\begin{theo}\label{pentesym}
Soient $\ell\ge 1$ un entier et $\overline{E}$ un fibr\'e
ad\'elique hermitien sur $k$, de dimension $n\ge 1$. Soit
$p(n,\ell)$ le ppcm des coefficients multinomiaux $\{\ell!/i!\,;\
i\in\mathbf{N}^n\ \text{et}\ | i|=\ell\}$ (voir
l'appendice). On
a \begin{equation*}0\le\widehat{\mu}_{\mathrm{max}}\left(\overline
{S^{\ell}(E)}\right)-\ell\widehat{\mu}_{\mathrm{max}}(\overline{E})\le
\frac1{2}\log\binom{\ell+n-1}{n-1}+\log
p(n,\ell)\end{equation*}et \begin{equation*}1\le\Lambda\left(
\overline{S^{\ell}(E)},\overline{\mathbf{Q}}\right)^{-1}\Lambda(
\overline{E},\overline{\mathbf{Q}})^{\ell}\le p(n,\ell)\exp\left(
\frac{\ell-1}{2}(H_n-1)\right).\end{equation*}\end{theo}
La d\'emonstration du th\'eor\`eme~\ref{pentesym} repose sur le
r\'esultat suivant. 
\begin{lemma}\label{lemmaquetredeux}
Avec les donn\'ees du th\'eor\`eme~\ref{pentesym}, on a
$$\Lambda(\overline{E}^{\otimes\ell},\overline{\mathbf{Q}})\le
p(n,\ell)\Lambda\left(\overline{S^{\ell}(E)},\overline{\mathbf{Q}}\right).$$
\end{lemma}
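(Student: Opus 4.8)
The statement to prove is Lemma~\ref{lemmaquetredeux}, namely $\Lambda(\overline{E}^{\otimes\ell},\overline{\mathbf{Q}})\le p(n,\ell)\Lambda(\overline{S^{\ell}(E)},\overline{\mathbf{Q}})$. The natural idea is to exhibit an injective linear map $\sigma\colon S^{\ell}(E)\to E^{\otimes\ell}$ and apply inequality~\eqref{ineqminima}, which gives $\Lambda(\overline{E}^{\otimes\ell},\overline{\mathbf{Q}})\le H(\sigma)\,\Lambda(\overline{S^{\ell}(E)},\overline{\mathbf{Q}})$; it then suffices to bound $H(\sigma)\le p(n,\ell)$. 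The map I would use is the symmetrization (or rather its unnormalized variant): send a monomial $e^{i}=e_1^{i_1}\cdots e_n^{i_n}$ (for $|i|=\ell$) to $\sum_{w}e_{w(1)}\otimes\cdots\otimes e_{w(\ell)}$, the sum over the $\ell!/i!$ distinct rearrangements of the multiset corresponding to $i$, equivalently $\sigma(e^i)=\sum_{\tau\in\mathfrak{S}_\ell}e_{\tau}$ divided by $i!$ where $e_\tau$ reorders a fixed tensor with the right multiplicities. This $\sigma$ is visibly injective (its image is the space of symmetric tensors, of the right dimension), and it is the section of the canonical projection $E^{\otimes\ell}\to S^{\ell}(E)$; crucially it is defined over $k$ with no scalars from $\overline{\mathbf{Q}}$ involved.

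\textbf{Computing the local operator norms.} The point is to bound $\|\sigma\|_v$ place by place. At an archimedean place $v$, pick an orthonormal basis $(e_1,\dots,e_n)$ of $E_v$; then the $\{e^i\}_{|i|=\ell}$ are orthogonal in $\overline{S^\ell(E)}$ with $\|e^i\|_v=\sqrt{i!/\ell!}$, while the tensors $e_{j_1}\otimes\cdots\otimes e_{j_\ell}$ are orthonormal in $E^{\otimes\ell}$. Since the $\ell!/i!$ tensors appearing in $\sigma(e^i)$ are distinct basis vectors, $\|\sigma(e^i)\|_v=\sqrt{\ell!/i!}$, and by orthogonality (different $i$'s involve disjoint families of basis tensors) one computes, for $x=\sum_i x_i e^i$, that $\|\sigma(x)\|_v^2=\sum_i |x_i|^2 (\ell!/i!)$ whereas $\|x\|_v^2=\sum_i |x_i|^2 (i!/\ell!)$; hence $\|\sigma\|_v^2 = \max_i (\ell!/i!)^2 = p_{\max}(n,\ell)^2$ where $p_{\max}$ is the \emph{largest} multinomial. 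At a finite place $v$, with an orthonormal basis the $\{e^i\}$ are orthonormal in $S^\ell(E)$ and the $\{e_{j_1}\otimes\cdots\otimes e_{j_\ell}\}$ orthonormal in $E^{\otimes\ell}$, and $\sigma(e^i)$ is a sum of $\ell!/i!$ distinct orthonormal tensors, so $\|\sigma(e^i)\|_v\le 1$ by the ultrametric inequality (equality iff $\ell!/i!$ is not divisible by the residue characteristic); thus $\|\sigma\|_v\le 1$. Taking the weighted product over all places, $H(\sigma)\le \prod_{v\mid\infty}\big(\max_i \ell!/i!\big)^{[k_v:\mathbf{Q}_v]/[k:\mathbf{Q}]}=\max_i(\ell!/i!)$, which is $\le p(n,\ell)$ since $p(n,\ell)$, being the lcm of all the multinomials $\ell!/i!$, is in particular a multiple of the largest one.

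\textbf{Where the subtlety lies.} Using the plain symmetrization gives only $\max_i \ell!/i!$, which is already stronger than the claimed $p(n,\ell)$; so in fact the lemma as stated could be proved with the weaker constant $\max_i(\ell!/i!)$. However the point of stating it with $p(n,\ell)$ is surely that at the \emph{finite} places one does better with a cleverer integral section: one wants a $k$-linear $\sigma$ whose matrix in the monomial bases has entries that are \emph{integers} at every finite place (so $\|\sigma\|_v\le 1$ everywhere non-archimedean) \emph{and} whose archimedean norm is controlled by $p(n,\ell)$ rather than by the ad hoc symmetrization. The standard trick is to rescale: instead of $e^i\mapsto (\ell!/i!)\cdot(\text{average of rearrangements})$, use $e^i\mapsto \big(p(n,\ell)/(\ell!/i!)\big)\cdot\sigma_0(e^i)$ is \emph{not} what one wants; rather one takes the section whose image-norm at archimedean places is uniformly $\sqrt{\ell!/i!}\cdot(\ell!/i!)$ but then divides through by a global factor. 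The honest way: let $\sigma$ be defined by $\sigma(e^i)=\frac{i!}{\ell!}\sum_{\tau\in\mathfrak{S}_\ell}\tau\cdot(e_1^{\otimes i_1}\otimes\cdots)$, i.e. the genuine symmetrizing projector's section normalized to be the \emph{orthogonal} section; its archimedean operator norm is then $1$, but its matrix entries over $k$ have denominators dividing $\ell!$, and multiplying by $p(n,\ell)$ clears exactly these denominators at every finite place while scaling the archimedean norm by $p(n,\ell)$. This is the step that requires the lcm rather than any single multinomial, and verifying that $p(n,\ell)$ (and not a smaller integer) is the exact common denominator needed at all finite places simultaneously — i.e. that $p(n,\ell)\cdot(i!/\ell!)\in\mathbf{Z}$ for all $i$ with $|i|=\ell$, which is precisely the definition of the lcm of the $\ell!/i!$ — is the crux, and is where one invokes the elementary arithmetic of multinomial coefficients (deferred to the appendix). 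I would therefore: (1) write down this normalized section $\sigma$ explicitly; (2) check archimedean operator norm $=1$ using orthonormality of symmetric tensors (this recovers exactly the factor in the quotient metric defining $\overline{S^\ell(E)}$); (3) check that $p(n,\ell)\sigma$ has matrix in $\mathrm{M}(\mathcal{O}_k[1/D])$ for suitable $D$ coprime to everything — more precisely that $\|p(n,\ell)\sigma\|_v\le 1$ at all finite $v$ — using that $p(n,\ell)(i!/\ell!)$ is an integer; (4) assemble via \eqref{ineqminima} applied to $p(n,\ell)\sigma$, getting $H(p(n,\ell)\sigma)=p(n,\ell)\cdot H(\sigma)=p(n,\ell)$, hence the claim. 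The main obstacle is step (3): making sure the bound at finite places is genuinely $1$ after multiplication by $p(n,\ell)$ and not something larger, which rests entirely on the integrality statement about the lcm of multinomial coefficients.
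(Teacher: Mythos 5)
Your first computation contains a genuine gap: the map $e^{i}\mapsto\sum(\text{distinct rearrangements})$ is \emph{not} a canonical map $S^{\ell}(E)\to E^{\otimes\ell}$ — it depends on the basis used to define it (already for $\ell=2$: in the basis $f_1=e_1+e_2$, $f_2=e_2$ the vector $f_1^2$ is sent to $f_1\otimes f_1$, which differs from the image of $e_1^2+2e_1e_2+e_2^2$ under the map defined in the basis $(e_1,e_2)$). Consequently you cannot evaluate $\|\sigma\|_v$ by choosing at each place an orthonormal basis of $E\otimes\mathbf{C}_v$ and pretending $\sigma$ is given there by the same formula: the single global basis defining $\sigma$ is in general orthonormal at no place, let alone at all of them. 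So the asserted stronger bound with $\max_{|i|=\ell}\ell!/i!$ in place of $p(n,\ell)$ is not established, and your reading of the lcm is backwards: since $p(n,\ell)$ is a common multiple of all the multinomials, $p(n,\ell)\ge\max_{|i|=\ell}\ell!/i!$, so the lcm is not a refinement obtained by cleverness at the finite places; it is exactly what the canonical section produces, because at different primes $v$ different multinomials realize $\max_i|i!/\ell!|_v=|p(n,\ell)|_v^{-1}$, and the product over all finite $v$ aggregates to the lcm.

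Your second construction, once normalized correctly, is essentially the paper's proof: the \emph{canonical} symmetrizing section $s(x_1\cdots x_\ell)=\frac1{\ell!}\sum_{\tau\in\mathfrak{S}_\ell}x_{\tau(1)}\otimes\cdots\otimes x_{\tau(\ell)}$ (which on monomials gives $s(e^{i})=\frac{i!}{\ell!}\sum(\text{distinct rearrangements})$ — your formula $\frac{i!}{\ell!}\sum_{\tau\in\mathfrak{S}_\ell}\tau\cdot(\cdots)$ carries a spurious extra factor $i!$) is basis-free, so its local norms may legitimately be computed in an orthonormal basis at each place: it is an isometry at archimedean places and has norm $\max_{|i|=\ell}|i!/\ell!|_v=|p(n,\ell)|_v^{-1}$ at finite places, whence $H(s)=p(n,\ell)$ and \eqref{ineqminima} concludes; the paper realizes this same section by duality, as $\iota\circ\theta$ through $\left(S^{\ell}(E^{\mathsf{v}})\right)^{\mathsf{v}}$, and checks the integrality $p(n,\ell)\,i!/\ell!\in\mathbf{Z}$ only through $|i!/\ell!|_v\le|p(n,\ell)|_v^{-1}$, not via any appendix input. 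One more slip to repair: ``$H(p(n,\ell)\sigma)=p(n,\ell)H(\sigma)=p(n,\ell)$'' contradicts the product formula, since multiplying a map by a nonzero rational does not change $H$; the correct bookkeeping is place by place ($\|p(n,\ell)s\|_v\le1$ at finite $v$, $=p(n,\ell)$ at archimedean $v$), or equivalently $H(s)=p(n,\ell)$ directly.
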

\begin{proof}
La projection canonique $(E^{\mathsf{v}})^{\otimes\ell}\to
S^{\ell}(E^{\mathsf{v}})$ fournit une injection
isom\'etrique $$\iota\colon \left(S^{\ell}(E^{\mathsf{v}})
\right)^{\mathsf{v}}\hookrightarrow\left((E^{\mathsf{v}})^{\otimes\ell}
\right)^{\mathsf{v}}\simeq E^{\otimes\ell}.$$ Soit
$\theta\colon S^{\ell}(E)\to\left(S^{\ell}(E^{\mathsf{v}})\right)^{\mathsf{v}}$
l'application lin\'eaire d\'efinie de la mani\`ere suivante~:
soient $x_1\cdots x_{\ell}\in S^{\ell}(E)$ ($x_i\in E$) et
$\varphi_1\cdots\varphi_{\ell}\in S^{\ell}(E^{\mathsf{v}})$
($\varphi_i\in E^{\mathsf{v}}$). Posons $$\left(\theta(x_1\cdots
x_{\ell})\right)(\varphi_1\cdots\varphi_{\ell}):=\frac1{\ell
!}\sum_{\sigma\in\mathfrak{S}_{\ell}}\prod_{i=1}^{\ell}
\varphi_i(x_{\sigma(i)})$$ ($\mathfrak{S}_{\ell}$
est le groupe sym\'etrique de $\{1,\ldots,\ell\}$). Cette formule a
un sens car, par sym\'etrie, elle ne d\'epend pas du choix de
l'ordre dans lequel sont les $x_i$. La d\'efinition de $\theta$ se
prolonge naturellement {\`a} des \'el\'ements quelconques de
$S^{\ell}(E)$ et $S^{\ell}(E^{\mathsf{v}})$ par
bilin\'earit\'e. L'application $\theta$ est un isomorphisme
d'espaces vectoriels et l'in\'egalit\'e~\eqref{ineqminima}
appliqu\'ee {\`a} $\iota\circ\theta$
donne $$\Lambda(\overline{E}^{\otimes\ell},\overline{\mathbf{Q}})\le
H(\theta)\Lambda\left(\overline{S^{\ell}(E)},
\overline{\mathbf{Q}}\right).$$ Montrons
que $H(\theta)=p(n,\ell)$. Soit $\theta_v$ le prolongement de
$\theta$ {\`a} $S^{\ell}(E)\otimes_{k}\mathbf{C}_v$. On a une
\'ecriture alternative de $\theta_v$, valable pour toute base
$(e_1,\ldots,e_n)$ de $E\otimes_{k}\mathbf{C}_v$, de base duale
$(\phi_1,\ldots,\phi_n)$, de la forme (les notations sont celles
du \S~\ref{puissancesym})~:
\begin{equation*}\label{defexplicite}\theta_v\left(\sum_{|
i|=\ell}{a_ie^i}\right)\left(\sum_{|
i|=\ell}{b_i\phi^i}\right)=\sum_{|
i|=\ell}{\frac{i!}{\ell
!}a_ib_i}\end{equation*}($a_i,b_i\in k$). Pour utiliser cette formule, 
nous choisissons une base orthonorm\'ee $(e_1,\ldots,e_n)$ de
$E\otimes\mathbf{C}_v$ et la base duale est automatiquement
orthonorm\'ee. L'application $\theta_v$ est une isom\'etrie si $v$ est
archim\'edienne. En effet, en une telle place $v$, en \'ecrivant $(i!/\ell
!)a_ib_i=\sqrt{(i!/\ell !)}a_i\times\sqrt{(i!/\ell !)}b_i$ et
en utilisant l'in\'egalit\'e de Cauchy-Schwarz, on
a$$\|\theta_v(\sum_{|
i|=\ell}{a_ie^i})\|_{\overline{S^{\ell}(E^{\mathsf{v}})^{\mathsf{v}}},v}\le\|\sum_{|
i|=\ell}{a_ie^i}\|_{\overline{S^{\ell}(E)},v}.$$ Le choix
$b_i:=\overline{a_i}$ donne l'\'egalit\'e et $\theta_v$ est
une isom\'etrie en $v$. Si $v$ est ultram\'etrique, 
les bases $(e^i)_{|i|=\ell}$ et $(\phi^i)_{| i|=\ell}$ sont
orthonorm\'ees et par in\'egalit\'e ultram\'etrique, on
a $$\|\theta\|_v=\sup_{x\in S^{\ell}(E\otimes_{k}\mathbf{C}_v)
\setminus\{0\}}{\left(\frac{\|\theta_v(x)\|_{\overline{S^{\ell}
(E^{\mathsf{v}})^{\mathsf{v}}},v}}{\|x\|_{\overline{S^{\ell}(E)},v}}\right)}
=\max_{|i|=\ell}{\left|\frac{i !}{\ell!}\right|_v}$$ (l'\'egalit\'e
est satisfaite comme on le voit en consid\'erant $x=e^i$, $i$ r\'ealisant le
maximum). Il ne reste plus qu'{\`a} observer que le dernier maximum
qui appara\^\i t vaut $| p(n,\ell)|_v^{-1}$ et que le
produit de ces nombres (renormalis\'es avec les degr\'es locaux)
sur les places ultram\'etriques $v$ de $k$ vaut $p(n,\ell)$.
\end{proof}
\begin{proof}[D\'emonstration du th\'eor\`eme~\ref{pentesym}]
La positivit{\'e} de la diff{\'e}rence des pentes maximales est bien 
connue (voir~\cite[proposition~$7.1$]{rendiconti}). 
Pour la majoration de cette diff{\'e}rence,  le corollaire~\ref{corominima} 
minore $\Lambda(\overline{E}^{\otimes\ell},\overline{\mathbf{Q}})$ par
$\exp{\{-\ell\widehat{\mu}_{\mathrm{max}}(\overline{E})\}}$ et le
th\'eor\`eme de Zhang~\ref{Tzhi} donne 
$$\Lambda\left(\overline{S^{\ell}(E)},\overline{\mathbf{Q}}\right)
\le\binom{\ell+n-1}{n-1}^{1/2}\exp{\left\{-\widehat{\mu}_{\mathrm{max}}
\overline{(S^{\ell}(E)})\right\}}.$$
On conclut avec le lemme~\ref{lemmaquetredeux}. En ce qui concerne
le minimum de $\overline{S^{\ell}(E)}$, la majoration par 
$\Lambda(\overline{E},\overline{\mathbf{Q}})^{\ell}$ s'obtient simplement en
consid{\'e}rant $x^{\ell}\in
S^{\ell}(E\otimes\overline{\mathbf{Q}})\setminus\{0\}$ 
pour $x\in(E\otimes\overline{\mathbf{Q}})\setminus\{0\}$. On a 
$$\Lambda(\overline{S^{\ell}(E)},\overline{\mathbf{Q}})\le 
H_{\overline{S^{\ell}(E)}}(x^{\ell})\le 
H_{\overline{E}^{\otimes\ell}}(x\otimes\cdots\otimes x)=
H_{\overline{E}}(x)^{\ell}$$et l'on fait ensuite tendre 
$H_{\overline{E}}(x)$ vers $\Lambda(\overline{E},\overline{\mathbf{Q}})$.
Pour la minoration, on utilise encore le lemme~\ref{lemmaquetredeux}
et l'on minore $\Lambda(\overline{E}^{\otimes\ell},\overline{\mathbf{Q}})$ 
avec le corollaire~\ref{theoprecis}.\end{proof}
Afin de juger de la finesse de cette estimation, voici un calcul de la
pente maximale d'une puissance sym\'etrique lorsque
$\overline{E}=(\mathbf{Q}^n,|\cdot|_2)$. 
\begin{prop}\label{propquatredeux}Soient $n,\ell\ge 1$ deux
entiers. Posons $\lambda:=[\ell/n]$. Pour toute extension
alg{\'e}brique $K$ de $\mathbf{Q}$, on a \begin{equation*}
-\log\Lambda\left(S^{\ell}(\mathbf{Q}^n,|\cdot|_2),K\right)
=\widehat{\mu}_{\mathrm{max}}\left(S^{\ell}(\mathbf{Q}^n,
|\cdot|_2)\right)=\frac1{2}\log\frac{\ell
!}{\lambda!^n\left(\lambda+1\right)^{\ell-n\lambda}}\cdotp\end{equation*}
\end{prop}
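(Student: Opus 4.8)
La proposition affirme deux égalités : d'une part le minimum absolu de $\overline{S^\ell(\mathbf{Q}^n,|\cdot|_2)}$ sur une extension algébrique quelconque $K$ est atteint (son $-\log$ coïncide avec la pente maximale), d'autre part la valeur commune est $\frac12\log\bigl(\ell!/(\lambda!^n(\lambda+1)^{\ell-n\lambda})\bigr)$ avec $\lambda=[\ell/n]$. Le plan est de traiter le problème comme un calcul explicite sur le fibré hermitien standard, en exploitant le fait que les normes sont le supremum aux places finies et la norme euclidienne (pondérée par $\sqrt{i!/\ell!}$ sur la base des monômes) à la place archimédienne. L'ingrédient combinatoire central sera la minimisation du multinomial $\ell!/i!$ sur les $i\in\mathbf{N}^n$ de somme $\ell$ : ce minimum est atteint précisément en répartissant $\ell$ le plus uniformément possible, c'est-à-dire en prenant $i$ avec $n\lambda+r=\ell$ composantes valant $\lambda$ ou $\lambda+1$ (il y a $r=\ell-n\lambda$ composantes égales à $\lambda+1$), ce qui donne exactement $\lambda!^n(\lambda+1)^{\ell-n\lambda}$ au dénominateur. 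Je vérifierais cette minimisation par l'argument standard d'échange : si deux composantes diffèrent de $\ge 2$, les rapprocher d'une unité fait strictement décroître le multinomial puisque $\frac{\ell!}{i!}$ avec $i_a\to i_a-1$, $i_b\to i_b+1$ est multiplié par $i_a/(i_b+1)<1$ dès que $i_a\ge i_b+2$.

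\textbf{Minoration de la hauteur.} Pour tout $y=\sum_{|i|=\ell}a_i e^i\in S^\ell(E\otimes\overline{\mathbf{Q}})\setminus\{0\}$, je voudrais montrer $H_{\overline{S^\ell(E)}}(y)\ge \sqrt{m}$ où $m:=\min_{|i|=\ell}\ell!/i!$ et, plus précisément, que le minimum absolu vaut $\sqrt m$. À une place archimédienne $v$ de $K$, $\|y\|_v^2=\sum_i (i!/\ell!)|a_i|_v^2\ge \frac1m\sum_i|a_i|_v^2$. À une place ultramétrique au-dessus de $\ell$ (ou plutôt au-dessus des diviseurs premiers des $i!/\ell!$), la norme supremum donne $\|y\|_v=\max_i|a_i|_v$ puisque les $e^i$ forment une base orthonormée aux places finies dans cette structure (attention : ici $\overline{S^\ell(E)}$ au sens quotient a des normes finies où les $e^i$ sont orthonormées, donc $\|y\|_v=\max_i|a_i|_v$). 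Je reprendrais alors l'argument déjà utilisé dans le texte pour $Z_i((k^n,|\cdot|_2))=\sqrt i$ : par le lemme~\ref{convexe} de convexité des hauteurs appliqué aux coordonnées $a_i$ vues comme un vecteur du fibré standard de dimension $\binom{\ell+n-1}{n-1}$, on obtient $\prod_v\max_i|a_i|_v^{\,d_v}\cdot\bigl(\sum_i|a_i|_v^2\bigr)^{1/2}$ à la place archimédienne, d'où $H_{\overline{S^\ell(E)}}(y)\ge \sqrt{s}/\sqrt{m}$ où $s$ est le nombre de coordonnées non nulles de $y$... mais ceci n'est pas tout à fait assez fin. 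La borne effective correcte vient plutôt de : si $a_{i_0}\ne 0$ pour un $i_0$ réalisant le minimum $m$, on peut rendre $H_{\overline{S^\ell(E)}}(x^\ell)$ optimal en choisissant $x=e_j$ convenablement. Concrètement, $H_{\overline{S^\ell(E)}}(e_1^{i_1}\cdots)$ pour le monôme pur... non : il faut prendre $x=\sum\zeta_j e_j$ avec des racines de l'unité, de sorte que $x^\ell=\sum_{|i|=\ell}\binom{\ell}{i}\zeta^i e^i$ (en notation multinomiale $\binom{\ell}{i}=\ell!/i!$), et $\|x^\ell\|_v^2=\sum_i (i!/\ell!)(\ell!/i!)^2=\sum_i \ell!/i!$ aux places archimédiennes tandis qu'aux places finies $\|x^\ell\|_v=\max_i|\ell!/i!|_v=1$ (le max des $|\binom{\ell}{i}|_v$ vaut $1$ car le pgcd des multinomiaux est $1$). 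Cela donne $H_{\overline{S^\ell(E)}}(x^\ell)=\bigl(\sum_{|i|=\ell}\ell!/i!\bigr)^{1/2}$, ce qui est \emph{trop grand}. Il faut donc plutôt prendre un monôme $x^\ell$ avec $x=e_j$ : alors $x^\ell=e_j^\ell$, un seul $e^i$ intervient ($i=\ell\varepsilon_j$), $\|e_j^\ell\|_v=\sqrt{\ell!/\ell!}=1$ à l'archimédienne et $1$ aux finies, donc $H=1$ — mais $1\ne\sqrt m$ en général. Le vrai point est donc que le minimum \emph{n'est pas} atteint sur les puissances pures, et qu'il faut chercher un vecteur plus malin dans $S^\ell(E\otimes\overline{\mathbf{Q}})$, pas nécessairement de la forme $x^\ell$.

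\textbf{Le bon candidat et l'obstacle principal.} Le candidat naturel pour réaliser $\sqrt m$ est $y=e^{i_0}$ où $i_0$ minimise $\ell!/i_0!$ : alors $\|y\|_v=\sqrt{i_0!/\ell!}=1/\sqrt m$ aux places archimédiennes et $\|y\|_v=1$ aux places ultramétriques (base orthonormée), d'où $H_{\overline{S^\ell(E)}}(e^{i_0})=1/\sqrt m\cdot\prod_{v\mid\infty}1=$... non, $1/\sqrt m<1$, ce qui contredirait $\Lambda\ge 1$ !  Je me suis trompé de convention : le fibré $\overline{S^\ell(E)}$ de la proposition est celui du \S~\ref{puissancesym}, où \emph{aux places finies} $e^i$ est orthonormée et \emph{aux places archimédiennes} $\|e^i\|=\sqrt{i!/\ell!}\le 1$. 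Avec ces normes $\le 1$ partout, $H(e^{i_0})=(i_0!/\ell!)^{1/2}=m^{-1/2}<1$, ce qui est absurde pour un minimum. La résolution : $e^i$ orthonormée aux places finies signifie $\|e^i\|_v=1$, mais \emph{un multiple rationnel} de $e^i$ peut avoir une hauteur plus petite — non, la formule du produit empêche cela pour un vecteur de $S^\ell(E)$ sur $k=\mathbf{Q}$. Donc en fait $\Lambda(\overline{S^\ell(\mathbf{Q}^n,|\cdot|_2)},\mathbf{Q})\le H(e^{i_0})$ mais sur $\mathbf{Q}$ un vecteur $e^{i_0}$ a toutes ses coordonnées entières donc $\|e^{i_0}\|_p=1$ pour tout $p$, et à l'infini $\|e^{i_0}\|_\infty=\sqrt{i_0!/\ell!}$ : la hauteur est donc bien $<1$... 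Je dois conclure que la convention de normalisation rend en réalité $-\log\Lambda$ positif et l'énoncé $\frac12\log(\ell!/(\ldots))\ge 0$ correct puisque $\ell!\ge\lambda!^n(\lambda+1)^{\ell-n\lambda}=m^{-1}\ell!$... attendez, $m=\min\ell!/i!$ donc $\ell!/m=\lambda!^n(\lambda+1)^{r}=\max i!$, et $\frac12\log(\ell!/\max i!)=\frac12\log m\ge 0$. Donc l'énoncé dit $-\log\Lambda=\frac12\log m$, soit $\Lambda=m^{-1/2}<1$ : c'est \emph{cohérent} avec ma candidature $y=e^{i_0}$ ! Le plan final est donc : (1) borne supérieure $\Lambda\le m^{-1/2}$ en exhibant $e^{i_0}$ ; (2) borne inférieure $H_{\overline{S^\ell(E)}}(y)\ge m^{-1/2}$ pour tout $y\ne 0$ de $S^\ell(E\otimes K)$, en combinant le lemme~\ref{convexe} aux places archimédiennes (où $\|y\|_v^2=\sum_i(i!/\ell!)|a_i|_v^2\ge \frac1m\min_{a_i\ne 0}\ldots$, plus précisément $\ge m^{-1}\cdot(\text{nombre de }a_i\text{ non nuls, pondéré})$) avec l'inégalité ultramétrique, et en observant que le minimum $m^{-1/2}$ est atteint pour $y=e^{i_0}$ et seulement là ; (3) identifier cette borne avec $\widehat\mu_{\max}$ via le théorème~\ref{Tzhi} combiné au fait que $\widehat\mu(\overline{S^\ell(E)})$ se calcule par $\det$ et que l'action de $\mathfrak S_n$ par permutation des $e_j$, qui n'est pas géométriquement irréductible sur $S^\ell$, doit être remplacée par un argument direct de semi-stabilité du sous-espace déstabilisant ; (4) effectuer le calcul combinatoire de $\min\ell!/i!$. \textbf{L'obstacle principal} sera l'étape (2)–(3) : montrer que la hauteur minimale vaut exactement $m^{-1/2}$ (et pas quelque chose de plus petit obtenu via des combinaisons linéaires astucieuses de plusieurs $e^i$) — il faut un argument de convexité fin montrant que tout mélange de plusieurs monômes augmente strictement la hauteur — puis raccorder à la pente maximale, ce qui nécessitera d'identifier le sous-espace déstabilisant de $\overline{S^\ell(E)}$ comme engendré par les $e^i$ de poids $i$ réalisant le minimum $m$ et de vérifier qu'il est semi-stable de pente $\frac12\log m$.
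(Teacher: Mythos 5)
The decisive content of the proposition is the exact value of $\widehat{\mu}_{\mathrm{max}}\left(S^{\ell}(\mathbf{Q}^n,|\cdot|_2)\right)$, and your plan never establishes the upper bound $\widehat{\mu}_{\mathrm{max}}\le\frac12\log\max_{|i|=\ell}(\ell!/i!)$. Th\'eor\`eme~\ref{Tzhi} cannot give it: it only yields $\widehat{\mu}_{\mathrm{max}}\le-\log\Lambda+\frac12\log\binom{\ell+n-1}{n-1}$, and that extra term is exactly what you cannot afford; as for \og identifier le sous-espace d\'estabilisant et v\'erifier qu'il est semi-stable \fg, it is not carried out, and even if it were, semistability of one candidate subspace of slope $\frac12\log m$ does not bound the slopes of all the other subspaces. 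You yourself flag steps (2)--(3) as the main obstacle, so the core of the statement remains unproved. The paper's route avoids all of this: at every place the monomials $e^i$ are pairwise orthogonal and each line $\mathbf{Q}\cdot e^i$ is defined over $\mathbf{Q}$, so $S^{\ell}(\mathbf{Q}^n,|\cdot|_2)$ is the hermitian \emph{direct sum} of the $\binom{\ell+n-1}{n-1}$ lines $\mathbf{Q}\cdot e^i$; consequently $\widehat{\mu}_{\mathrm{max}}$ is the maximum of the slopes $\frac12\log(\ell!/i!)$ of these lines (maximal slope of a hermitian direct sum, \cite[p.~66]{rendiconti}), while $\Lambda(\cdot,K)$ is the minimum of the heights $\sqrt{i!/\ell!}$ of the $e^i$ by the remark following the lemme~\ref{convexe} --- this last point is essentially your step (2), the sound part of your plan. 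Both quantities then reduce to the same combinatorial optimization and the two equalities follow at once, without Zhang and without destabilizing subspaces.

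Your combinatorial step is moreover stated backwards. Balancing the exponents does not \emph{minimize} the multinomial coefficient: if $i_a\ge i_b+2$, replacing $(i_a,i_b)$ by $(i_a-1,i_b+1)$ multiplies $\ell!/i!$ by $i_a/(i_b+1)>1$, so the balanced multi-index \emph{maximizes} $\ell!/i!$, equivalently minimizes $i!$, whose minimal value is $\lambda!^{n}(\lambda+1)^{\ell-n\lambda}$; the minimum of the multinomial is $1$, attained at $(\ell,0,\ldots,0)$. Hence your candidate $e^{i_0}$ with $i_0$ \og minimisant $\ell!/i_0!$ \fg{} is $e_1^{\ell}$, of height $1$, not the extremal vector: the correct candidate is the balanced monomial. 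Likewise your archimedean inequality $i!/\ell!\ge 1/m$ holds only if $m$ denotes the maximal multinomial coefficient, not the minimum you defined, and $\lambda!^{n}(\lambda+1)^{\ell-n\lambda}$ is $\min_i i!$, not $\max_i i!$ as you assert. All of this is fixable by exchanging min and max throughout, but as written the exchange argument and the role of $m$ are incorrect, in addition to the missing bound on the maximal slope described above.
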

\begin{proof}
Soient $e_1,\ldots,e_n$ les vecteurs de la base
canonique de $\mathbf{Q}^n$. Alors
$S^{\ell}(\mathbf{Q}^n,|\cdot|_2)$ est la somme directe
hermitienne des droites $\mathbf{Q}.e_1^{i_1}\cdots
e_n^{i_n}$ pour $i=(i_1,\ldots,i_n)\in\mathbf{N}^n$ de
longueur $| i|=\ell$. Ainsi on
a \begin{equation}\label{eqpentemax}\widehat{\mu}_{\mathrm{max}}
(S^{\ell}(\mathbf{Q}^n,|\cdot|_2))=\max_{|
i|=\ell}{\{\widehat{\mu}(\mathbf{Q}.e_1^{i_1}\cdots
e_n^{i_n})\}}=\max_{|
i|=\ell}{\left\{\frac1{2}\log\frac{\ell!}{i
!}\right\}}\end{equation}(voir~\cite[p.~$66$]{rendiconti}). Par ailleurs
$\Lambda\left(S^{\ell}(\mathbf{Q}^n,|\cdot|_2),K\right)$
est le minimum des hauteurs des $e_1^{i_1}\cdots
e_n^{i_n}$, qui vaut $\min\sqrt{i!/\ell !}$ (voir \S~\ref{ich}).
Ceci \'etablit la premi\`ere
\'egalit\'e de la proposition~\ref{propquatredeux}. Quant {\`a}
la seconde, consid\'erons le multiplet $j=(j_1,\ldots,j_n)$
qui r\'ealise le maximum dans~\eqref{eqpentemax}. Pour tous
entiers $h,m\in\{1,\ldots,n\}$, on a $j_{h}\le j_{m}+1$ comme on le
voit en posant $j'=j-e_{h}+e_{m}$ (lorsque $j_{h}\ne 0$) et en
utilisant $j!\le j'!$. Les valeurs prises par les coordonn\'ees de
$j$ sont donc soit $a$ soit $a+1$ pour un certain entier naturel
$a$. Posons $N=\card\{h\,;\ j_{h}=a+1\}$. On a $N=\ell-na$ car
$| j|=\ell$. Si $N=n$ alors $a+1=\ell/n=\lambda$ et la
proposition est d\'emontr\'ee. Si $N\le n-1$ alors
l'\'egalit\'e $\ell=na+N$ est la division euclidienne de $\ell$
par $n$ et donc $a=\lambda$. Il suffit alors d'observer que
$j!=a!^{n-N}(a+1)!^N=a!^n(a+1)^N$ pour conclure.
\end{proof}

Le th\'eor\`eme~\ref{pentesym} et la
proposition~\ref{propquatredeux} donnent un encadrement de la quantit\'e
$\Delta(n,\ell):=\sup_{\overline{E}}{\left(\widehat{\mu}_{\mathrm{max}}
\left(\overline{S^{\ell}(E)}\right)-\ell\widehat{\mu}_{\mathrm{max}}
(\overline{E})\right)}$
o{\`u} $\overline{E}$ parcourt tous les fibr\'es ad\'eliques
hermitiens de dimension $n$. Plus pr\'ecis\'ement, on
a\begin{equation*}\label{encadrementasymp}\begin{matrix}
\widehat{\mu}_{\mathrm{max}}\left(S^{\ell}(\mathbf{Q}^n,
|\cdot|_2)\right)&\le&\Delta(n,\ell)&\le&
\frac1{2}\log\binom{\ell+n-1}{n-1}+\log p(n,\ell)\\ \parallel & & &
&\parallel \\ \frac{\ell}{2}\log n+\mathrm{o}(\ell) & \le &
\Delta(n,\ell) & \le & \ell H_{n-1}+\mathrm{o}(\ell)
\end{matrix}\end{equation*}
lorsque $\ell\to+\infty$. L'\'egalit\'e de gauche provient de la
formule de Stirling et celle de droite est d\'emontr\'ee au
\S~\ref{subsecconse} o\`u nous verrons aussi $\Delta(n,\ell)\le2\ell \log n$.
\begin{rema}
Le d\'ebut de la d\'emonstration de la
proposition~\ref{propquatredeux} montre \'egalement que la puissance
sym\'etrique $S^{\ell}(\mathbf{Q}^n,|\cdot|_2)$ est
semi-stable si et seulement si $\ell$ ou $n$ \'egale $1$. En effet,
la pente $\widehat{\mu}(S^{\ell}(\mathbf{Q}^n,|\cdot|_2))$ est
la moyenne des pentes des $\mathbf{Q}.e_1^{i_1}\cdots
e_n^{i_n}$ (la pente d'une somme directe hermitienne est une moyenne
pond\'er\'ee des pentes, voir \S~\ref{dsd}) tandis
que~\eqref{eqpentemax} montre que la pente maximale est le maximum de
ces pentes. Par cons\'equent, l'\'egalit\'e
$\widehat{\mu}_{\mathrm{max}}(S^{\ell}(\mathbf{Q}^n,|\cdot|_2))
=\widehat{\mu}(S^{\ell}(\mathbf{Q}^n,|\cdot|_2))$
n'est possible que si $i!$ reste constant sur
$\{i\in\mathbf{N}^n\,;\ | i|=\ell\}$. On voit alors que les
seules possibilit\'es sont $\ell=1$ ou $n=1$ en consid\'erant les
multiplets $(\ell,0,\ldots,0)$ et $(\ell-1,1,0,\ldots,0)$ (lorsque
$n\ge 2$).
\end{rema}

\section{Puissances ext\'erieures}
Soient $\overline{E}$ un fibr\'e ad\'elique hermitien sur $k$ de
dimension $n\ge 1$ et $\ell\in\{1,\ldots,n\}$. 

\begin{theo} On a
\begin{equation*}\ell\widehat{\mu}(\overline{E})
\le\widehat{\mu}_{\mathrm{max}}(\overline{\wedge^{\ell}E})
\le\ell\widehat{\mu}_{\mathrm{max}}(\overline{E})+\frac1{2}
\log\frac{n!}{(n-\ell)!}\end{equation*}et \begin{equation*}
-\log\Lambda(\overline{\wedge^{\ell}E},\overline{\mathbf{Q}})
\le-\ell\log\Lambda(\overline{E},\overline{\mathbf{Q}})+\frac
{\ell-1}{2}(H_n-1)+\frac1{2}\log\ell!.\end{equation*}
\end{theo}
\begin{proof}
La minoration de la pente maximale d\'ecoule de la formule donnant
la pente de $\overline{\wedge^{\ell}E}$ en fonction de celle de
$\overline{E}$ (\S~\ref{secpuissanceexterieure}). Pour la majoration, consid\'erons l'application
$\psi\colon{\wedge^{\ell}E}\to E^{\otimes\ell}$ induite par la forme
multilin\'eaire altern\'ee $$(x_1,\ldots,x_{\ell})\mapsto
\sum_{\sigma\in\mathfrak{S}_{\ell}}{\varepsilon(\sigma)x_{\sigma(1)}
\otimes\cdots\otimes x_{\sigma(\ell)}}$$
($\varepsilon(\sigma)$ est la signature de la
permutation $\sigma$). L'application $\psi$ est injective et ses
normes d'op\'erateur sont $\|\psi\|_v=1$ si $v\nmid\infty$
et $\|\psi\|_v=\sqrt{\ell !}$ si $v\mid\infty$. Par
cons\'equent, on a 
$\Lambda(\overline{E}^{\otimes\ell},\overline{\mathbf{Q}})\le\sqrt{\ell
!}\cdot\Lambda(\overline{\wedge^{\ell}E},\overline{\mathbf{Q}
})$ d'apr\`es~\eqref{ineqminima}. En utilisant le
corollaire~\ref{corominima} et la majoration 
$\Lambda(\overline{\wedge^{\ell}E},\overline{\mathbf{Q}})\le
\sqrt{\binom{n}{\ell}}\exp{\left(-\widehat{\mu}_{\mathrm{max}}
(\overline{\wedge^{\ell}E})\right)}$
du th\'eor\`eme~\ref{Tzhi}, nous obtenons la majoration
souhait\'ee de la pente maximale de $\overline{\wedge^{\ell}E}$. Si
nous appliquons plut\^ot le corollaire~\ref{theoprecis}
nous obtenons la majoration de
$-\log\Lambda(\overline{\wedge^{\ell}E},\overline{\mathbf{Q}})$ du
th\'eor\`eme.
\end{proof}
Dans la minoration de la pente maximale de $\overline{\wedge^{\ell}E}$ 
donn{\'e}e par ce th{\'e}or{\`e}me,  il n'est pas possible en g{\'e}n{\'e}ral
de remplacer $\widehat{\mu}(\overline{E})$ par
$\widehat{\mu}_{\mathrm{max}}(\overline{E})$ 
car si $\ell=n$ alors
$\widehat{\mu}_{\mathrm{max}}(\overline{\wedge^{\ell}E})
=\widehat{\mu}(\overline{\wedge^{\ell}E})=\ell\widehat{\mu}(\overline{E})$.
\par Revenons au r\'eseau de racines
$\mathbf{A}_n$ introduit au \S~\ref{an}.
\begin{prop}
Soient $n\ge 1$ et $\ell\in\{1,\ldots,n\}$ des entiers. Pour tout
fibr\'e ad\'elique hermitien $\overline{E}$ sur $\mathbf Q$, on a
$$\Lambda(\wedge^{\ell}{\mathbf{A}_n\otimes\overline{E}},\overline{
\mathbf{Q}})=\sqrt{\ell+1}\cdot\Lambda(\overline{E},\overline{\mathbf{Q}})\quad\text{et}\quad\widehat{\mu}_{\mathrm{max}}(\wedge^{\ell}\mathbf{A}_{n}\otimes\overline{E})=-\frac{\ell}{2n}\log(n+1)+\widehat{\mu}_{\mathrm{max}}(\overline{E}).$$
\end{prop}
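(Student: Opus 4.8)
The plan is to establish the two equalities by different means: the maximal slope via the action of the symmetric group $\mathfrak{S}_{n+1}$ as in \S\ref{act}, and the minimum via an explicit isometric embedding combined with a purely combinatorial lemma.

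\emph{Maximal slope.} The group $\mathfrak{S}_{n+1}$ acts on $\mathbf{A}_n$ by permuting the coordinates of $\mathbf{Q}^{n+1}$; this preserves $|\cdot|_{2,v}$ at every place, hence acts by isometries on $\mathbf{A}_n$ and therefore on $\overline{\wedge^{\ell}\mathbf{A}_n}$. Moreover this action is geometrically irreducible: the $\ell$-th exterior power of the standard $n$-dimensional representation of $\mathfrak{S}_{n+1}$ is irreducible (it corresponds to the hook partition $(n+1-\ell,1^{\ell})$), and since irreducible representations of symmetric groups are defined over $\mathbf{Q}$ it remains irreducible over $\qb$. Proposition~\ref{actgr} then tells us that $\overline{\wedge^{\ell}\mathbf{A}_n}$ is semi-stable and
$$\widehat{\mu}_{\mathrm{max}}(\wedge^{\ell}\mathbf{A}_n\otimes\overline{E})=\widehat{\mu}_{\mathrm{max}}(\overline{\wedge^{\ell}\mathbf{A}_n})+\widehat{\mu}_{\mathrm{max}}(\overline{E})=\ell\,\widehat{\mu}(\mathbf{A}_n)+\widehat{\mu}_{\mathrm{max}}(\overline{E}),$$
where the last equality uses semi-stability together with the formula $\widehat{\mu}(\overline{\wedge^{\ell}E})=\ell\,\widehat{\mu}(\overline{E})$ of \S\ref{secpuissanceexterieure}. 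It remains to note that $\widehat{\mu}(\mathbf{A}_n)=-\frac1n\log\vol(A_n\otimes\mathbf{R}/A_n)=-\frac1{2n}\log(n+1)$, since the Gram matrix of the $\mathbf{Z}$-basis $(e_i-e_{i+1})_{1\le i\le n}$ of the root lattice $A_n$ is the Cartan matrix of type $\mathrm{A}_n$, whose determinant equals $n+1$. This gives the announced value.

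\emph{Minimum.} For $I=\{i_1<\cdots<i_{\ell}\}\subseteq\{1,\ldots,n+1\}$ write $e_I=e_{i_1}\wedge\cdots\wedge e_{i_{\ell}}$. Since $\mathbf{A}_n\hookrightarrow(\mathbf{Q}^{n+1},|\cdot|_2)$ is isometric, so is $\wedge^{\ell}\mathbf{A}_n\hookrightarrow\wedge^{\ell}(\mathbf{Q}^{n+1},|\cdot|_2)$, and the latter is the hermitian direct sum of the lines $\mathbf{Q}e_I$. Tensoring with $\overline{E}$ yields an isometric embedding of $\wedge^{\ell}\mathbf{A}_n\otimes\overline{E}$ into $\bigoplus_{|I|=\ell}\overline{E}$, under which a vector $w$ maps to the family $(w_I)_I$ of its components. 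For the lower bound, Lemma~\ref{convexe} gives $H_{\wedge^{\ell}\mathbf{A}_n\otimes\overline{E}}(w)\ge\big(\sum_I H_{\overline{E}}(w_I)^2\big)^{1/2}\ge\sqrt{N(w)}\,\Lambda(\overline{E},\qb)$, where $N(w)=\#\{I:w_I\ne 0\}$, so it suffices to show $N(w)\ge\ell+1$ for every $w\ne 0$; choosing a $\qb$-basis of $E\otimes\qb$ reduces this to the scalar statement below. For the upper bound, I would take $w=(e_1-e_2)\wedge\cdots\wedge(e_{\ell}-e_{\ell+1})\otimes x$ with $x\in E\otimes\qb\setminus\{0\}$: the vectors $e_i-e_{i+1}$ are part of an orthonormal basis of $\mathbf{A}_n$ at each finite place, so the first factor has norm $1$ there, while at the archimedean place its norm is $\sqrt{\det(\text{Cartan }\mathrm{A}_{\ell})}=\sqrt{\ell+1}$; hence $H(w)=\sqrt{\ell+1}\,H_{\overline{E}}(x)$, and letting $H_{\overline{E}}(x)\to\Lambda(\overline{E},\qb)$ gives the reverse inequality, whence $\Lambda(\wedge^{\ell}\mathbf{A}_n\otimes\overline{E},\qb)=\sqrt{\ell+1}\,\Lambda(\overline{E},\qb)$.

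\emph{The combinatorial lemma (main obstacle).} The one real point is: every nonzero $w=\sum_{|I|=\ell}c_I e_I$ lying in $\wedge^{\ell}(A_n\otimes\qb)$ has at least $\ell+1$ nonzero coefficients $c_I$. I would identify $\wedge^{\ell}(A_n\otimes\qb)$ with the kernel of the contraction $\iota_{\sigma}\colon\wedge^{\ell}\qb^{n+1}\to\wedge^{\ell-1}\qb^{n+1}$ by $\sigma=\sum_i e_i^{\mathsf{v}}$, and argue by induction on $n$; the case $\ell=1$ is clear since a nonzero vector of zero coordinate sum has at least two nonzero coordinates. For $\ell\ge2$, write $w=w'+e_{n+1}\wedge w''$ with $w'\in\wedge^{\ell}\qb^{n}$ and $w''\in\wedge^{\ell-1}\qb^{n}$; expanding $\iota_{\sigma}w=0$ yields $\iota_{\sigma'}w''=0$ (so $w''\in\wedge^{\ell-1}(A_{n-1}\otimes\qb)$) and $w''=-\iota_{\sigma'}w'$ with $\sigma'=\sum_{i\le n}e_i^{\mathsf{v}}$, so in particular $w''\ne0$ forces $w'\ne0$. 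If $w''=0$ then $w=w'\in\wedge^{\ell}(A_{n-1}\otimes\qb)$ (which also forces $\ell\le n-1$) and the induction hypothesis applies to $w'$; if $w''\ne0$ the induction hypothesis applied to $w''$ gives at least $\ell$ nonzero coefficients of $w$, and at least one more comes from $w'\ne0$. In both cases $N(w)\ge\ell+1$, completing the induction. I expect the sign bookkeeping in the contraction and the organization of the base cases of the induction to be the only slightly delicate points; everything else is immediate from the results recalled above.
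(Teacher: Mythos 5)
Your proposal is correct and follows essentially the same route as the paper: Proposition~\ref{actgr} plus semi-stability of $\overline{\wedge^{\ell}\mathbf{A}_n}$ for the slope, and, for the minimum, the isometric embedding into $\bigoplus_{|I|=\ell}\overline{E}$, Lemma~\ref{convexe}, the count of at least $\ell+1$ nonzero components, and an explicit decomposable vector of height $\sqrt{\ell+1}$. The only divergences are local and harmless: you prove the counting step by induction on $n$ via the contraction by $\sigma=\sum_i e_i^{\mathsf{v}}$ (splitting off $e_{n+1}$), where the paper argues directly from the equations $(\star)$ by producing, for each $(\ell-1)$-subset $J$ of the support of a nonzero coefficient, a further nonzero coefficient $x_{J,h_J}$ with the sets $J\cup\{h_J\}$ pairwise distinct; you take $(e_1-e_2)\wedge\cdots\wedge(e_{\ell}-e_{\ell+1})$ instead of $(e_1-e_{n+1})\wedge\cdots\wedge(e_{\ell}-e_{n+1})$; and you evaluate $\widehat{\mu}(\mathbf{A}_n)$ via the covolume (Cartan determinant $n+1$) rather than via $\wedge^n\mathbf{A}_n$ and the first equality of the proposition.
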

\begin{proof}
En utilisant la base canonique $(e_1,\ldots,e_{n+1})$ de
$\mathbf{Q}^{n+1}$, nous avons une description de la puissance
ext\'erieure $(\wedge^{\ell}A_n)\otimes E$ de la mani\`ere
suivante. Soit $$x=\sum_{1\le i_1<\cdots<i_{\ell}\le
n+1}{e_{i_1}\wedge\cdots\wedge e_{i_{\ell}}\otimes
x_{i_1,\ldots,i_{\ell}}}$$ un vecteur de
$\wedge^{\ell}\mathbf{Q}^{n+1}\otimes
E\otimes\overline{\mathbf{Q}}$. Si $i_1,\ldots,i_{\ell}$ sont des
entiers quelconques de $\{1,\ldots,n\}$, posons
$x_{i_1,\ldots,i_{\ell}}:=0$ si deux indices $i_{j}$ sont \'egaux
et $x_{i_1,\ldots,i_{\ell}}:=\varepsilon(\sigma)x_{\sigma(i_1),
\ldots,\sigma(i_{\ell})}$
si $\sigma$ est la permutation de $\{i_1,\ldots,i_{\ell}\}$, de
signature $\varepsilon(\sigma)$, telle que
$\sigma(i_1)<\cdots<\sigma(i_{\ell})$. Avec cette convention, les
\'equations qui d\'ecrivent $\wedge^{\ell}(A_n)\otimes E$ sont
$(\star)\ \sum_{h=1}^{n+1}{x_{i_1,\ldots,i_{\ell-1},h}}=0$ pour tout
$(i_1,\ldots,i_
{\ell-1})$ tel que $1\le i_1<\cdots<i_{\ell-1}\le n$. Si
$x\in\wedge^{\ell}(A_n)\otimes E\otimes\overline{\mathbf
Q}\setminus\{0\}$, consid\'erons 
$x_{i_1,\ldots,i_{\ell}}$ un coefficient non nul de $x$ avec $1\le
i_1<\cdots<i_{\ell}\le n$. Pour toute partie $J$ de
$\{i_1,\ldots,i_{\ell}\}$ {\`a} $\ell-1$ \'el\'ements, les
\'equations~$(\star)$ montrent qu'il existe
$h_{J}\in\{1,\ldots,n+1\}\setminus\{i_1,\ldots,i_{\ell}\}$ tel que
$x_{J,h_{J}}\ne 0$. Comme il y a $\ell$ possibilit\'es pour $J$ et
que les ensembles $J\cup\{h_{J}\}$ sont tous distincts, il y a au
moins $\ell+1$ vecteurs $x_{i_1,\ldots,i_{\ell}}$ qui ne sont pas
nuls. En une place $v$ d'un corps de d\'efinition de $x$, la norme
de $x$ vaut $\left(\sum{\|
x_{i_1,\ldots,i_{\ell}}\|_{\overline{E},v}^{2}}\right)^{1/2}$
si $v\mid\infty$ et $\max{\|
x_{i_1,\ldots,i_{\ell}}\|_{\overline{E},v}}$ sinon. On obtient
alors la minoration
$H_{\wedge^{\ell}\mathbf{A}_n\otimes\overline{E}}(x)\ge\sqrt{\ell+1}
\cdot\Lambda(\overline{E},\overline{\mathbf{Q}})$ par le lemme de
convexit\'e~\ref{convexe}, ce qui donne
$\Lambda(\wedge^{\ell}{\mathbf{A}_n\otimes\overline{E}},\overline
{\mathbf{Q}})\ge\sqrt{\ell+1}\cdot\Lambda(\overline{E},\overline{\mathbf{Q}})$.
L'\'egalit\'e
s'obtient en choisissant $x$ de la forme
$(e_1-e_{n+1})\wedge\cdots\wedge(e_{\ell}-e_{n+1})\otimes y$ avec
$H_{\overline{E}}(y)$ qui se rapproche de
$\Lambda(\overline{E},\overline{\mathbf{Q}})$. Pour les pentes 
maximales, le groupe sym\'etrique $\mathfrak{S}_{n}$
agit encore sur $\wedge^{\ell}\mathbf{A}_{n}$ de mani{\`e}re 
g{\'e}om{\'e}triquement irr{\'e}ductible (voir~\cite[proposition~$3.12$]{fh}). 
On applique alors la proposition~\ref{actgr} qui donne 
$\widehat{\mu}_{\mathrm{max}}(\wedge^{\ell}\mathbf{A}_{n}\otimes\overline{E})
=\ell\widehat{\mu}_{\mathrm{max}}(\mathbf{A}_{n})+
\widehat{\mu}_{\mathrm{max}}(\overline{E})$. 
La pente maximale de $\mathbf{A}_{n}$ se calcule au
moyen du d{\'e}terminant de ce fibr{\'e} 
$$\widehat{\mu}_{\mathrm{max}}(\mathbf{A}_{n})=
\widehat{\mu}(\mathbf{A}_{n})=
\frac{1}{n}\widehat{\mu}(\wedge^n\mathbf{A}_n)=
-\frac{1}{n}\log\Lambda(\wedge^n\mathbf{A}_n,\overline{\mathbf Q})=
-\frac{1}{2n}\log(n+1)$$ et la proposition est d\'emontr\'ee. 
\end{proof}
\section{Appendice~: ppcm des multinomiaux}
\subsection{}
Soient $n,\ell$ deux entiers strictement
positifs. Soit \begin{equation}\label{ppcmm}p(n,\ell):=\ppcm\left(\frac{\ell
!}{i_1!\cdots i_n!}\,;\ i_{j}\in\mathbf{N}\ \text{et}\
i_1+\cdots+i_n=\ell\right).\end{equation} Dans cette partie,
nous nous proposons d'\'etablir le th\'eor\`eme suivant (le
crochet d\'esigne la partie enti\`ere).

\begin{theo}\label{thmppcm}
On a $$p(n,\ell)=\prod_{j=1}^{n-1}{\frac{\ppcm
\left(1,2,3,\ldots,\left[(\ell+n-1)/j\right]\right)}{\ell+j}}.$$
\end{theo}

Le cas $n=2$ figure d\'ej\`a dans un article de
Williams~\cite{Williams}. Nous n'avons pas trouv\'e de
r\'ef\'erence dans la litt\'erature pour le cas g\'en\'eral.
Aussi donnons-nous ci-dessous une d\'emonstration (\'el\'ementaire)
du th\'eor\`eme~\ref{thmppcm}.

Pour $n$ et $\ell$ comme ci-dessus et $x\ge1$ un nombre r\'eel, nous
utilisons les notations suivantes :
$$q(n,\ell)=\frac{(\ell+n-1)!}{\ell!}p(n,\ell),\qquad
d(x)=\ppcm(1,2,\ldots,[x]),$$ $$r(n,\ell)=\prod_{k=1}^{n-1}
d\left(\frac{\ell+n-1}{
k}\right)=\ppcm(j_1\cdots j_{n-1};\ 1\le hj_h\le
\ell+n-1),$$ $$s(n,\ell)=\ppcm(j_1\cdots j_{n-1};\ 1\le
j_{n-1}<j_{n-2}<\cdots<j_1\le\ell+n-1).$$

Nous allons montrer les divisibilit\'es $r(n,\ell)\mid s(n,\ell)\mid
q(n,\ell)\mid r(n,\ell)$ et donc l'\'egalit\'e de ces entiers
naturels. En particulier $q(n,\ell)=r(n,\ell)$ est le
th\'eor\`eme~\ref{thmppcm}.

\subsubsection*{}$\bullet$ $r(n,\ell)\mid s(n,\ell)$

Soient $j_1,\ldots,j_{n-1}$ avec $1\le hj_h\le\ell+n-1$ pour $1\le h\le
n-1$. D\'efinissons des entiers $j_h'$ par le proc\'ed\'e suivant : si
$j_1',\ldots,j_{h-1}'$ sont construits on choisit un multiple $j_h'$ de
$j_h$ avec $j_h'\not\in\{j_1',\ldots,j_{h-1}'\}$ et $1\le
j_h'\le\ell+n-1$ (ceci est possible car $1\le hj_h\le\ell+n-1$ montre qu'au
moins $h$ multiples de $j_h$ appartiennent \`a $[1,\ell+n-1]$). Alors les
$j_h'$ sont deux \`a deux distincts et $j_1\cdots j_{n-1}\mid
j_1'\cdots j_{n-1}'\mid s(n,\ell)$.

\subsubsection*{}$\bullet$ $s(n,\ell)\mid q(n,\ell)$

Soient $0=j_n<j_{n-1}<j_{n-2}<\cdots<j_1<j_0=\ell+n$. Le nombre $j_h$
divise $j_h\binom{j_{h-1}-1}{j_h}=\frac{(j_{h-1}-1)!}{(j_h-1)!
(j_{h-1}-j_h-1)!}$ donc par produit
$$j_1\ldots j_{n-1}\mid\frac{(j_0-1)!}{(j_0-j_1-1)!\cdots
(j_{n-2}-j_{n-1}-1)!(j_{n-1}-1)!}=\frac{(\ell+n-1)!}{i_1!\cdots
i_n!}\mid q(n,\ell)$$ avec $i_h=j_{h-1}-j_h-1\ge0$ de sorte que
$i_1+\cdots+i_n=j_0-j_n-n=\ell$.

\subsubsection*{}$\bullet$ $q(n,\ell)\mid r(n,\ell)$ : ceci d\'ecoule
par r\'ecurrence sur $n$ du lemme suivant (avec $q(1,\ell)=1$).

\begin{lemma} On a $q(n,\ell-1)\mid q(n,\ell)$ et $q(n,\ell)\mid
d(1+\ell/(n-1))q(n-1,\ell+1)$.\end{lemma}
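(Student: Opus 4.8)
Le plan est de traiter les deux divisibilités premier par premier. Je fixe un nombre premier $p$, je note $v_p$ la valuation $p$-adique et $s_p(m)$ la somme des chiffres de $m$ en base $p$, et j'utilise la formule de Legendre $v_p(m!)=(m-s_p(m))/(p-1)$. Comme $(\ell+1)(\ell+2)\cdots(\ell+n-1)$ est entier, on peut le sortir du ppcm : $q(n,\ell)=\ppcm\bigl((\ell+n-1)!/(i_1!\cdots i_n!)\,;\ i_j\in\mathbf{N},\ i_1+\cdots+i_n=\ell\bigr)$, d'où $v_p(q(n,\ell))=v_p((\ell+n-1)!)-m_p(n,\ell)$ avec $m_p(n,\ell):=\min\{v_p(i_1!)+\cdots+v_p(i_n!)\,;\ i_1+\cdots+i_n=\ell\}$. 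Le fait clef — qui rend l'énoncé naturel — est que $q(n-1,\ell+1)$ fait intervenir \emph{la même} factorielle, puisque $(\ell+1)+(n-1)-1=\ell+n-1$ ; ainsi $v_p(q(n-1,\ell+1))=v_p((\ell+n-1)!)-m_p(n-1,\ell+1)$. Comme de plus $v_p(d(x))=\max\{e\in\mathbf{N}\,;\ p^e\le[x]\}$, les deux divisibilités du lemme se ramènent, pour tout $p$, aux inégalités
$$m_p(n,\ell)\le m_p(n,\ell-1)+v_p(\ell+n-1)\quad\text{et}\quad m_p(n-1,\ell+1)\le m_p(n,\ell)+v_p\bigl(d(1+\ell/(n-1))\bigr).$$

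Pour la première, je partirais d'un $n$-uplet $(h_1,\ldots,h_n)$ réalisant $m_p(n,\ell-1)$ et j'appellerais $t$ le nombre minimal de chiffres égaux à $p-1$ terminant en base $p$ l'un des $h_j$ ; quitte à renuméroter, $h_1$ se termine par exactement $t$ tels chiffres, donc $v_p(h_1+1)=t$ (ajouter $1$ transforme ces chiffres en $0$ en incrémentant le suivant). Le $n$-uplet $(h_1+1,h_2,\ldots,h_n)$ est de somme $\ell$, ce qui donne $m_p(n,\ell)\le m_p(n,\ell-1)+t$. Or chaque $h_j$ se termine par au moins $t$ chiffres $p-1$, autrement dit $h_j\equiv-1\pmod{p^t}$, d'où $\ell-1=\sum_j h_j\equiv-n\pmod{p^t}$, soit $p^t\mid\ell+n-1$ et $t\le v_p(\ell+n-1)$. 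Cela établit $q(n,\ell-1)\mid q(n,\ell)$.

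Pour la seconde, je choisirais un $n$-uplet $(i_1,\ldots,i_n)$ réalisant $m_p(n,\ell)$, réordonné de sorte que $i_{n-1}$ et $i_n$ soient deux de ses plus petites composantes, et je fusionnerais ces deux-là : le $(n-1)$-uplet $(i_1,\ldots,i_{n-2},i_{n-1}+i_n+1)$, de somme $\ell+1$, fournit $m_p(n-1,\ell+1)\le m_p(n,\ell)+v_p\bigl((i_{n-1}+i_n+1)!/(i_{n-1}!\,i_n!)\bigr)$. Comme $(i_{n-1}+i_n+1)!/(i_{n-1}!\,i_n!\,1!)$ est un coefficient multinomial, cette valuation est le nombre de retenues produites par l'addition en base $p$ des trois entiers $i_{n-1}$, $i_n$ et $1$ ; par l'identité classique $\ppcm(\binom{m}{0},\ldots,\binom{m}{m})=d(m+1)/(m+1)$, appliquée à $m=i_{n-1}+i_n$ (c'est exactement le cas $n=2$ du théorème, dû à Williams, qui sert ici d'ancrage), c'est la valuation $p$-adique d'un diviseur de $d(i_{n-1}+i_n+1)$. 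Il reste donc à majorer ce nombre de retenues par $v_p(d(1+\ell/(n-1)))$, et c'est là que se concentre la difficulté, et que le facteur $d(1+\ell/(n-1))$ prend son sens. L'idée est qu'un minimiseur de $m_p(n,\ell)$ maximise aussi $\sum_j s_p(i_j)$, ce qui interdit à ses composantes d'être simultanément toutes de taille intermédiaire comparable : ou bien l'une est nulle, et un argument de tiroirs sur la plus petite composante de l'$(n-1)$-uplet restant conclut directement (sans même passer aux valuations), ou bien l'analyse de la structure d'un $n$-uplet à somme de chiffres maximale borne $i_{n-1}+i_n$ en fonction de $\ell/(n-1)$ et ramène le comptage des retenues à une vérification en base $p$, selon que $i_{n-1}+i_n+1$ franchit ou non la première puissance de $p$ strictement supérieure à $[1+\ell/(n-1)]$. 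Concilier la taille du bloc fusionné et le nombre de retenues qu'il engendre est le point technique central de la démonstration ; tout le reste est mécanique.

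Enfin, le lemme acquis, on conclut $q(n,\ell)\mid r(n,\ell)$ par récurrence sur $n$ : on a $q(1,\ell)=1$ et $r(n,\ell)=d(1+\ell/(n-1))\,r(n-1,\ell+1)$ par définition de $r$, de sorte que la seconde divisibilité et l'hypothèse de récurrence donnent $q(n,\ell)\mid d(1+\ell/(n-1))\,q(n-1,\ell+1)\mid d(1+\ell/(n-1))\,r(n-1,\ell+1)=r(n,\ell)$ ; joint aux divisibilités $r(n,\ell)\mid s(n,\ell)\mid q(n,\ell)$ déjà établies, ceci donne l'égalité $q(n,\ell)=r(n,\ell)$, c'est-à-dire le théorème~\ref{thmppcm}.
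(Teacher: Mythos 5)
Votre traitement de la première divisibilité est correct, par une voie différente de celle du texte : vous la ramenez, prime par prime, à l'inégalité $m_p(n,\ell)\le m_p(n,\ell-1)+v_p(\ell+n-1)$ et la prouvez par l'argument des chiffres $p-1$ terminaux (tous les $h_j$ sont $\equiv-1\pmod{p^t}$, donc $p^t\mid\ell+n-1$), là où le texte somme sur $j$ les relations d'intégralité $q(n,\ell)(i_j+1)\prod_h i_h!/(\ell+n-1)!\in\mathbf{N}$. La réduction en valuations est elle aussi légitime (on peut bien sortir le facteur entier $(\ell+n-1)!/\ell!$ du ppcm), et le dernier paragraphe (récurrence donnant $q\mid r$) est identique au texte.

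En revanche, la seconde divisibilité --- qui est le c\oe ur du lemme --- n'est pas démontrée. Après fusion de deux composantes vous devez majorer $v_p\bigl((i_{n-1}+i_n+1)!/(i_{n-1}!\,i_n!)\bigr)$ par $v_p\bigl(d(1+\ell/(n-1))\bigr)$, et c'est précisément ce que vous laissez en suspens (« le point technique central »). L'idée esquissée ne tient pas telle quelle : la somme des deux plus petites composantes d'un minimiseur peut approcher $2\ell/n$, qui dépasse $\ell/(n-1)$ dès que $n\ge3$, de sorte que l'identité de Williams appliquée à $d(i_{n-1}+i_n+1)$ ne donne pas la borne voulue. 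Exemple : $p=2$, $n=3$, $\ell=21$ ; l'unique maximiseur de $\sum_j s_2(i_j)$ est $(7,7,7)$, le bloc fusionné vaut $15>[1+\ell/2]=11$, et si la conclusion reste vraie ici ($3\le3$), c'est après un calcul de retenues que rien dans votre texte ne garantit en général ; « tout le reste est mécanique » est exactement l'affirmation qu'il faudrait prouver. La preuve du texte contourne cette difficulté autrement : elle écrit $i_1!\cdots i_n!/(\ell+n-1)!$ comme l'intégrale $i_1\int_{S_n}x_1^{i_1-1}x_2^{i_2}\cdots x_n^{i_n}\,\mathrm{d}x$, développe $(1-x_2-\cdots-x_n)^{i_1}$ à coefficients entiers, et n'a plus affaire qu'à des $(n-1)$-uplets d'exposants de somme $\le\ell$, dont la \emph{plus petite} composante $i_1'$ vérifie $i_1'\le\ell/(n-1)$ ; il suffit alors de multiplier par $i_1'+1$, qui divise $d(1+\ell/(n-1))$, et d'invoquer la première divisibilité pour $q(n-1,\cdot)$. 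C'est ce passage de « fusionner deux des $n$ composantes » à « isoler la plus petite de $n-1$ exposants » qui rend la borne en $\ell/(n-1)$ opérante et qui manque à votre approche.
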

\begin{proof}
Soit $(i_1,\ldots,i_n)\in\mathbf{N}^n$ tel que
$i_1+\cdots+i_n=\ell -1$. Par d\'efinition de $q(n,\ell)$ et
pour tout $j\in\{1,\ldots,n\}$, on a
$q(n,\ell)\frac{(i_{j}+1)\prod_{h=1}^n{i_{h}!}}{(\ell+n-1)!}\in\mathbf{N}$
ce qui, en sommant sur $j$, conduit
{\`a} $$q(n,\ell)\frac{\prod_{h=1}^n{i_{h}!}}{(\ell+n-2)!}\in\mathbf{N}.$$ Ceci
montre la premi\`ere assertion. Pour la seconde, nous transitons par
des int\'egrales, comme Nair\cite{nair}
dans le cas $n=2$. Consid\'erons le simplexe
$$S_n:=\{(x_1,\ldots,x_n)\in[0,1]^n;\ x_1+\cdots+x_n\le1\}.$$
Si $i_1+\cdots+i_n=\ell$ et
$i_1\ne0$ nous avons $$ I:=i_1\int_{S_n}x_1^{i_1-1}x_2^{i_2}\cdots
x_n^{i_n}\,\mathrm{d}x_1\ldots\mathrm{d}x_n=\frac{i_1!\cdots
i_n!}{(\ell+n-1)!}.$$ Pour le voir, effectuons le changement de
variables $x_j=ty_j$ ($j<n$) et
$x_n=1-t$ qui donne $$I=\left(i_1\int_{S_{n-1}}y_1^{i_1-1}y_2^{i_2}\cdots
y_{n-1}^{i_{n-1}}\,\mathrm{d}y_1\ldots
\mathrm{d}y_{n-1}\right)\int_0^1(1-t)^{i_n}t^{i_1+\cdots+i_{n-1}+n-2}\,\mathrm{d}t.$$
Par int\'egrations par parties successives l'int\'egrale sur $t$ vaut
$i_n!(\ell+n-i_n-2)!/(\ell+n-1)!$ et l'on conclut par
r\'ecurrence. D'autre part, nous avons
aussi \begin{equation*}\begin{split}I&=
\int_{S_{n-1}}(1-x_2-\cdots-x_n)^{i_1}x_2^{i_2}\cdots
x_n^{i_n}\,\mathrm{d}x_2\ldots\mathrm{d}x_n\\&=\sum_{\genfrac{}{}{0pt}{}
{j=(j_2,\ldots,j_n)}{j_2+\cdots+j_n\le i_1}}
\alpha_j\int_{S_{n-1}}x_2^{i_2+j_2}\cdots
x_n^{i_n+j_n}\,\mathrm{d}x_2\ldots\mathrm{d}x_n\end{split}\end{equation*}
avec $\alpha_j\in\mathbf Z$. Fixons une famille $j=(j_2,\ldots,j_n)$ et
notons $i_1',\ldots,i_{n-1}'$ la suite ordonn\'ee
des nombres $i_2+j_2,\ldots,i_n+j_n$. Nous avons
$i_1'+\cdots+i_{n-1}'=i_2+\cdots+i_n+j_2+\cdots+j_n\le
i_1+i_2+\ldots+i_n=\ell$ donc $i_1'\le\ell/(n-1)$. Ainsi $i_1'+1$
divise $d(1+\ell/(n-1))$. Maintenant le
nombre \begin{align*}(i_1'+1)\int_{S_{n-1}}x_2^{i_2+j_2}
\cdots x_n^{i_n+j_n}\,\mathrm{d}x_2\ldots\mathrm{d}x_n&=(i_1'+1)\int_{S_{n-1}}
x_1^{i_1'}\cdots x_{n-1}^{i_{n-1}'}\,\mathrm{d}x_1\ldots\mathrm{d}x_{n-1}\\ &=
\frac{(i_1'+1)!i_2'!\cdots i_{n-1}'!}{(i_1'+\cdots+i_{n-1}'+n-1)!}\end{align*}
appartient \`a $q(n-1,i_1'+\cdots i_{n-1}'+1)^{-1}\mathbf{Z}\subset
q(n-1,\ell+1)^{-1}\mathbf{Z}$. En combinant tout ceci nous avons
$i_1!\cdots i_n!/(\ell+n-1)!\in d(1+\ell/(n-1))^{-1}q(n-1,\ell+1)^{-1}
\mathbf{Z}$ qui donne le lemme.\end{proof}

\subsection{Cons\'equences}\label{subsecconse} Il est bien connu
que le th\'eor\`eme des nombres premiers entra{\^{\i}}ne une
estimation asymptotique de la \emph{fonction de Tchebychev de seconde
esp\`ece} $\psi(x):=\log\ppcm(1,2,\ldots,[x])$~:
$\psi(x)=x+\mathrm{o}(x)$ lorsque $x\to+\infty$. Ce r\'esultat
conduit {\`a} l'estimation
asymptotique \begin{equation*}p(n,\ell)=\exp{\left\{\ell
H_{n-1}+\mathrm{o}(\ell)\right\}},\quad
\ell\to+\infty\end{equation*}o{\`u}, comme plus haut, $H_{n-1}$
d\'esigne le nombre 
harmonique. Il peut {\^{e}}tre
int\'eressant d'avoir un encadrement effectif de $p(n,\ell)$. Par
exemple, en minorant $p(n,\ell)$ par le maximum des coefficients
multinomiaux, on a une minoration simple et asymptotiquement
pr\'ecise~:
$$ n^{\ell}\binom{\ell+n-1}{n-1}^{-1}\le p(n,\ell) $$ (on rappelle que
la somme des multinomiaux vaut $n^{\ell}$). En sens inverse, un
th\'eor\`eme de Rosser \& Schoenfeld~\cite[th{\'e}or{\`e}me~$12$]{rs}
implique $\psi(x)\le x\log(2\sqrt{2})$ pour tout nombre r\'eel $x\ge
0$, qui, {\`a} son tour, fournit la majoration suivante. 

\begin{prop}
Pour tous entiers $n,\ell\ge 1$ on a $\log p(n,\ell)\le \frac{3}{2}\ell\log n$.
\end{prop}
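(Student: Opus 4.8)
The plan is to prove the bound by splitting on the size of $\ell$ relative to $n$, using a crude divisibility bound when $\ell$ is small and Theorem~\ref{thmppcm} together with the Rosser--Schoenfeld estimate when $\ell$ is large. First, $p(n,\ell)$ is the lcm of a family of divisors of $\ell!$ --- each multinomial coefficient $\ell!/(i_1!\cdots i_n!)$ divides $\ell!$ --- so $p(n,\ell)\mid\ell!$ and hence $\log p(n,\ell)\le\log\ell!\le\ell\log\ell$. Consequently, whenever $\ell^{2}\le n^{3}$ we have $\log\ell\le\frac32\log n$, and therefore $\log p(n,\ell)\le\frac32\ell\log n$, as desired. (The case $n=1$, where $p(1,\ell)=1$, is trivial.)

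So assume $\ell^{2}>n^{3}$, whence $\ell+1>n^{3/2}$, i.e. $\log(\ell+1)>\frac32\log n$. By Theorem~\ref{thmppcm} and the definition $\psi(x)=\log\ppcm(1,\dots,[x])$ one has $\log p(n,\ell)=\sum_{j=1}^{n-1}\bigl(\psi((\ell+n-1)/j)-\log(\ell+j)\bigr)$. Applying $\psi(x)\le x\log(2\sqrt2)=\frac32 x\log 2$ to each of the $n-1$ terms, and using $\sum_{j=1}^{n-1}\log(\ell+j)\ge(n-1)\log(\ell+1)$, I obtain
$$\log p(n,\ell)\le\frac32(\log 2)(\ell+n-1)H_{n-1}-(n-1)\log(\ell+1)=\frac32(\log 2)\,\ell\,H_{n-1}+(n-1)\Bigl(\frac32(\log 2)H_{n-1}-\log(\ell+1)\Bigr).$$
The proof is then completed by the elementary inequality $H_{n-1}\le\log_2 n=\log n/\log 2$, which follows by summing $1/k\le\log_2(1+1/k)$ (equivalent to $(1+1/k)^{k}\ge 2$) over $k=1,\dots,n-1$: it gives $\frac32(\log 2)\,\ell\,H_{n-1}\le\frac32\ell\log n$, and also $\frac32(\log 2)H_{n-1}\le\frac32\log n<\log(\ell+1)$, so the bracketed summand above is $\le 0$. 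Hence $\log p(n,\ell)\le\frac32\ell\log n$. Since $\ell^{2}\le n^{3}$ and $\ell^{2}>n^{3}$ exhaust all cases, this finishes the argument.

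I do not anticipate a deep obstacle; the one point deserving attention is the way the two ranges dovetail. The Rosser--Schoenfeld argument already works as soon as $\ell+1>n^{3/2}$, a condition implied by $\ell^{2}>n^{3}$, so there is no tight borderline regime to analyse --- and the exponent $\frac32$ is precisely what lines up the comparison $\ell^{2}\le n^{3}\Leftrightarrow\log\ell\le\frac32\log n$ on the trivial side with $\frac32(\log 2)H_{n-1}\le\frac32\log n$ on the Rosser--Schoenfeld side. The two auxiliary facts invoked ($H_{n-1}\le\log_2 n$, and $\ell^{2}>n^{3}\Rightarrow\log(\ell+1)>\frac32\log n$) are routine to check.
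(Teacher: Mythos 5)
Your proof is correct and follows essentially the same route as the paper: the same case split at $\ell\le n^{3/2}$, the same crude bound $p(n,\ell)\le\ell!\le\ell^{\ell}$ in the small range, and in the large range Theorem~\ref{thmppcm} combined with the Rosser--Schoenfeld bound $\psi(x)\le x\log(2\sqrt{2})$, reduced to the key inequality $H_{n-1}\log(2\sqrt{2})\le\frac{3}{2}\log n$ together with the absorption of the $(n-1)$-term via $\ell\ge n^{3/2}$. Your telescoping verification of $H_{n-1}\le\log n/\log 2$ (from $(1+1/k)^{k}\ge 2$) is a slightly cleaner substitute for the paper's function study plus finite check for $n\le 6$, but otherwise the argument coincides with the paper's.
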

\begin{proof}L'{\'e}nonc{\'e} \'etant trivial si $n$ ou $\ell$ {\'e}gale
$1$, on peut supposer $\ell,n\ge 2$. Si $\ell\le n^{3/2}$ on
utilise la borne $p(n,\ell)\le\ell !$ qui implique
$p(n,\ell)\le\ell^{\ell}\le (n^{3/2})^{\ell}$. Lorsque
$\ell>n^{3/2}$ on utilise le th{\'e}or{\`e}me~\ref{thmppcm} et la
majoration de $\psi$ ci-dessus qui permettent de majorer $\log
p(n,\ell)$ par
$(\ell+n-1)H_{n-1}\log(2\sqrt{2})-\log\left((\ell+1)\cdots(\ell+n-1)\right)$.
Par une \'etude de fonction, on a
$H_{n-1}\log(2\sqrt2)\le(\log(n-1)+1)\log(2\sqrt{2})\le 
\frac{3}{2}\log n$ d{\`e}s que $n\ge 7$. La majoration
$H_{n-1}\log(2\sqrt{2})\le \frac{3}{2}\log n$ 
reste vraie pour $n\le6$ par v\'erification directe. En utilisant
cette borne on trouve
\begin{equation*}(\ell+n-1)H_{n-1}\log(2\sqrt2)\le\frac{3}{2}\ell\log
n+\frac{3}{2}(n-1)\log n\le\frac{3}{2}\ell\log
n+\log\ell^{n-1}\end{equation*}car $n^{3/2}\le\ell$ et l'on
conclut en majorant $\ell^{n-1}$ par le produit des $\ell+i$,
$i\in\{1,\ldots,n-1\}$.\end{proof}

En utilisant $\binom{\ell+n-1}{n-1}\le n^{\ell}$ et
$H_{n}-1\le\log n$, cette borne et le th{\'e}or{\`e}me~\ref{pentesym}
donnent les estimations suivantes. 
\begin{prop}Soient $\overline{E}$ un fibr\'e
ad\'elique hermitien sur $k$, de dimension $n\ge 1$, et $\ell$ un
entier $\ge 1$.
On a \begin{equation*}\widehat{\mu}_{\mathrm{max}}\left(\overline
{S^{\ell}(E)}\right)\le\ell\widehat{\mu}_{\mathrm{max}}(\overline{E})+2\ell\log n
\quad\text{et}\quad\Lambda(
\overline{E},\overline{\mathbf{Q}})^{\ell}\le n^{2\ell}
\Lambda\left(\overline{S^{\ell}(E)},\overline{\mathbf{Q}}\right).\end{equation*}
\end{prop}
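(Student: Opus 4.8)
The plan is to read off both inequalities directly from Theorem~\ref{pentesym} after replacing the two quantities $\binom{\ell+n-1}{n-1}$ and $p(n,\ell)$ by crude powers of $n$; no new idea is needed, since all the substance already sits in Theorem~\ref{pentesym} and in the bound $\log p(n,\ell)\le\tfrac32\ell\log n$ proved just above.

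For the maximal slope, Theorem~\ref{pentesym} gives
$$\widehat{\mu}_{\mathrm{max}}\left(\overline{S^{\ell}(E)}\right)-\ell\widehat{\mu}_{\mathrm{max}}(\overline{E})\le\frac12\log\binom{\ell+n-1}{n-1}+\log p(n,\ell).$$
Since the $\binom{\ell+n-1}{n-1}$ multinomial coefficients of order $\ell$ in $n$ variables are each at least $1$ and sum to $n^{\ell}$, one has $\binom{\ell+n-1}{n-1}\le n^{\ell}$, hence $\tfrac12\log\binom{\ell+n-1}{n-1}\le\tfrac{\ell}{2}\log n$; adding $\log p(n,\ell)\le\tfrac32\ell\log n$ yields the bound $2\ell\log n$, which is the first claim.

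For the first minimum, Theorem~\ref{pentesym} gives
$$1\le\Lambda\left(\overline{S^{\ell}(E)},\overline{\mathbf{Q}}\right)^{-1}\Lambda(\overline{E},\overline{\mathbf{Q}})^{\ell}\le p(n,\ell)\exp\left(\frac{\ell-1}{2}(H_n-1)\right).$$
The elementary estimate $H_n-1\le\log n$ (compare the harmonic sum with $\int_1^n\mathrm{d}x/x$) gives $\exp\bigl(\tfrac{\ell-1}{2}(H_n-1)\bigr)\le n^{(\ell-1)/2}\le n^{\ell/2}$, while $\log p(n,\ell)\le\tfrac32\ell\log n$ gives $p(n,\ell)\le n^{3\ell/2}$; multiplying, the right-hand side is at most $n^{3\ell/2}\cdot n^{\ell/2}=n^{2\ell}$, and rearranging produces $\Lambda(\overline{E},\overline{\mathbf{Q}})^{\ell}\le n^{2\ell}\Lambda\left(\overline{S^{\ell}(E)},\overline{\mathbf{Q}}\right)$.

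There is no real obstacle here: the only point worth stressing is that the constant $2$ is far from optimal — Proposition~\ref{propquatredeux} shows the true growth of $\Delta(n,\ell)$ is $\tfrac{\ell}{2}\log n+\mathrm{o}(\ell)$ — so any improvement would have to come from sharpening Theorem~\ref{thmppcm} together with the $\psi$-estimate, or Theorem~\ref{pentesym} itself, rather than from the trivial manipulations above.
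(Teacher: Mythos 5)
Your proof is correct and is exactly the paper's argument: the authors also deduce the proposition from Theorem~\ref{pentesym} by combining $\binom{\ell+n-1}{n-1}\le n^{\ell}$, $H_n-1\le\log n$ and the bound $\log p(n,\ell)\le\tfrac32\ell\log n$ established just before. Nothing is missing.
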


\bibliographystyle{plain}

\end{document}